\documentclass[11pt,letterpaper]{amsart}

\usepackage{epsfig,amsmath,amsthm,amssymb,graphicx,hyperref}
\usepackage[top=1.25in, bottom=1.25in, left=1.25in, right=1.25in]{geometry}
\usepackage{caption}
\usepackage{rotating}

\newtheorem{proposition}{Proposition}[section]
\newtheorem{theorem}[proposition]{Theorem}

\newtheorem*{theorem*}{Theorem}

\newtheorem*{thm_4.3}{Theorem 4.3}
\newtheorem*{thm_3.1}{Theorem 3.1}

\newtheorem*{cor_4.5}{Corollary 4.5}
\newtheorem{corollary}[proposition]{Corollary}
\newtheorem{lemma}[proposition]{Lemma}
\theoremstyle{definition}
\newtheorem{definition}[proposition]{Definition}
\newtheorem{remark}[proposition]{Remark}

\DeclareMathOperator{\spn}{span}
\DeclareMathOperator{\lk}{lk}
\DeclareMathOperator{\rk}{rank}

\frenchspacing
\setlength{\unitlength}{1in}

\begin{document}
\title[Milnor's triple linking numbers and derivatives of genus three knots]{Milnor's triple linking number and derivatives of genus three knots}

\author{JungHwan Park}
\address{Department of Mathematics, Rice University MS-136\\
6100 Main St. P.O. Box 1892\\
Houston, TX 77251-1892}
\email{jp35@rice.edu}

\date{\today}

\subjclass[2000]{57M25}

\begin{abstract}
A derivative of an algebraically slice knot $K$ is an oriented link disjointly embedded in a Seifert surface of $K$ such that its homology class forms a basis for a metabolizer $H$ of $K$. We show that for a genus three algebraically slice knot $K$, the set $\{ \bar{\mu}_{\{\gamma_1,\gamma_2,\gamma_3\}}(123) - \bar{\mu}_{\{\gamma'_1,\gamma'_2,\gamma'_3\}}(123)| \{\gamma_1,\gamma_2,\gamma_3\}$ and $\{\gamma'_1,\gamma'_2,\gamma'_3\}$ are derivatives of $K$ associated with a metabolizer $H\}$ contains $n\cdot \mathbb{Z}$ where $n$ is determined by a Seifert form of $K$ and a metabolizer $H$. As a corollary, we show that it is possible to realize any integer as a Milnor's triple linking number of a derivative of the unknot on a fixed Seifert surface with a fixed metabolizer. In addition, we show that a knot, which is a connected sum of three genus one algebraically slice knots, has at least one derivative which has non-zero Milnor's triple linking number.
\end{abstract}

\maketitle

\section{Introduction}\label{Introduction}
A knot $K$ is a smooth embedding of an oriented $S^1$ into $S^3$. If $K$ bounds a smoothly embedded $2$-disk $D^2$ in the standard $B^4$, we say $K$ is a smoothly slice knot. Recall that any knot in $S^3$ bounds a Seifert surface $F$ and from $F$ we have a Seifert form $\beta_F : H_1(F) \times H_1(F) \rightarrow \mathbb{Z}$. Here $\beta_F([x],[y]) = \lk(x,y^+)$, x is union of simple closed curves on $F$ that represents $[x]$, and $y^+$ is positive push off of union of simple closed curves on $F$ that represents $[y]$. If $K$ is a smoothly slice knot there exists a metaboliser $H \cong \mathbb{Z}^{\frac{1}{2} \rk H_1(F)}$, a direct summand of $H_1(F)$, such that $\beta_F$ vanishes on $H$ (see \cite[Lemma2]{Le69}). In this case we say that the Seifert form is metabolic, and we call a knot algebraically slice if it has a metabolic Seifert form. From now we will assume $K$ is algebraically slice and that $H$ is a metaboliser of $K$. Let $\{ \gamma_1, \gamma_2, \cdots, \gamma_{\frac{1}{2} \rk H_1(F)} \}$ be an oriented link disjointly embedded in the Seifert surface $F$ whose homology class forms a basis for $H$. Such a link is called a derivative of $K$ associated with $H$.

It is an interesting problem to understand the behavior of derivatives of a given algebraically slice knot. A lot of work has been done in this area (see \cite{Gi83,Li84,Gi93,GL92,GL92',GL13,CHL10,COT04,CD14,Park15}). However, not until recently has there been a study of the Milnor's invariants of derivatives (for the definition of Milnor invariants see section $2.2$). In \cite{JKP14}, Jang-Kim-Powell studied Milnor invariants of derivatives of a boundary links where each of the components has a Seifert surface of genus one. In this paper we will be focusing on a knot which has a genus three Seifert surface and trying to understand the behavior of its derivatives. In particular, we are interested in calculating their Milnor's triple linking number. It turns out that the behavior of Milnor's triple linking number of derivatives is quite complex as suggested by the following main theorem.

\begin{thm_4.3} Let $K$ be an algebraically slice knot with genus three Seifert surface $F$. Suppose $H$ is a metabolizer of $K$ and $\{ a_1, a_2, a_3, b_1, b_2, b_3 \}$ is a symplectic basis for $H_1(F)$ where $\spn(b_1,b_2,b_3) = H$. Let $$M =
 \begin{pmatrix}
  * & a & * & x_1 & * & y_1 \\
  a-1 & 0 & x_2 & 0 & y_2 & 0 \\
  * & x_2 & * & b & * & z_1 \\
  x_1 & 0 & b-1 & 0 & z_2 & 0 \\
  * & y_2 & * & z_2 & * & c \\
  y_1 & 0 & z_1 & 0 & c-1 & 0
 \end{pmatrix}$$ be the Seifert matrix with respect to the basis $\{ a_1, b_1, a_2, b_2, a_3, b_3 \}$. Then $S_{K,H} \supseteq \{n\cdot((a-1)(b-1)(c-1)-abc+x_1x_2+y_1y_2+z_1z_2)|n\in \mathbb{Z} \}$, where $S_{K,H} =\{ \bar{\mu}_{\{\gamma_1,\gamma_2,\gamma_3\}}(123) - \bar{\mu}_{\{\gamma'_1,\gamma'_2,\gamma'_3\}}(123)| \{\gamma_1,\gamma_2,\gamma_3\}$ and $\{\gamma'_1,\gamma'_2,\gamma'_3\}$ are derivatives of $K$ associated with $H\}$.
\end{thm_4.3}

It is still an open problem whether $S_{K,H}$ is in fact equal to $\{n\cdot((a-1)(b-1)(c-1)-abc+x_1x_2+y_1y_2+z_1z_2)|n\in \mathbb{Z} \}$. As a corollary of the main Theorem we show that even when we choose the knot to be the simplest possible knot, Milnor's triple linking number of derivatives can become very complicated.

\begin{cor_4.5} Let $U$ be the unknot with a Seifert surface $F$ given in Figure $1$. For $i=1,2,3$, let $\beta_i$ be the simple closed curve as in Figure $1$. If $H := \spn([\beta_1], [\beta_2], [\beta_3])$, then 
$$ \{ \bar{\mu}_{\{\gamma_1,\gamma_2,\gamma_3\}}(123) | \{\gamma_1,\gamma_2,\gamma_3\} \text{ is a derivative of U associated with H} \} = \mathbb{Z}$$
\end{cor_4.5}
\begin{figure}[h]
\centering
\includegraphics[width=6in]{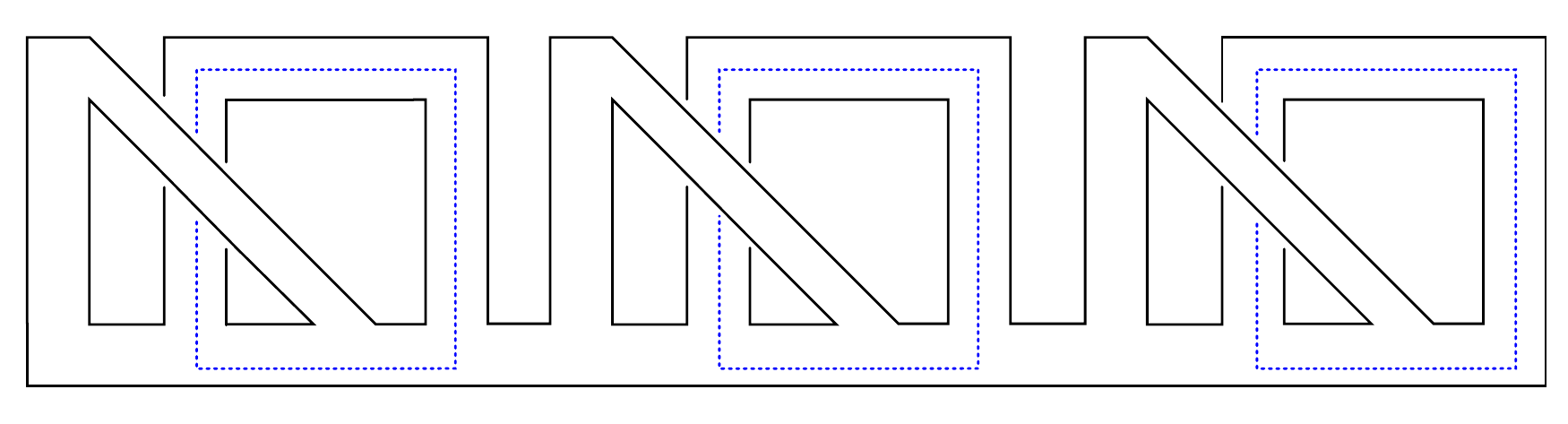}
\put(-4.8,-0){$\beta_1$}
\put(-2.8,-0){$\beta_2$}
\put(-0.8,-0){$\beta_3$}

\caption{Disk-band form of Seifert surface $F$ for the unknot $U$}
\end{figure}

In addition, we also show that a knot, which is a connected sum of three genus one algebraically slice knots, has at least one derivative which has non-zero Milnor's triple linking number.

\section{Preliminaries}\label{Preliminaries}

\subsection{Band form and Seifert matrix}
Fix a knot $K$ and its genus three Seifert surface $F$, then we have a Seifert form $\beta_F : H_1(F) \times H_1(F) \rightarrow \mathbb{Z}$. Since $K$ is an algebraically slice knot there exists a metaboliser $H=\mathbb{Z}^{\frac{1}{2} \rk H_1(F)} = \mathbb{Z}^3$, a direct summand of $H_1(F)$, such that $\beta_F$ vanishes on $H$. Let $\{ \gamma_1, \gamma_2, \cdots, \gamma_{\frac{1}{2} \rk H_1(F)} \} = \{ \gamma_1, \gamma_2, \gamma_3 \}$ be a derivative of $K$ associated with $H$. Let $b_1=[\gamma_1]$, $b_2=[\gamma_2]$, and $b_3=[\gamma_3]$ be elements in $H_1(F)$, then we can extend $\{b_1, b_2, b_3 \}$ to a symplectic basis $\{ a_1, a_2, a_3, b_1, b_2, b_3 \} $ where $a_i$ is an intersection dual of $b_i$ for each $i \in \{1,2,3\}$. This gives us a disk band form for the Seifert surface $F$ as in Figure $2$. From this we get a Seifert Matrix $$M =
 \begin{pmatrix}
  * & a & * & x_1 & * & y_1 \\
  a-1 & 0 & x_2 & 0 & y_2 & 0 \\
  * & x_2 & * & b & * & z_1 \\
  x_1 & 0 & b-1 & 0 & z_2 & 0 \\
  * & y_2 & * & z_2 & * & c \\
  y_1 & 0 & z_1 & 0 & c-1 & 0
 \end{pmatrix}$$ for $K$ with respect to a basis $\{ a_1, b_1, a_2, b_2, a_3, b_3 \}$, where $a,b,c,x_1,x_2,y_1,y_2,z_1$ and $z_2$ are integers.
 
\begin{figure}[h]
\centering
\includegraphics[width=6in]{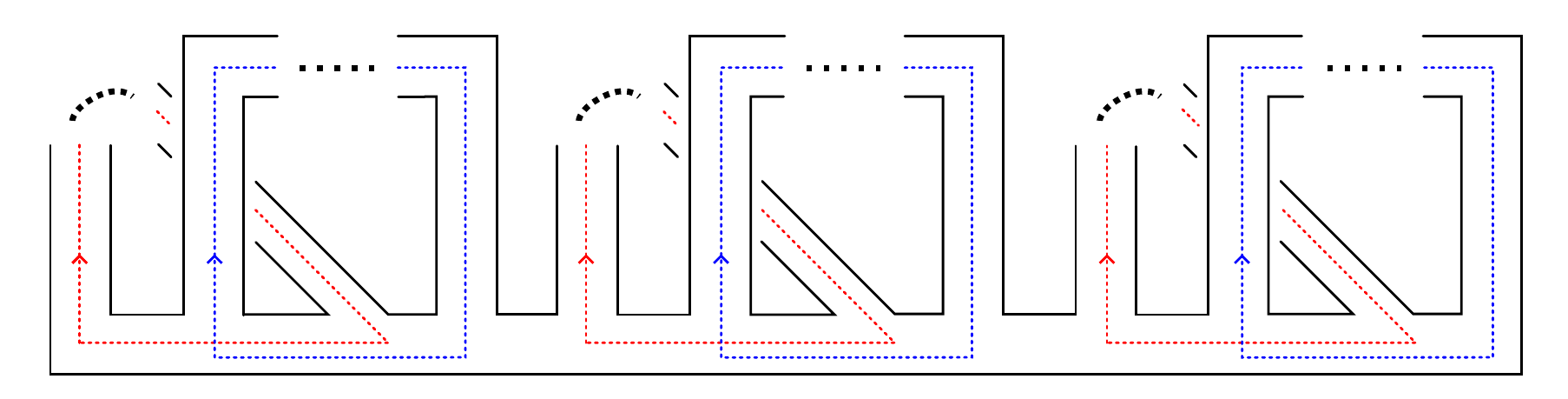}
\put(-5.5,-0.1){$a_1$}
\put(-4.8,-0.1){$b_1$}
\put(-3.5,-0.1){$a_2$}
\put(-2.8,-0.1){$b_2$}
\put(-1.5,-0.1){$a_3$}
\put(-0.8,-0.1){$b_3$}

\caption{Disk-band form of Seifert surface $F$ for $K$}
\end{figure}

\subsection{Milnor's Triple Linking Number}
In this section, we will recall some definitions and properties of Milnor's triple linking number (for more general setting and detailed discussion see \cite{Mi54}, \cite{Mi57}, \cite{Or89}). We will focus on $3$-component link $L = \{ L_1, L_2, L_3 \}$ such that $\lk(L_i,L_j)=0$ for $i,j \in \{1,2,3 \}$ where $i \neq j$. Let $\mu_1, \mu_2$ and $\mu_3$ be meridians of $L_1, L_2$ and $L_3$ respectively and let $\lambda_1, \lambda_2$ and $\lambda_3$ be longitudes of $L_1, L_2$ and $L_3$ respectively. Let $\pi = \pi_1(S-L)$ and $F$ be a free group generated by $x_1, x_2$ and $x_3$. Also, when $G$ is a group, the $n$th lower central series of $G$ is denoted as $G_n$ and defined inductively by $G_1=G$ and $G_i=[G,G_{i-1}]$ for $i > 1$. Note that, since $\lk(L_i,L_j)=0$ for $i,j \in \{1,2,3 \}$ where $i \neq j$, $[\lambda_i] \in \pi_2$ for all $i \in \{1,2,3\}$. Hence, by \cite{Mi57} we have an isomorphism $F / F_3 \rightarrow \pi / \pi_3$ which is induced by $\overset{3}\vee S^1 \rightarrow S^3 - L$ where each of the circle from the wedge gets map to each of the meridian of the link $L$, $i.e.$ $x_1$ is identified with $[\mu_1]$, $x_2$ is identified with $[\mu_2]$, and $x_3$ is identified with $[\mu_3]$. So, from now on we will identify $F / F_3$ and $\pi / \pi_3$.

Recall Magnus representation $\phi : F\rightarrow P$ where $P$ is power series ring with non-commutative variable $a_1,a_2$ and $a_3$, and $\phi(x_i)=1+a_i$ and $\phi(x^{-1}_i)=1-a_i+a_i^2-a_i^3 + \cdots$ for $i \in \{1,2,3 \}$. Magnus proved that for any $y\in F$, $y \in F_k$ if and only if $\phi(y) = 1+ \sum c_i\cdot w_i$ where $c_i$ is an integer, $w_i$ is a word in $a_1, a_2$ and $a_3$, and length of $w_i$ is greater than equal to $k$ for all $i$ (see \cite{Ma04}).

We define Milnor's triple linking number $\bar{\mu}_{L}(123)$ of a link $L$ as the coefficient of $a_1\cdot a_2$ in $\phi([\lambda_3]) \in P$. Notice that by Magnus's work, $[\lambda_3] \in \pi_3$ if and only if $\bar{\mu}_{L}(123)=0$. It is also known that $F_2 / F_3 \simeq \pi_2 / \pi_3$ is a finitely generated free abelian group with a basis $\{[x_1,x_2], [x_1,x_3],[x_2, x_3]\}$ (see \cite{Ha50}). Then it is not hard to see that $\bar{\mu}_{L}(123)$ is equal to $n_1$ where $p([\lambda_3])=[x_1,x_2]^{n_1} \cdot [x_1,x_3]^{n_2} \cdot [x_2, x_3]^{n_{3}} \in F_2/F_3 = \pi_2 / \pi_3$ where $p$ is a projection $p : \pi_2 \rightarrow \pi_2/\pi_3$.

We will recall some properties of lower central series, which helps to compute $\bar{\mu}_{L}(123)$.

\begin{proposition}[See \cite{Ma04}]  Let $a,b,c$ be elements of a group $G$. Then
\begin{enumerate}
\item $[G_n,G_m] \subseteq G_{n+m}$ for any positive integers $n$ and $m$
\item $[a,b\cdot c] = [a,b]\cdot [a,c] \pmod{G_3}$
\item $[a\cdot b,c] = [a,c]\cdot [b,c] \pmod{G_3}$
\end{enumerate}
\end{proposition}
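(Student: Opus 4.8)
The plan is to treat this as the standard package of low-degree commutator identities, establishing (1) first and then deriving (2) and (3) from it together with one elementary rewriting.

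For (1) I would induct on $n$, leaving $m$ arbitrary. The base case $n=1$ is immediate from the defining recursion: $[G_1,G_m]=[G,G_m]=G_{m+1}$. For the inductive step, write $G_n=[G_{n-1},G]$, so that $[G_n,G_m]=[[G_{n-1},G],G_m]$, and apply P.\ Hall's three-subgroups lemma; since $G_{n-1}$, $G$, $G_m$ and $G_{n+m}$ are all normal in $G$, this gives
$$[[G_{n-1},G],G_m]\subseteq[[G,G_m],G_{n-1}]\cdot[[G_m,G_{n-1}],G].$$
Now $[G,G_m]=G_{m+1}$, so the first factor is $[G_{m+1},G_{n-1}]\subseteq G_{m+n}$ by the inductive hypothesis, and the second factor is $[[G_{n-1},G_m],G]\subseteq[G_{m+n-1},G]=G_{m+n}$, again by the inductive hypothesis together with the defining recursion. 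Since $G_{m+n}$ is normal, the subgroup generated by all the relevant commutators lies in $G_{m+n}$, which is (1).

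For (2) and (3) I would combine (1) with the elementary identities $[a,bc]=[a,b]\cdot{}^{b}[a,c]$ and $[ab,c]={}^{a}[b,c]\cdot[a,c]$, each verified by expanding both sides. The correction term ${}^{b}[a,c]\cdot[a,c]^{-1}$ is a commutator of $b$ with $[a,c]\in G_2$, hence lies in $[G,G_2]=G_3$; therefore ${}^{b}[a,c]\equiv[a,c]\pmod{G_3}$ and $[a,bc]\equiv[a,b][a,c]\pmod{G_3}$, and the argument for $[ab,c]$ is symmetric.

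There is essentially no obstacle: part (1) is the only step with any content, and that content is exactly the three-subgroups lemma, which is standard. The only points requiring a little care are normality — so that bounding the individual commutators bounds the subgroup they generate in (1) — and checking that the conjugation error terms in (2) and (3) genuinely land in $G_3$, which they do because a commutator of a $G_2$-element with a $G$-element lies in $G_3=[G,G_2]$ by definition.
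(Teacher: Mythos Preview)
Your argument is correct and is the standard route to these facts: part (1) via induction and the three-subgroups lemma, and parts (2)--(3) via the elementary conjugation identities together with (1). One small remark on presentation: the three-subgroups lemma is usually stated as ``if two of $[[A,B],C]$, $[[B,C],A]$, $[[C,A],B]$ lie in a normal subgroup $N$, then so does the third,'' rather than as a literal product inclusion; your conclusion is unaffected, but phrasing it that way with $N=G_{n+m}$ makes the logic cleaner.

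As for comparison with the paper: there is nothing to compare, since the paper does not prove this proposition at all. It is stated with a citation to \cite{Ma04} and used as a black box. Your write-up would serve perfectly well as the omitted proof.
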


From Proposition $2.1(1)$ we have the following proposition.

\begin{proposition}[See \cite{Co90}] Let $\gamma$ be a loop in $S^3 - L$ and let $\gamma_1$ and $\gamma_2$ be basings of $\gamma$. If either  $[\gamma_1]$ or $[\gamma_2]$ is in $\pi_2$, then $[\gamma_1]=[\gamma_2]$ in $\pi_2 / \pi_3 = F_2 / F_3$.
\end{proposition}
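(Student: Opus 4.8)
The plan is to reduce the statement to the fact that two basings of a loop differ by conjugation by an element of $\pi = \pi_1(S^3 - L)$, combined with part (1) of Proposition~$2.1$. First I would recall what a basing is: given the loop $\gamma \subseteq S^3 - L$, pick a point $q\in\gamma$ together with a path $\delta$ in $S^3 - L$ from the basepoint to $q$, and let $[\gamma]\in\pi$ be the class of $\delta\cdot\gamma\cdot\delta^{-1}$. If $\delta_1$ and $\delta_2$ are two such choices, producing classes $[\gamma_1]$ and $[\gamma_2]$, then a direct rewriting gives $[\gamma_2] = g\,[\gamma_1]\,g^{-1}$, where $g = [\delta_2\cdot\delta_1^{-1}]\in\pi$. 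In particular, since $\pi_2$ is a normal subgroup of $\pi$, the hypothesis that one of $[\gamma_1]$, $[\gamma_2]$ lies in $\pi_2$ forces both to lie in $\pi_2$.

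It then remains only to check that $[\gamma_1]$ and $[\gamma_2]$ agree modulo $\pi_3$. Writing $a = [\gamma_1]\in\pi_2$ and $[\gamma_2] = gag^{-1}$ with $g\in\pi = \pi_1$, I would compute
\[
[\gamma_2]\cdot[\gamma_1]^{-1} \;=\; g\,a\,g^{-1}\,a^{-1} \;=\; [g,a] \;\in\; [\pi_1,\pi_2].
\]
By Proposition~$2.1(1)$ applied with $G = \pi$, $n=1$, $m=2$, we have $[\pi_1,\pi_2]\subseteq\pi_3$ (indeed $[\pi_1,\pi_2]=\pi_3$ by the very definition of the lower central series), so $[\gamma_2]\cdot[\gamma_1]^{-1}\in\pi_3$. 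Equivalently $[\gamma_1] = [\gamma_2]$ in $\pi_2/\pi_3$, which under the isomorphism $\pi/\pi_3\cong F/F_3$ recalled above is the asserted equality in $F_2/F_3$.

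I do not expect a genuine obstacle here; the content of the argument is entirely carried by Proposition~$2.1(1)$. The only mild point to be careful about is the bookkeeping of conventions --- which direction the conjugating element $g$ acts, and checking that the basepoint-change loop $\delta_2\cdot\delta_1^{-1}$ genuinely represents an element of $\pi_1$ --- but neither affects the outcome, since $\pi_2$ is normal and $[\pi_1,\pi_2]\subseteq\pi_3$ in any case. Alternatively, one can phrase the whole argument inside $\pi/\pi_3$: the subgroup $\pi_2/\pi_3$ is central in $\pi/\pi_3$, so conjugation by any element of $\pi$ acts trivially on it, which is once more just Proposition~$2.1(1)$.
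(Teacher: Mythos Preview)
Your argument is correct and is essentially identical to the paper's proof: both observe that two basings differ by conjugation, write the quotient as a commutator $[g,\gamma_i]$, and invoke Proposition~2.1(1) to place it in $\pi_3$. Your version is just more verbose, adding the (harmless) remarks about normality of $\pi_2$ and centrality of $\pi_2/\pi_3$ in $\pi/\pi_3$.
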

\begin{proof} For some $g \in \pi$ we have $\gamma_1 = g \cdot \gamma_2 \cdot g^{-1} = g \cdot \gamma_2 \cdot g^{-1} \cdot \gamma_2^{-1} \cdot \gamma_2 = [g,\gamma_2] \cdot \gamma_2$. We can conclude the statement since $[g,\gamma_2]\in \pi_3$  by Proposition $2.1(1)$.
\end{proof}

This tells us that as long as any basing of a loop $\gamma$ in $S^3 - L$ is contained in $\pi_2$ we do not need to specify the basing. Using Proposition $2.1(2),(3)$ we have the following proposition immediately.

\begin{proposition} Let $w_1, w_2$ be elements of $F$ free group generated by $x_1,x_2$ and $x_3$. For $i\in \{ 1,2,3 \}$, let $n_i$ denote the exponent sum of $x_i$'s occurring in $w_1$, and let $m_i$ denote the exponent sum of $x_i$'s occurring in $w_2$. Then $[w_1,w_2] = [x_1, x_2]^{(n_1m_2-n_2m_1)} \cdot [x_1,x_3]^{(n_1m_3-n_3m_1)} \cdot [x_2,x_3]^{(n_2m_3-n_3m_2)}$ in $F_2 / F_3$.
\end{proposition}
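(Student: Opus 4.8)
The plan is to show that the map $F \times F \to F_2/F_3$ sending $(u,v) \mapsto [u,v] \bmod F_3$ is biadditive and depends only on the images of $u$ and $v$ in the abelianization $F/F_2 \cong \mathbb{Z}^3$. Once that is in hand, the claimed formula falls out of a short bookkeeping computation, using the cited fact (from \cite{Ha50}) that $F_2/F_3$ is free abelian on $[x_1,x_2],[x_1,x_3],[x_2,x_3]$.

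First I would record the elementary consequences of Proposition $2.1$. Proposition $2.1(2),(3)$ say exactly that $[u,\,\cdot\,]$ and $[\,\cdot\,,v]$ are additive modulo $F_3$, so by induction $[u^k,v] \equiv [u,v]^k$ and $[u,v^k] \equiv [u,v]^k \pmod{F_3}$ for every nonnegative integer $k$; to extend to negative exponents I would use that $[u^{-1},v]$ is a conjugate of $[u,v]^{-1}$, together with Proposition $2.1(1)$ in the form $[F_2,F] \subseteq F_3$, which shows that conjugation acts trivially on $F_2/F_3$. In particular $F_2/F_3$ is central in $F/F_3$, hence abelian, and moreover $u \in F_2$ forces $[u,v] \in F_3$; the latter means that $[u,v] \bmod F_3$ is unchanged when $u$ (or $v$) is multiplied by an element of $F_2$, which combined with biadditivity is precisely the statement that the map factors through $F/F_2 \times F/F_2$.

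Next, since by definition of the exponent sums $w_1 \equiv x_1^{n_1} x_2^{n_2} x_3^{n_3}$ and $w_2 \equiv x_1^{m_1} x_2^{m_2} x_3^{m_3}$ in $F/F_2$, the previous step gives
$$[w_1,w_2] \equiv [x_1^{n_1} x_2^{n_2} x_3^{n_3},\, x_1^{m_1} x_2^{m_2} x_3^{m_3}] \equiv \prod_{i,j=1}^{3} [x_i,x_j]^{n_i m_j} \pmod{F_3},$$
where the order of the product is irrelevant because $F_2/F_3$ is abelian. Finally I would collect terms using $[x_i,x_i] = 1$ and $[x_j,x_i] \equiv [x_i,x_j]^{-1} \pmod{F_3}$: the $(1,2)$ and $(2,1)$ factors combine to $[x_1,x_2]^{n_1 m_2 - n_2 m_1}$, and similarly for the pairs $\{1,3\}$ and $\{2,3\}$, which yields the stated identity.

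The only real subtlety — and the step I expect to require the most care — is the extension to negative exponents, and the closely related point that $[u,v] \bmod F_3$ is genuinely insensitive to modifying $u$ or $v$ by an element of $F_2$; both reduce to Proposition $2.1(1)$. Everything after that is the routine multilinear bookkeeping of reading off the three coefficients.
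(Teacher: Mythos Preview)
Your argument is correct and is precisely the approach the paper takes: the paper's proof consists of the single sentence ``By Proposition $2.1(2),(3)$ we can expand out $[w_1,w_2]$. Then the statement follows immediately,'' and you have simply spelled out that expansion in detail, including the care with negative exponents via Proposition $2.1(1)$.
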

\begin{proof}
By Proposition $2.1(2),(3)$ we can expand out $[w_1,w_2]$. Then the statement follows immediately.
\end{proof}

There is a nice geometric interpretation of Milnor' triple linking number of a link $L$ introduced by Cochran in \cite{Co90}. Given $L$ as above, suppose $F_1, F_2$ and $F_3$ are Seifert surfaces for $L_1, L_2$ and $L_3$ respectively. Further, for $i, j\in \{ 1,2,3 \}$ assume $F_i \cap L_j = \phi$ when $i\neq j$ and adjust so that triple intersection points are isolated. Then Milnor's triple linking number $\bar{\mu}_{L}(123)$ is the number of triple intersection points of $F_1, F_2$ and $F_3$, counted with signs. The sign of intersection $p\in F_1 \cap F_2 \cap F_3$ is positive if and only if $\{ \overrightarrow{v_1}, \overrightarrow{v_2}, \overrightarrow{v_3} \}$ agrees with standard orientation of $S^3$ where $\overrightarrow{v_i}$  is a normal vector to $F_i$ at p, for $i\in \{ 1,2,3 \}$. This tells us that $\bar{\mu}_{L}(\sigma(123)) = sign(\sigma)\cdot \bar{\mu}_{L}(123)$ for $\sigma \in S_3$, and change of an orientation of a component of $L$ changes the sign of Milnor' triple linking number.

\subsection{Infection by String Links}
In this section we will recall definitions of string link and string link infection. The following definitions are from \cite{JKP14}.

\begin{definition} (String Link)
\begin{enumerate}

\item An $r$-multi-disk $\mathbb{E}$ is an oriented disk $D^2$ with $r$ ordered embedded open disks $E_1, E_2, \cdots, E_r$. We have pairwise disjoint paths $\gamma_1, \gamma_2, \cdots, \gamma_r$ such that $\gamma_i(0) \in \partial E_i$ and $\gamma_i(1) \in \partial \mathbb{E}$ (see Figure $3$).

\begin{figure}[h]
\centering
\includegraphics[width=1.8in]{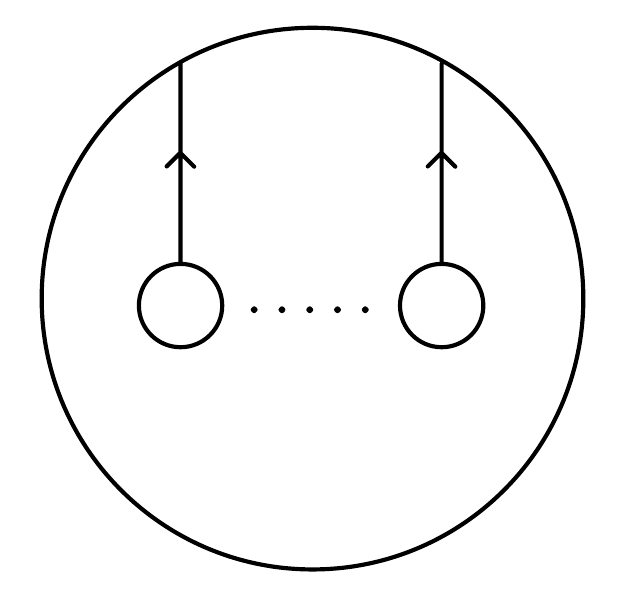}
\put(-0.9,-0.1){$\mathbb{E}$}
\put(-1.35,0.58){$E_1$}
\put(-0.61,0.58){$E_r$}
\put(-1.2,1.2){$\gamma_1$}
\put(-0.46,1.2){$\gamma_r$}

\caption{An $r$-multi-disk $\mathbb{E}$}
\end{figure}

\item Let $p_i$ be the fixed point in the interior of $E_i$ for each $i=1,2, \cdots, r$. An $r$-component string link is a smooth proper embedding $J : \bigsqcup\limits_{i=1}^r p_i \times I \rightarrow D^2 \times I$ such that $J(p_i \times \{ 0 \} ) = p_i \times \{ 0 \} \in D^2 \times \{ 0 \}$ and $J(p_i \times \{ 1 \} ) = p_i \times \{ 1 \} \in D^2 \times \{ 1 \}$ for each $i = 1, 2, \cdots, r$. We will denote $\nu(J) : \bigsqcup\limits_{i=1}^r p_i \times I \rightarrow D^2 \times I$ as a tubular neighborhood of $J$. We will abuse notation and use $J$ and $\nu(J)$ as image of $J$ and image of $\nu(J)$ respectively.

\item For $i = 1, 2, \cdots, r$, the meridian of a $i$th component of $J$ is the simple closed curve, up to ambient isotopy, on the boundary of $\nu(J)(E_i \times I)$, which has the linking number $1$ with the $i$th component of $J$. For $i = 1, 2, \cdots, r$, let $\delta_i : I \rightarrow \partial E_i \times I$ be a $0$-framed parallel of $i$th component such that $\delta_i(0) = \gamma_i(0) \times \{ 1 \} \in \mathbb{E} \times I$ and $\delta_i(1) = \gamma_i (0) \times \{ 1 \} \in \mathbb{E} \times I$. Then the longitude $l_i$ of the $i$th component of $J$ is the concatenation of arcs as follows: $l_i = \delta_i \cup (\gamma_i \times \{ 1 \}) \cup (\gamma_i(1) \times I) \cup (-\gamma_i \times \{0\})$.
\end{enumerate}
\end{definition}

The following definitions are also from \cite{JKP14}.
\begin{definition} (Infection by a string link)
Let $L$ be a link in $S^3$ and $J$ be an $r$-component string link in $D^2\times I$.
\begin{enumerate}

\item Let $\mathbb{E}$ be an $r$-multi disk, then an embedding $\varphi : \mathbb{E} \rightarrow S^3$ is a proper $r$-multi disk in $(S^3,L)$ if $\varphi(\mathbb{E})$ intersects $L$ only in $\varphi(E_i)$ transversely for $i=1,2, \cdots, r$.

\item Let $\mathbb{E}_\varphi$ be the image of $\mathbb{E}$ under $\varphi$ and let $E_\varphi$ be the image of $E_1 \bigsqcup E_2 \bigsqcup \cdots \bigsqcup E_r$ under $\varphi$. We define the link $S(L,J,\varphi)$ to be the image of the link $L$ under the following homeomorphism (for detailed discussion for this homeomorphism see \cite{CFT09}):
\begin{equation*}
\begin{split}
   & (S^3 \setminus (int(\mathbb{E}_\varphi \setminus E_\varphi) \times I)) \cup ((D^2 \times I) \setminus \nu(J))\\
= & (S^3 \setminus (\mathbb{E}_\varphi \times I)) \cup (((D^2 \times I) \setminus \nu(J)) \cup (\overline{\mathbb{E}_\varphi} \times I))\\
\cong  & S^3.
\end{split}
\end{equation*}
\end{enumerate}
\end{definition}

We call the resulting link $S(L,J,\varphi)$ multi-infection of $L$ by $J$ along $\mathbb{E}_\varphi$. In Figure $4$ we present an example of $S(L,J,\varphi)$ for a particular $L, J$ and $\mathbb{E}_\varphi$.

\begin{figure}[h]
\centering
\includegraphics[width=6in]{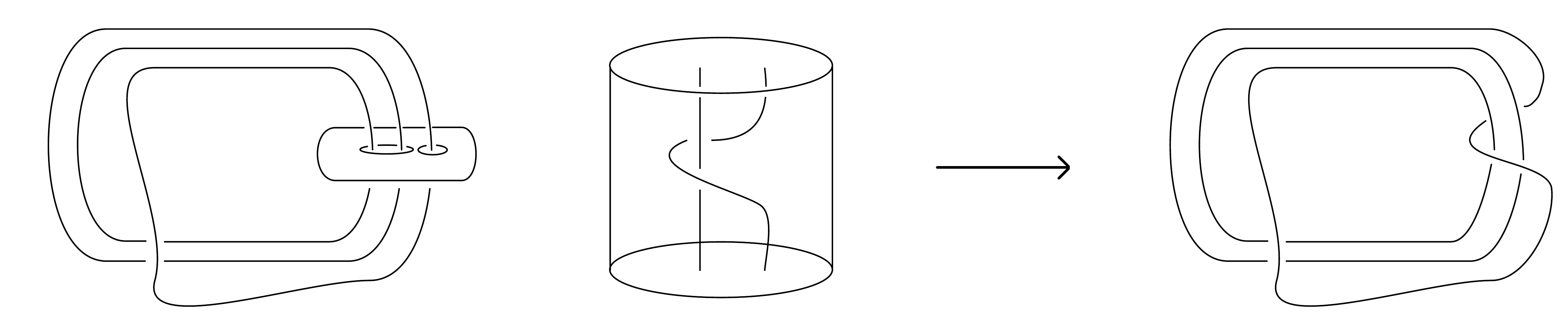}
\put(-1,-0.1){$S(L,J,\varphi)$}
\put(-5,-0.1){$L$}
\put(-3.25,-0.1){$J$}
\put(-5,0.6){$\mathbb{E}_\varphi$}

\caption{Infection by a string link}
\end{figure}

\subsection{Geometric Moves on Knots}
In this section we will recall definitions of double delta move and double Borromean rings insertion move and see how they are related. The following definitions are from \cite{Ma15}.

\begin{definition} (Double delta move)
A double delta move on a knot $K$ is the local move described in Figure $5$.
\end{definition}

\begin{figure}[h]
\centering
\includegraphics[width=4in]{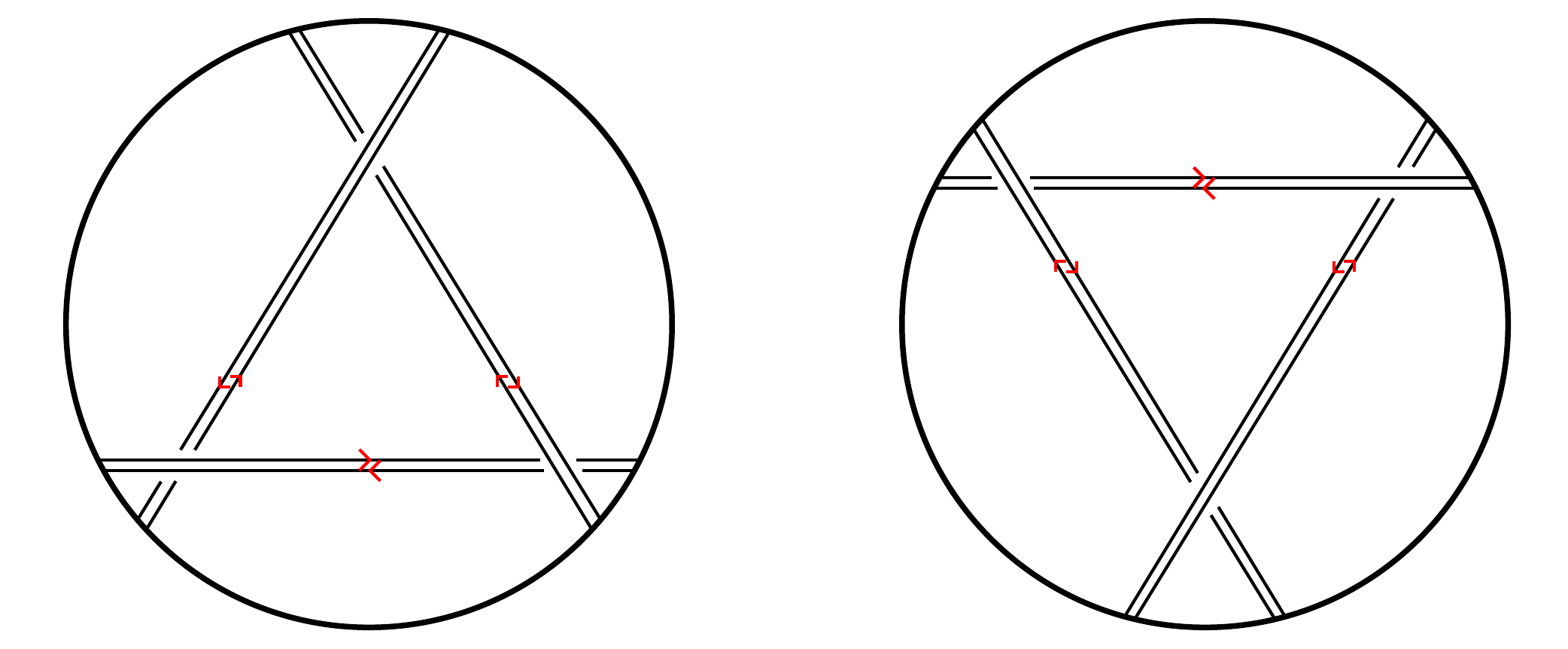}
\put(-2.05,0.85){$\Leftrightarrow$}

\caption{A double delta move}
\end{figure}

\begin{definition} (Double Borromean rings insertion move)
A double Borromean rings insertion move on a knot $K$ is the local move described in Figure $6$.
\end{definition}

\begin{figure}[h]
\centering
\includegraphics[width=4in]{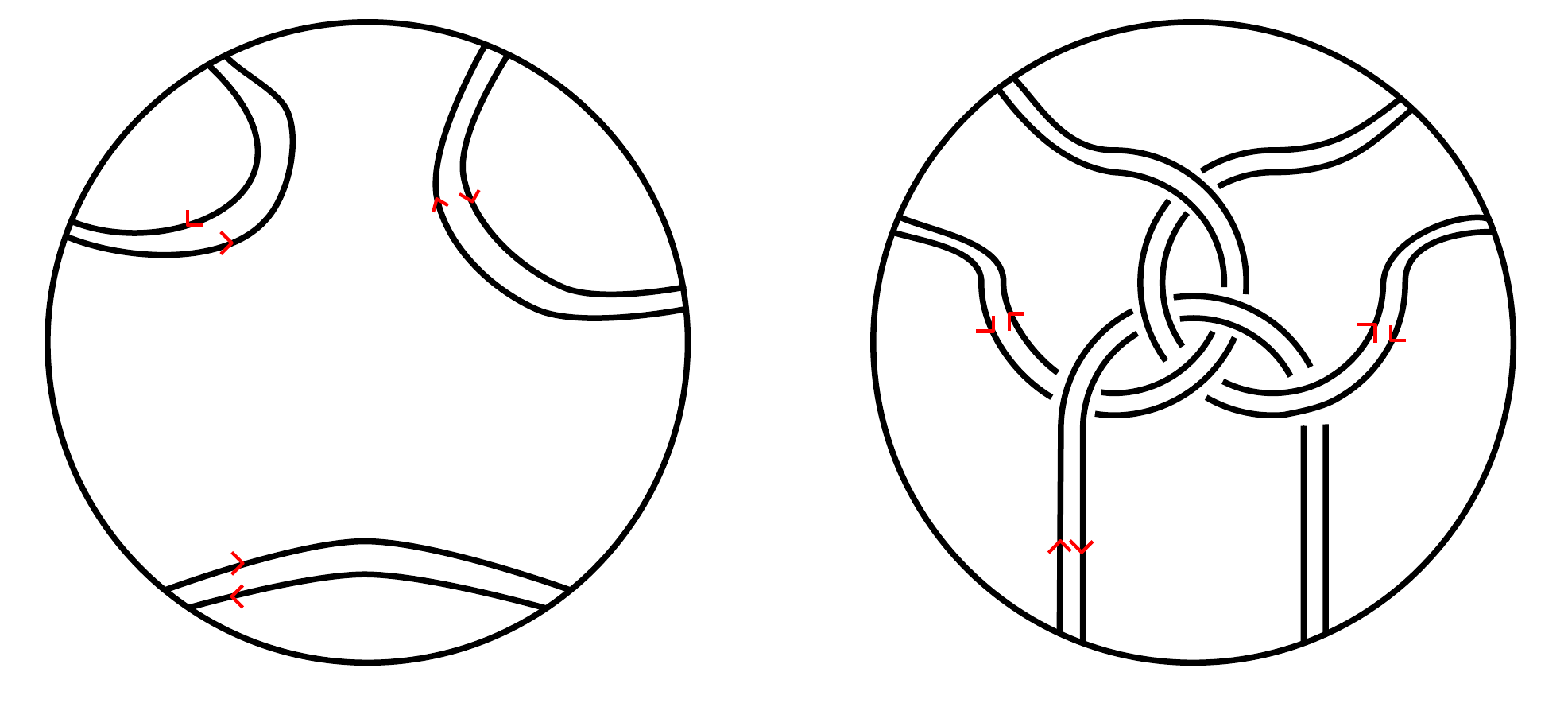}
\put(-2.05,0.85){$\Rightarrow$}

\caption{A double Borromean rings insertion move}
\end{figure}

\begin{remark} $ $
\begin{enumerate}
\item Notice that a double Borromean rings insertion move is a special case of a string link infection $S(L,J,\varphi)$ where $\varphi : \mathbb{E} \rightarrow S^3$ is given as in Figure $7$, and $\widehat{J}$ is a Borromean ring.
\item A double delta move can be achieved by a double Borromean rings insertion move which is explained in Figure $8$.
\end{enumerate}
\end{remark}

\begin{figure}[h]
\centering
\includegraphics[width=3in]{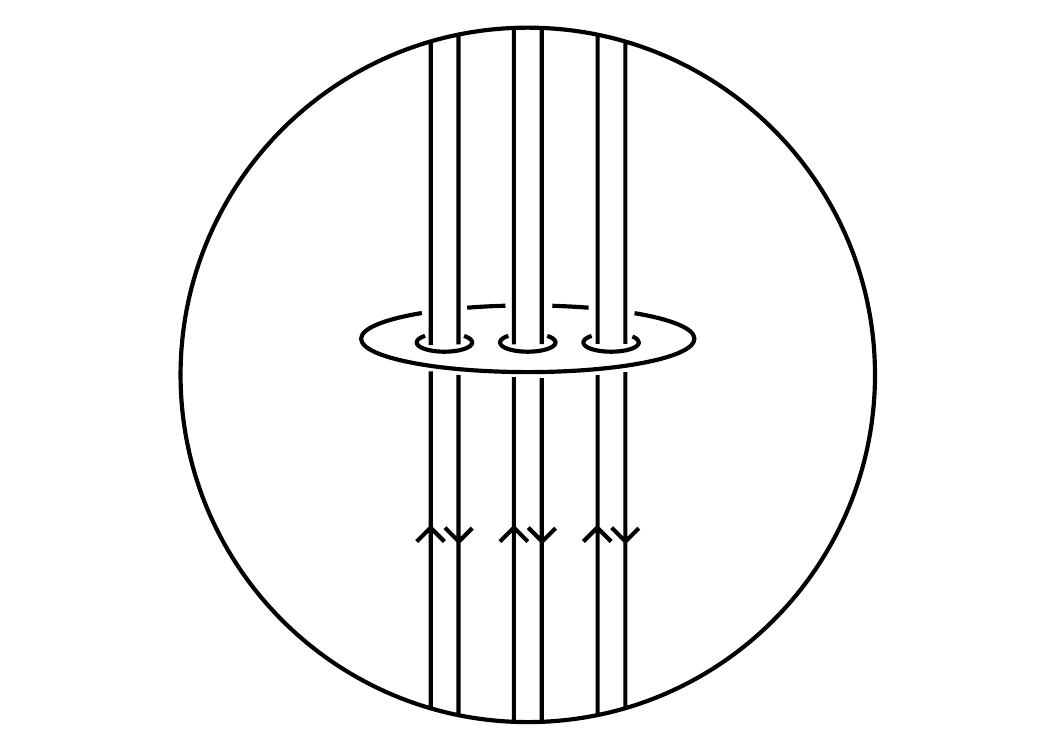}
\put(-2,1.35){$\mathbb{E}_\varphi$}
\put(-1.1,0.7){$L$}

\caption{A double Borromean rings insertion move is a special case of a string link infection.}
\end{figure}
\begin{figure}[h]
\centering
\includegraphics[width=6in]{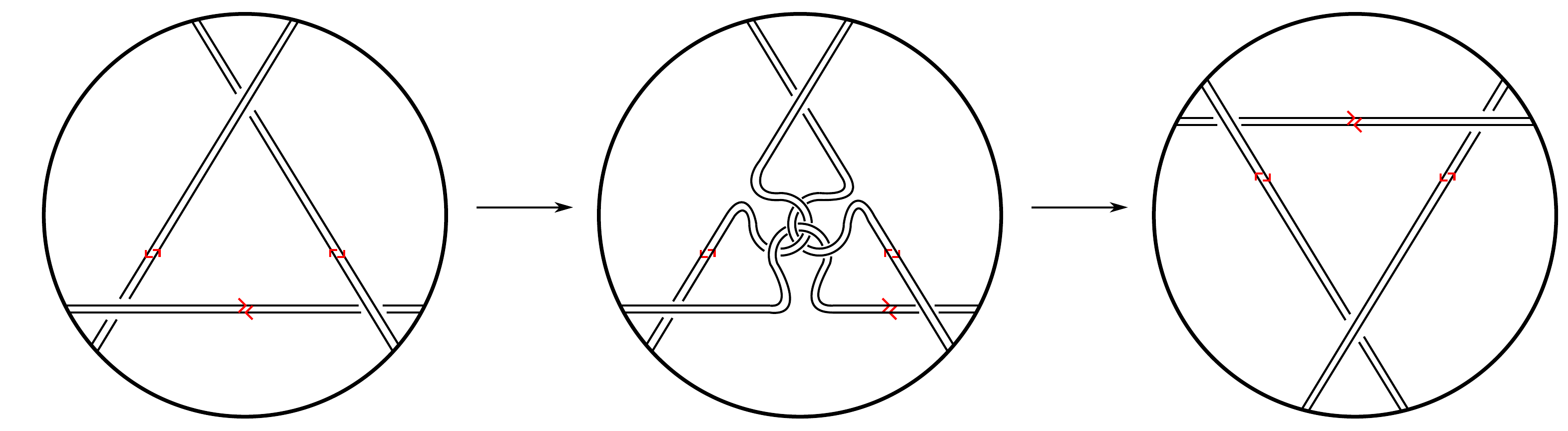}
\put(-2.1,-0.1){isotopy}
\put(-4.55,-0.1){double Borromean}
\put(-4.55,-0.25){rings insertion move}

\caption{A double delta move achieved by a double Borromean rings insertion move.}
\end{figure}

Let $K_1$ and $K_2$ be knots with a Seifert surface $F_1$ and $F_2$ respectively. Suppose that genus of $F_1$ and $F_2$ are the same and the Seifert Form of $K_1$ and $K_2$ are the same. Then it is known that it is possible to alter $K_1$ only by applying double delta move on the band of $F_1$ to obtain $K_2$ (see \cite{NS03}). Then by the Remark $2.8. (2)$ we can get to $K_2$ from $K_1$ only by applying double Borromean rings insertion moves on the band of $F_1$. We will use this fact later in Section $4$.

\section{The effect of string link infection on Milnor's triple linking number}
In this section we will recall a Lemma from \cite[Lemma 4.1]{JKP14}. We will change assumptions slightly from \cite[4.1]{JKP14} for the purpose of this paper. For completeness we will also present the proof of the Lemma from \cite[Lemma 4.1]{JKP14}.

\begin{lemma} \cite[Lemma 4.1]{JKP14} Let $L= L_1 \bigsqcup L_2 \bigsqcup L_3$ be an oriented three component link with pairwise linking number zero, and let $J$ be an oriented three component string link whose closure $\widehat{J}$ has pairwise linking number zero. Let $\varphi : \mathbb{E} \rightarrow S^3$ be a proper 3-multi disc in $(S^3,L)$. Denote the algebraic intersection number between $\varphi(E_i)$ and $L_j$ by $n^j_i$ for $i,j \in \{1,2,3\}$. Then $S(L,J,\varphi)$ has zero pairwise linking number and $\bar{\mu}_{S(L,J,\varphi)}(123) = \bar{\mu}_{\widehat{J}}(123)(\sum\limits_{\sigma \in S_3} sign(\sigma)n^{\sigma(1)}_1n^{\sigma(2)}_2n^{\sigma(3)}_3) + \bar{\mu}_{L}(123)$.
\end{lemma}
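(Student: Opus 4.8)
The plan is to compute $\bar{\mu}_{S(L,J,\varphi)}(123)$ via the fundamental-group description of Section $2.2$, by tracking how the $0$-framed longitude of the third component changes under the infection. Write $\pi = \pi_1(S^3 \setminus S(L,J,\varphi))$. \emph{First} I would check that $S(L,J,\varphi)$ has vanishing pairwise linking numbers: inside the infection region each strand of $L_j$ meeting $\varphi(E_k)$ becomes a parallel copy of the $k$th strand of $J$, so for $i \neq j$ one has $\lk\big(S(L,J,\varphi)_i, S(L,J,\varphi)_j\big) = \lk(L_i,L_j) + \sum_{k,l} n^i_k n^j_l \,\lk_{\widehat J}(k,l)$; the $k \neq l$ terms vanish because $\widehat J$ has pairwise linking zero, and the $k=l$ terms vanish because the longitudes of $J$ are $0$-framed, leaving $\lk(L_i,L_j) = 0$. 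Hence the Milnor isomorphism $F/F_3 \cong \pi/\pi_3$ applies to $S(L,J,\varphi)$, where $F = \langle x_1,x_2,x_3\rangle$ is free on the meridians $\mu_1,\mu_2,\mu_3$, and $\bar{\mu}_{S(L,J,\varphi)}(123)$ is the coefficient of $[x_1,x_2]$ in the image of the third longitude $\lambda_3$ in $\pi_2/\pi_3 = F_2/F_3$.

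\emph{Second}, the geometric core: I would express this $\lambda_3$ in terms of the old data. Let $g : \pi_1\big((D^2\times I)\setminus J\big) \to \pi$ be the map induced by the gluing of Definition $2.6$, with $m_1,m_2,m_3$ the meridians and $\ell_1,\ell_2,\ell_3$ the longitudes of $J$. Following $\lambda_3$ through the regluing, at each point where $L_3$ crossed a disk $\varphi(E_k)$ the third component is now cut and reglued along a parallel copy of the $k$th strand of $J$, which inserts a based copy of $\ell_k^{\pm 1}$. Since $\widehat J$ has vanishing pairwise linking each $\ell_k$ lies in the second lower central series of $\pi_1\big((D^2\times I)\setminus J\big)$, so $g(\ell_k) \in \pi_2$; then Proposition $2.2$ makes the basings irrelevant in $\pi_2/\pi_3$, and abelianness of that group lets the insertions add up with multiplicities, yielding
$$\lambda_3 \;\equiv\; \lambda_3^{L} \;+\; \sum_{k=1}^{3} n^{3}_{k}\, g(\ell_k) \quad\text{in } \pi_2/\pi_3 = F_2/F_3,$$
where $\lambda_3^{L}$ is the image of the old longitude of $L_3$, whose $[x_1,x_2]$-coefficient is $\bar{\mu}_L(123)$ by definition.

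\emph{Third}, I would compute the contribution of the correction terms to the $[x_1,x_2]$-coefficient. Vanishing of the pairwise linking of $\widehat J$ forces each $\ell_k$, modulo the third lower central series, to be a product of commutators of $m_1,m_2,m_3$; concretely $\ell_3 \equiv [m_1,m_2]^{\bar{\mu}_{\widehat J}(123)}$, $\ell_1 \equiv [m_2,m_3]^{\bar{\mu}_{\widehat J}(123)}$, $\ell_2 \equiv [m_1,m_3]^{-\bar{\mu}_{\widehat J}(123)}$. A small meridian $g(m_k)$ links each $S(L,J,\varphi)_j$ a total of $n^j_k$ times, so it is a word in conjugates of the $x_j$ with exponent sum $n^j_k$ in $x_j$. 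Applying $g$ to the three commutator expressions and invoking Proposition $2.3$ with these exponent sums reads off the $[x_1,x_2]$-coefficient of each $g(\ell_k)$; multiplying by $n^3_k$ and summing over $k$, the result simplifies to $\bar{\mu}_{\widehat J}(123) \cdot \sum_{\sigma\in S_3}\operatorname{sign}(\sigma)\,n^{\sigma(1)}_1 n^{\sigma(2)}_2 n^{\sigma(3)}_3$, a $3\times 3$ determinant in the $n^j_i$. Combined with the $\bar{\mu}_L(123)$ coming from $\lambda_3^{L}$, this is the stated formula.

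\emph{Main obstacle.} The delicate part is the second step: justifying precisely, from the explicit homeomorphism in Definition $2.6$, that following the reglued third component inserts a based copy of $\ell_k$ (with the correct sign and framing) at each crossing with $\varphi(E_k)$, and that $g(m_k)$ has $H_1$-class $\sum_j n^j_k[\mu_j]$. Everything downstream is formal manipulation with Propositions $2.1$--$2.3$ and the determinant identity. A clean alternative for the second and third steps, which I might present instead, is Cochran's triple-point count: take Seifert surfaces for $L_1,L_2,L_3$ meeting the multi-disk in arcs (possible since all linking numbers vanish), complete them inside the infection region with Seifert surfaces for the components of $\widehat J$, and count signed triple points; those surviving from $L$ give $\bar{\mu}_L(123)$, while those created in the infection region contribute $\bar{\mu}_{\widehat J}(123)$ weighted by the number of triples of strands of $L$ passing through the three disks, which again assembles into the same determinant.
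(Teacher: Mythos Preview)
Your approach is genuinely different from the paper's. The paper reinterprets $S(L,J,\varphi)$ as a sequence of \emph{exterior band sums} between $L$ and the closure of a cable $J'$ of $J$ (taking $\alpha_i^j+\beta_i^j$ parallel copies of the $i$th strand, with signs recording the intersection signs with $\varphi(E_i)$), and then invokes Cochran's theorem that the first non-vanishing Milnor invariant is additive under exterior band sum. The determinant in the $n_i^j$ falls out of a signed count over which strands of $J'$ get banded to which component of $L$, followed by the algebraic simplification $\alpha-\beta=n$. Your longitude-tracking argument is more hands-on and avoids the band-sum/additivity black box, which is appealing; your Cochran-surface alternative is closer in spirit to the paper but still not the same argument.

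There is, however, a genuine gap in your third step. The assertion $\ell_3\equiv [m_1,m_2]^{\bar\mu_{\widehat J}(123)}$ in $F_2/F_3$ is false in general: the correct expression is
\[
\ell_3\equiv [m_1,m_2]^{\bar\mu_{\widehat J}(123)}\,[m_1,m_3]^{\bar\mu_{\widehat J}(133)}\,[m_2,m_3]^{\bar\mu_{\widehat J}(233)},
\]
and the repeated-index invariants $\bar\mu_{\widehat J}(ijj)$ need not vanish for a string link with pairwise-linking-zero closure. After applying $g$, which sends each $m_i$ to a word with nonzero exponent sum in \emph{all three} $x_j$, these extra commutators contribute to the $[x_1,x_2]$-coefficient via Proposition~2.3, so the displayed computation can give the wrong answer. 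Concretely, take $L$ the unlink with $n_1^1=n_3^2=n_3^3=1$ and all other $n_i^j=0$: the determinant vanishes, so the lemma predicts $\bar\mu_{S(L,J,\varphi)}(123)=0$, yet your recipe yields $\bar\mu_{\widehat J}(133)$. The discrepancy signals that the longitude formula in step two is already oversimplified: the word inserted at a crossing of $L_3$ with $\varphi(E_k)$ is the longitude of that parallel strand in the exterior of \emph{all} the parallel strands produced by the infection, not $g(\ell_k)$ computed in the exterior of $J$ itself. Fixing this requires either carefully invoking the symmetry relations among length-three Milnor invariants of $\widehat J$ to force the spurious terms to cancel, or reformulating step two; the paper's band-sum route sidesteps this bookkeeping entirely.
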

\begin{proof}

It is straightforward to see that $S(L,J,\varphi)$ has pairwise linking number zero, since $\widehat{J}$ has pairwise linking number zero. Asumme that $\varphi(E_i)$ intersects the link $L$ transversely. Let $\alpha^j_i$ be the number of positive intersection and $\beta^j_i$ be the number of negative intersection between $\varphi(E_i)$ and $L_j$ for $i,j \in \{1,2,3\}$. Let $J'$ be the oriented string link where we take $\alpha^1_i+\alpha^2_i+\alpha^3_i$ many parallel copy of $i$th component with the same orientation and $\beta^1_i+\beta^2_i+\beta^3_i$ many parallel copy of $i$th component with the opposite orientation for $i\in \{1,2,3\}$. Then we can consider $S(L,J,\varphi)$ as the result of performing several exterior band sums between $L$ and $\widehat{J'}$, the closure of $J'$ (see Figure $9$).

\begin{figure}[h]
\centering
\includegraphics[width=6in]{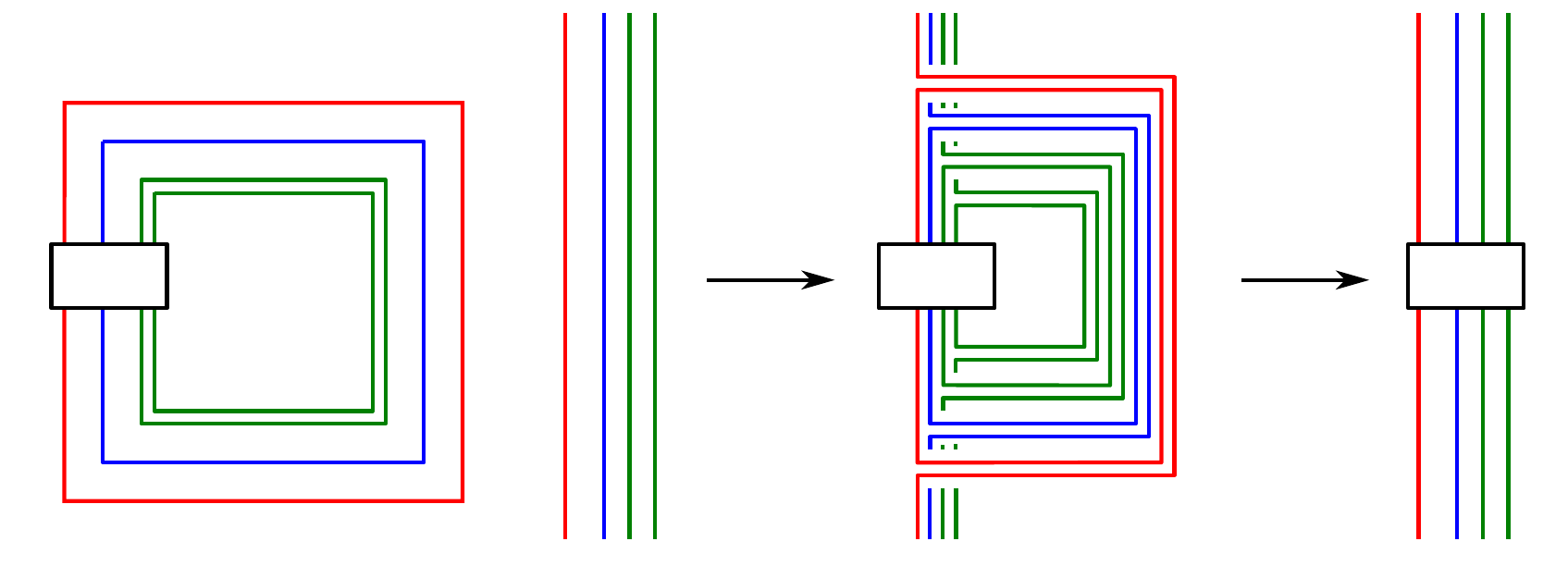}
\put(-5.65,1.06){$J'$}
\put(-2.47,1.06){$J'$}
\put(-0.47,1.06){$J'$}
\put(-3.3,1.2){isotopy}
\put(-1.3,1.2){$band sum$}

\caption{Band sums between $L$ and $\widehat{J'}$, the closure of $J'$.}
\end{figure}

We will label each component of $J'$ with index $\{1,2, \cdots, \sum\limits_{i,j} (\alpha^j_i + \beta^j_i)\}$ and we define a set map $h : \{1,2, \cdots, \sum\limits_{i,j} (\alpha^j_i + \beta^j_i)\} \rightarrow \{1,2,3\} $ where, for $n\in \{1,2, \cdots, \sum\limits_{i,j} (\alpha^j_i + \beta^j_i)\}$, $h(n)$ is the index of the component of the link $L=L_1\bigsqcup L_2 \bigsqcup L_3 $ where $n$th component of $J'$ is being banned summed to. It is known that the first non-vanishing Milnor invariant is additive under the exterior band sum (see \cite{Co90}). Hence, we have the following equation : 
\begin{equation*}
\bar{\mu}_{S(L,J,\varphi)}(123) = \sum\limits_{\{ I' \subset \{1,2, \cdots, \sum\limits_{i,j} (\alpha^j_i + \beta^j_i)\} | h(I')=\{1,2,3\}, |I'|=3 \}} sign(h(I'_1)h(I'_2)h(I'_3)) \cdot \bar{\mu}_{\widehat{J}}(123) +  \bar{\mu}_L(123) 
\end{equation*}
where $I'_i$ is $i$th component of $I'$ for $i=1,2,3$ and we are considering $(h(I'_1)h(I'_2)h(I'_3))$ as an element of $S_3$.

Recall that reversing the orientation of a component of a link changes the sign of the Milnor's triple linking number (see section 1.2). Also, note that $n^i_j$, the algebraic intersection number between $\varphi(E_i)$ and $L_j$, is equal to $\alpha^j_i - \beta^j_i$. Combining these with the above equation, we have the following desired equation : 

\begin{equation*}
\begin{split}
\bar{\mu}_{S(L,J,\varphi)}(123) = \: & \bar{\mu}_{\widehat{J}}(123)\cdot\sum\limits_{\sigma \in S_3} (sign(\sigma)\cdot(\alpha^{\sigma(1)}_1\alpha^{\sigma(2)}_2\alpha^{\sigma(3)}_3-\beta^{\sigma(1)}_1\alpha^{\sigma(2)}_2\alpha^{\sigma(3)}_3-\alpha^{\sigma(1)}_1\beta^{\sigma(2)}_2\alpha^{\sigma(3)}_3-\\
&\alpha^{\sigma(1)}_1\alpha^{\sigma(2)}_2\beta^{\sigma(3)}_3+\beta^{\sigma(1)}_1\beta^{\sigma(2)}_2\alpha^{\sigma(3)}_3+\alpha^{\sigma(1)}_1\beta^{\sigma(2)}_2\beta^{\sigma(3)}_3+\beta^{\sigma(1)}_1\alpha^{\sigma(2)}_2\beta^{\sigma(3)}_3-\\
&\beta^{\sigma(1)}_1\beta^{\sigma(2)}_2\beta^{\sigma(3)}_3)) +  \bar{\mu}_L(123)  \\
= \: & \bar{\mu}_{\widehat{J}}(123)\cdot\sum\limits_{\sigma \in S_3} (sign(\sigma)\cdot(n^{\sigma(1)}_1\alpha^{\sigma(2)}_2\alpha^{\sigma(3)}_3-\alpha^{\sigma(1)}_1\beta^{\sigma(2)}_2n^{\sigma(3)}_3-n^{\sigma(1)}_1\alpha^{\sigma(2)}_2\beta^{\sigma(3)}_3+\\
& \beta^{\sigma(1)}_1\beta^{\sigma(2)}_2n^{\sigma(3)}_3)) + \bar{\mu}_L(123) \\
= \: & \bar{\mu}_{\widehat{J}}(123)\cdot\sum\limits_{\sigma \in S_3} (sign(\sigma)\cdot(n^{\sigma(1)}_1\alpha^{\sigma(2)}_2n^{\sigma(3)}_3-n^{\sigma(1)}_1\beta^{\sigma(2)}_2n^{\sigma(3)}_3)) + \bar{\mu}_L(123) \\
= \: & \bar{\mu}_{\widehat{J}}(123)\cdot\sum\limits_{\sigma \in S_3} (sign(\sigma)\cdot(n^{\sigma(1)}_1n^{\sigma(2)}_2n^{\sigma(3)}_3)) + \bar{\mu}_L(123).
\end{split}
\end{equation*}

\end{proof}
\section{Main theorem and applications}\label{Main Theorem and applications}
In this section we will present the main theorem and see its applications. Before that we prove two lemmas which will be useful. For any algebraically slice knot $K$ with metabolizer $H$, where $\{ b_1, b_2, b_3 \}$ is a basis for $H$, we will define $S_{K,H,\{b_1,b_2,b_3\}} $ as a set of integers $\{ \bar{\mu}_{\{\gamma_1,\gamma_2,\gamma_3\}}(123) - \bar{\mu}_{\{\gamma'_1,\gamma'_2,\gamma'_3\}}(123)| \{\gamma_1,\gamma_2,\gamma_3\}$ and $\{\gamma'_1,\gamma'_2,\gamma'_3\}$ are derivatives of $K$ associated with $H$, $b_1 = [\gamma_1] = [\gamma'_1], b_2=[\gamma_2]=[\gamma'_2], b_3=[\gamma_3]=[\gamma'_3] \}$. We will also define $S_{K,H} $ as a set of integers $\{ \bar{\mu}_{\{\gamma_1,\gamma_2,\gamma_3\}}(123) - \bar{\mu}_{\{\gamma'_1,\gamma'_2,\gamma'_3\}}(123)| \{\gamma_1,\gamma_2,\gamma_3\}$ and $\{\gamma'_1,\gamma'_2,\gamma'_3\}$ are derivatives of $K$ associated with $H\}$.

\begin{lemma} Let $K$ and $\widetilde{K}$ be algebraically slice knots, and suppose $F$ and $\widetilde{F}$ are genus three Seifert surfaces for $K$ and $\widetilde{K}$ respectively. Let $\{ a_1, a_2, a_3, b_1, b_2, b_3\} $ be a symplectic basis for $H_1(F)$ where $\spn(b_1,b_2,b_3)$ is a metaboliser $H$ for $K$, and let  $\{ \widetilde{a_1}, \widetilde{a_2}, \widetilde{a_3}, \widetilde{b_1}, \widetilde{b_2}, \widetilde{b_3}\} $ be a symplectic basis for $H_1(\widetilde{F})$ where $\spn(\widetilde{b_1},\widetilde{b_2},\widetilde{b_3})$ is a metaboliser $\widetilde{H}$ for $\widetilde{K}$. We will denote $M$ as the Seifert matrix which arises from the basis $\{ a_1, b_1, a_2, b_2, a_3, b_3\} $ and $\widetilde{M}$ as the Seifert matrix which arises from the basis $\{ \widetilde{a_1}, \widetilde{b_1}, \widetilde{a_2}, \widetilde{b_2}, \widetilde{a_3}, \widetilde{b_3}\} $. If $M=\widetilde{M}$, we have $S_{K,H,\{b_1,b_2,b_3\}} = S_{\widetilde{K},\widetilde{H},\{\widetilde{b_1},\widetilde{b_2},\widetilde{b_3}\}}$ as a set.
\end{lemma}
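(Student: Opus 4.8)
The plan is to combine the result of Naik--Stanford recalled at the end of Section 2.4 with Lemma 3.1. Since $F$ and $\widetilde F$ are genus three Seifert surfaces and $M=\widetilde M$, we may pass from $K$ to $\widetilde K$ by a finite sequence of double delta moves performed on the bands of $F$, and by Remark 2.8(2) each of these is realized by a double Borromean rings insertion move on the bands. Since such a move is compatible with the disk--band structure, performing the whole sequence carries the disk--band surface $F$ with its symplectic basis $\{a_1,a_2,a_3,b_1,b_2,b_3\}$ to the disk--band surface $\widetilde F$ with its symplectic basis $\{\widetilde a_1,\widetilde a_2,\widetilde a_3,\widetilde b_1,\widetilde b_2,\widetilde b_3\}$. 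Thus there is a homeomorphism $h\colon S^3\to S^3$, assembled from these string link infections, with $h(K)=\widetilde K$, $h(F)=\widetilde F$, $h_*(a_i)=\widetilde a_i$, and $h_*(b_i)=\widetilde b_i$ for $i=1,2,3$.

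First I would transport derivatives through $h$. If $\{\gamma_1,\gamma_2,\gamma_3\}$ is a derivative of $K$ associated with $H$ with $[\gamma_i]=b_i$, then $\{h(\gamma_1),h(\gamma_2),h(\gamma_3)\}$ is a disjointly embedded oriented link in $\widetilde F=h(F)$ with $[h(\gamma_i)]=h_*(b_i)=\widetilde b_i$; since $\spn(\widetilde b_1,\widetilde b_2,\widetilde b_3)=\widetilde H$ is a metabolizer for $\widetilde K$, it is a derivative of $\widetilde K$ associated with $\widetilde H$ realizing the basis $\{\widetilde b_1,\widetilde b_2,\widetilde b_3\}$. Running the reverse sequence of moves provides an inverse construction, so $\{\gamma_i\}\mapsto\{h(\gamma_i)\}$ is a bijection between the set $\mathcal D_{K}$ of derivatives of $K$ associated with $H$ realizing $\{b_1,b_2,b_3\}$ and the set $\mathcal D_{\widetilde K}$ of derivatives of $\widetilde K$ associated with $\widetilde H$ realizing $\{\widetilde b_1,\widetilde b_2,\widetilde b_3\}$.

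Next I would track the effect on $\bar{\mu}(123)$. For any $\{\gamma_i\}\in\mathcal D_K$ the three components have pairwise linking number zero (these linking numbers are entries of the vanishing metabolizer block of $M$), so $\bar{\mu}_{\{\gamma_1,\gamma_2,\gamma_3\}}(123)$ is defined, and along the construction the link $\{\gamma_1,\gamma_2,\gamma_3\}$ undergoes, step by step, a string link infection $S(\,\cdot\,,J,\varphi)$ whose infecting link $\widehat J$ is the Borromean rings, so $\bar{\mu}_{\widehat J}(123)=\pm1$ (and by Lemma 3.1 pairwise linking numbers stay zero throughout). By Lemma 3.1, one such step changes $\bar{\mu}(123)$ by $\bar{\mu}_{\widehat J}(123)\bigl(\sum_{\sigma\in S_3}sign(\sigma)\,n^{\sigma(1)}_1 n^{\sigma(2)}_2 n^{\sigma(3)}_3\bigr)$, where $n^{j}_{i}$ is the algebraic intersection number of $\varphi(E_i)$ with the $j$th component, that is, $n^{j}_{i}=\lk(\partial\varphi(E_i),\gamma_j)$. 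The key point is that in a double Borromean rings insertion on bands of $F$ each circle $\partial\varphi(E_i)$ lies in $S^3\setminus F$, so $\lk(\partial\varphi(E_i),-)$ restricted to curves in $F$ factors through $H_1(F)$; hence $n^{j}_{i}$ depends only on $b_j=[\gamma_j]$ and on the chosen move, not on the individual derivative. (In fact each $\partial\varphi(E_i)$ clasps, rather than meridionally links, a band, so $[\partial\varphi(E_i)]=0$ in $H_1(S^3\setminus F)$ and every $n^{j}_{i}=0$; this is precisely what makes the geometric move preserve the Seifert form, in agreement with $M=\widetilde M$.) Consequently the total shift $c:=\bar{\mu}_{\{h(\gamma_i)\}}(123)-\bar{\mu}_{\{\gamma_i\}}(123)$ is one fixed integer, independent of the choice of $\{\gamma_i\}\in\mathcal D_K$.

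Finally, let $V_K=\{\bar{\mu}_{d}(123):d\in\mathcal D_K\}$ and $V_{\widetilde K}=\{\bar{\mu}_{d}(123):d\in\mathcal D_{\widetilde K}\}$. The bijection of the second step together with the constant shift of the third step gives $V_{\widetilde K}=V_K+c$, and therefore $S_{\widetilde K,\widetilde H,\{\widetilde b_1,\widetilde b_2,\widetilde b_3\}}=V_{\widetilde K}-V_{\widetilde K}=V_K-V_K=S_{K,H,\{b_1,b_2,b_3\}}$, which is the assertion. I expect the main obstacle to be the third step: one must understand the double Borromean rings insertion precisely enough to see that the intersection data $n^{j}_{i}$ is independent of the chosen derivative (equivalently, that the underlying geometric move preserves the Seifert form) — this is exactly the point of the adjective ``double''. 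A secondary point that requires care is the first step, namely arranging the Naik--Stanford moves so that they match up not merely the knots $K$ and $\widetilde K$ but the full disk--band surfaces $F$ and $\widetilde F$ together with their symplectic bases.
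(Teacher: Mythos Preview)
Your argument is correct and is essentially the paper's own proof: pass from $K$ to $\widetilde K$ by a sequence of double Borromean rings insertions on the bands (Naik--Stanford plus Remark~2.8(2)), and use Lemma~3.1 together with the observation that each $\partial\varphi(E_i)$ lies in $S^3\setminus F$, so the intersection numbers $n^j_i$ depend only on $[\gamma_j]=b_j$ and hence the shift in $\bar\mu(123)$ is the same for every derivative realizing $\{b_1,b_2,b_3\}$.

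One small correction: your parenthetical claim that each $\partial\varphi(E_i)$ \emph{clasps} a band and hence $n^j_i=0$ is not right. In a double Borromean rings insertion on the bands of $F$, the disk $\varphi(E_i)$ is pierced by a band of $F$, so $\partial\varphi(E_i)$ is a \emph{meridian} of that band and links its core once; thus if $\gamma_j$ traverses that band algebraically $k$ times, then $n^j_i=\pm k$, which is generally nonzero. (What makes the move preserve the Seifert form is that the Borromean rings have pairwise linking number zero, not that the $n^j_i$ vanish.) This does not affect your proof, since you only use the weaker --- and correct --- fact that $n^j_i$ depends solely on the homology class $b_j$, which you had already established before the parenthetical.
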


\begin{proof}
Using symplectic bases $\{ a_1, a_2, a_3, b_1, b_2, b_3\} $ and $\{ \widetilde{a_1}, \widetilde{a_2}, \widetilde{a_3}, \widetilde{b_1}, \widetilde{b_2}, \widetilde{b_3}\} $ we can get disk-band form for $F$ and $\widetilde{F}$. As we previously observed at the end of section $2$, we can get to $\widetilde{K}$ from $K$ only by performing double Borromean rings insertion moves on the band of $F$, since $K$ and $\widetilde{K}$ have the same Seifert form. Also, since we can think of double Borromean rings insertion move as a special case of a string link infection (see Remark $2.8.(1)$), we can apply Lemma $3.1$ to our situation. Suppose $L = \{\gamma_1,\gamma_2,\gamma_3\}$ and $L'=\{\gamma'_1,\gamma'_2,\gamma'_3\}$ are derivatives of $K$ associated with $H$, where $[\gamma_1]=[\gamma'_1]=b_1, [\gamma_2]=[\gamma'_2]=b_2$ and $[\gamma_3]=[\gamma'_3]=b_3$. If we perform one double Borromean rings insertion move on bands of $F$, we get two links $S(L,J,\varphi)$ and $S(L',J,\varphi)$, where $\widehat{J}$ is a Borromean rings. By Lemma $3.1$, for $i,j \in \{1,2,3\}$, Milnor's triple linking number of $S(L,J,\varphi)$ only depends on the algebraic intersection number between $\varphi(E_i)$ and $\gamma_j$, and Milnor's triple linking number of $S(L',J,\varphi)$ only depends on the algebraic intersection number between $\varphi(E_i)$ and $\gamma'_j$. Since $[\gamma_j]=[\gamma'_j]\in H_1(F)$ for $j=1,2,3$, we know that they have the same algebraic intersection number with $\varphi(E_i)$ for $i=1,2,3$. Therefore, we can conclude $\bar\mu_{L}(123)-\bar\mu_{L'}(123) = \bar\mu_{S(L,J,\varphi)}(123)-\bar\mu_{S(L',J,\varphi)}(123)$, i.e. double Borromean rings insertion move does not change the difference of Milnor's triple linking number of two derivatives. By applying more double Borromean rings insertion moves, we can achieive $\widetilde{K}$. Using the same argument, we know that difference of their Milnor's triple linking number after all the double Borromean rings insertion moves is still $\bar\mu_{L}(123)-\bar\mu_{L'}(123)$. Therefore we can conclude that $S_{K,H,\{b_1,b_2,b_3\}} \subseteq S_{\widetilde{K},\widetilde{H},\{\widetilde{b_1},\widetilde{b_2},\widetilde{b_3}\}}$. We get the other inclusion by simply switching the roles of $K$ and $\widetilde{K}$.
\end{proof}

We will present one more lemma which will be useful for the main theorem.

\begin{lemma} Let $K$ be an algebraically slice knot and let $F$ be a genus three Seifert surface for $K$. Suppose $H$ is a metaboliser of $K$ and $\{ a_1, a_2, a_3, b_1, b_2, b_3 \}$ is a symplectic basis for $H_1(F)$ where $\spn(b_1,b_2,b_3) = H$, then $S_{K,H} = S_{K,H,\{ b_1, b_2, b_3 \} } \cup -S_{K,H,\{ b_1, b_2, b_3 \} } $. (i.e. Choice of a basis does not matter once we pick a metaboliser for $K$.)
\end{lemma}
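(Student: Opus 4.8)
The plan is to show that $S_{K,H}$ does not depend on which basis of the metabolizer $H$ we choose to realize as a derivative, beyond an overall sign. First I would observe that the quantity we are computing, $\bar\mu_{\{\gamma_1,\gamma_2,\gamma_3\}}(123)-\bar\mu_{\{\gamma'_1,\gamma'_2,\gamma'_3\}}(123)$, is attached to an ordered triple of curves whose homology classes are an ordered basis of $H$; so the ambiguity is really about changing the ordered basis by an element of $GL_3(\mathbb Z)$. Thus the natural strategy is to understand how $\bar\mu(123)$ of a derivative transforms under the two types of basic operations that generate the effect of $GL_3(\mathbb Z)$ on ordered bases: (i) permuting the curves $\gamma_i$ (together with reversing the orientation of a curve), and (ii) replacing one curve, say $\gamma_1$, by a band-sum $\gamma_1 \# \gamma_2$ realizing the class $b_1+b_2$ on $F$ (and, symmetrically, $\gamma_1\#(-\gamma_2)$ for $b_1-b_2$).

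The key steps, in order, would be: (1) For a permutation $\sigma\in S_3$ of the components and for orientation reversals, invoke the geometric/triple-point description of Milnor's triple linking number recalled in Section~2.2: $\bar\mu_{L}(\sigma(123))=\mathrm{sign}(\sigma)\cdot\bar\mu_L(123)$, and reversing the orientation of a component flips the sign. Hence a permutation of the $\gamma_i$ (respectively, reversing one $\gamma_i$) changes $\bar\mu_{\{\gamma_1,\gamma_2,\gamma_3\}}(123)$ by the same sign $\pm1$ that it changes $\bar\mu_{\{\gamma'_1,\gamma'_2,\gamma'_3\}}(123)$, so the \emph{difference} is multiplied by $\pm1$; this is exactly the $-S_{K,H,\{b_1,b_2,b_3\}}$ that appears in the statement. (2) For a band-sum move $\gamma_1\rightsquigarrow \gamma_1\#(\pm\gamma_2)$ performed \emph{inside} $F$, I would argue that the difference of triple linking numbers between the two derivatives $L=\{\gamma_1,\gamma_2,\gamma_3\}$ and $L'=\{\gamma'_1,\gamma'_2,\gamma'_3\}$ is unchanged when we simultaneously band-sum $\gamma_1$ with $\gamma_2$ and $\gamma'_1$ with $\gamma'_2$: using the additivity of the first nonvanishing Milnor invariant under (exterior) band sum (the Cochran additivity cited already in the proof of Lemma~3.1, together with the fact that $\gamma_2$ and $\gamma'_2$ are homologous in $F$ hence have equal algebraic intersection data), $\bar\mu_{\{\gamma_1\#\gamma_2,\gamma_2,\gamma_3\}}(123)-\bar\mu_{\{\gamma_1,\gamma_2,\gamma_3\}}(123)$ equals $\bar\mu_{\{\gamma'_1\#\gamma'_2,\gamma'_2,\gamma'_3\}}(123)-\bar\mu_{\{\gamma'_1,\gamma'_2,\gamma'_3\}}(123)$. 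Concretely: band-summing inside $F$ is a band-sum with a parallel copy of $\gamma_2$ pushed off in the surface, and since $[\gamma_2]=[\gamma'_2]$ the correction terms coming from Lemma~3.1-type intersection counts coincide for the two derivatives, so they cancel in the difference. (3) Since permutations, orientation reversals, and the single band-sum move $b_1\mapsto b_1\pm b_2$ generate all of $GL_3(\mathbb Z)$ acting on ordered bases of $H$, steps (1)-(2) show that for \emph{any} two ordered bases $\{b_1,b_2,b_3\}$ and $\{b_1',b_2',b_3'\}$ of $H$ we have $S_{K,H,\{b_1',b_2',b_3'\}}=\pm S_{K,H,\{b_1,b_2,b_3\}}$, and moreover that passing between them by a permutation of odd sign (which exists) realizes the minus sign. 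Taking the union over all ordered bases then gives $S_{K,H}=S_{K,H,\{b_1,b_2,b_3\}}\cup -S_{K,H,\{b_1,b_2,b_3\}}$.

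The main obstacle I anticipate is step~(2): one must be careful that the band-sum used to realize $b_1+b_2$ can indeed be taken \emph{within} the Seifert surface $F$ (so that the result is again a derivative associated with $H$), and that the "correction term" from such an in-surface band-sum really does depend only on the homology classes $[\gamma_2]=[\gamma'_2]$ and not on the particular embedded representatives — in other words, that the band-sum move, like the double Borromean rings insertion moves used in Lemma~4.1, only alters $\bar\mu(123)$ through data that is homological in $F$. This requires either a direct Seifert-surface argument (using the triple-point count with the fixed surfaces $F$ restricted appropriately) or reducing it to Lemma~3.1 by viewing the band-sum as supported in a multi-disk whose intersection numbers with the remaining curves are determined by $[\gamma_2]$. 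Once that is pinned down, everything else is the standard generation-of-$GL_3(\mathbb Z)$ bookkeeping together with the already-recalled sign behavior of $\bar\mu(123)$.
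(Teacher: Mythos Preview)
Your proposal has a genuine gap and also diverges from the paper's argument in the key technical step.

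\textbf{The gap.} In your final step you write ``Taking the union over all ordered bases then gives $S_{K,H}=S_{K,H,\{b_1,b_2,b_3\}}\cup -S_{K,H,\{b_1,b_2,b_3\}}$.'' This presumes that $S_{K,H}=\bigcup_{B} S_{K,H,B}$, i.e.\ that the two derivatives appearing in a generic element of $S_{K,H}$ realize the \emph{same} ordered basis of $H$. That is not the definition: in $S_{K,H}$ the derivatives $\{\gamma_i\}$ and $\{\gamma'_i\}$ may realize \emph{different} bases of $H$. All of your step~(2) is carried out under the hypothesis $[\gamma_2]=[\gamma'_2]$ (you say so explicitly), so nothing you prove covers the mixed-basis contributions to $S_{K,H}$. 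Showing only that $S_{K,H,B}=\pm S_{K,H,\{b_1,b_2,b_3\}}$ for every $B$ is strictly weaker than the lemma.

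\textbf{How the paper handles this.} The paper proves something stronger than ``the correction from a band sum is homological'': it shows that banding $\gamma_1$ with a surface-parallel copy of $\gamma_2$ leaves $\bar\mu(123)$ \emph{unchanged}. The argument is purely via the triple-point count: one takes a parallel copy of the Seifert surface $F_2$ for $\gamma_2$, boundary-sums it to $F_1$ along the band to get $F'_1$, and observes that no new triple points $F'_1\cap F_2\cap F_3$ are created because the parallel copy of $F_2$ is disjoint from $F_2$; the remaining adjustments (tubing when the band pierces $F'_2$ or $F'_3$, a cylinder when it pierces $F'_1$) also introduce no triple points. Because $\bar\mu$ itself is preserved (not merely the difference), one can transform \emph{each} derivative independently, by band sums and orientation reversals, to one realizing $\{b_1,b_2,b_3\}$, picking up only a global sign. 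This is exactly what is needed to deal with pairs of derivatives having different bases.

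\textbf{On your step~(2) mechanism.} Your appeal to Cochran's band-sum additivity and to Lemma~3.1 is not on point here. Cochran's additivity concerns exterior band sums between components of \emph{split} links, and Lemma~3.1 concerns string link infection along a multi-disk; neither directly treats banding $\gamma_1$ to a push-off of $\gamma_2$ which is itself a component of the same link. You correctly flag this as the ``main obstacle,'' but the resolution is the paper's Seifert-surface/triple-point argument, which you do not use.
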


\begin{proof}
It is obvious to see $S_{K,H} \supseteq S_{K,H,\{ b_1, b_2, b_3 \} } \cup -S_{K,H,\{ b_1, b_2, b_3 \} } $, since $-S_{K,H,\{ b_1, b_2, b_3 \} } = S_{K,H,\{ -b_1, b_2, b_3 \} } $.

For the other direction, we suppose $\{ \gamma_1, \gamma_2, \gamma_3 \}$ is a derivative of $K$ associated with $H$. Take a parallel copy of $\gamma_2$ and perform a band sum on the Seifert surface $F$ from $\gamma_1$ to the parallel copy of $\gamma_2$ (see Figure $10$). Let $\gamma'_1$ be the resulting knot after the band sum. Also, let $\gamma'_2=\gamma_2$ and $\gamma'_3=\gamma_3$. We claim that $\bar\mu_{\{\gamma_1, \gamma_2, \gamma_3 \}}(123) = \bar\mu_{\{\gamma'_1, \gamma'_2, \gamma'_3 \}}(123)$. Recall from section $2.2$ that there is a nice geometric interpretation of Milnor's triple linking number which was counting the number of triple intersection points of Seifert surfaces with sign. Let $F_1, F_2$ and $F_3$ be Seifert surfaces for $\gamma_1, \gamma_2$ and $\gamma_3$ respectively, where $F_i \cap \gamma_j = \phi$ for $i \neq j$ and $F_1, F_2$ and $F_3$ have isolated triple intersection points. We will take a parallel copy of $F_2$ which bounds the parallel copy of $\gamma_2$ and take a boundary sum with $F_1$ along the band which was used to perform band sum between $\gamma_1$ and the parallel copy of $\gamma_2$ (see Figure $10$). Let $F'_1$ be the resulting surface which bounds $\gamma'_1$. Also, let $F'_2=F_2$ and $F'_3=F_3$. Then notice that we have not introduced any new triple intersection points between $F'_1, F'_2$ and $F'_3$, since parallel copy of $F_2$ and $F_2$ does not intersect. 

\begin{figure}[h]
\centering
\includegraphics[width=3in]{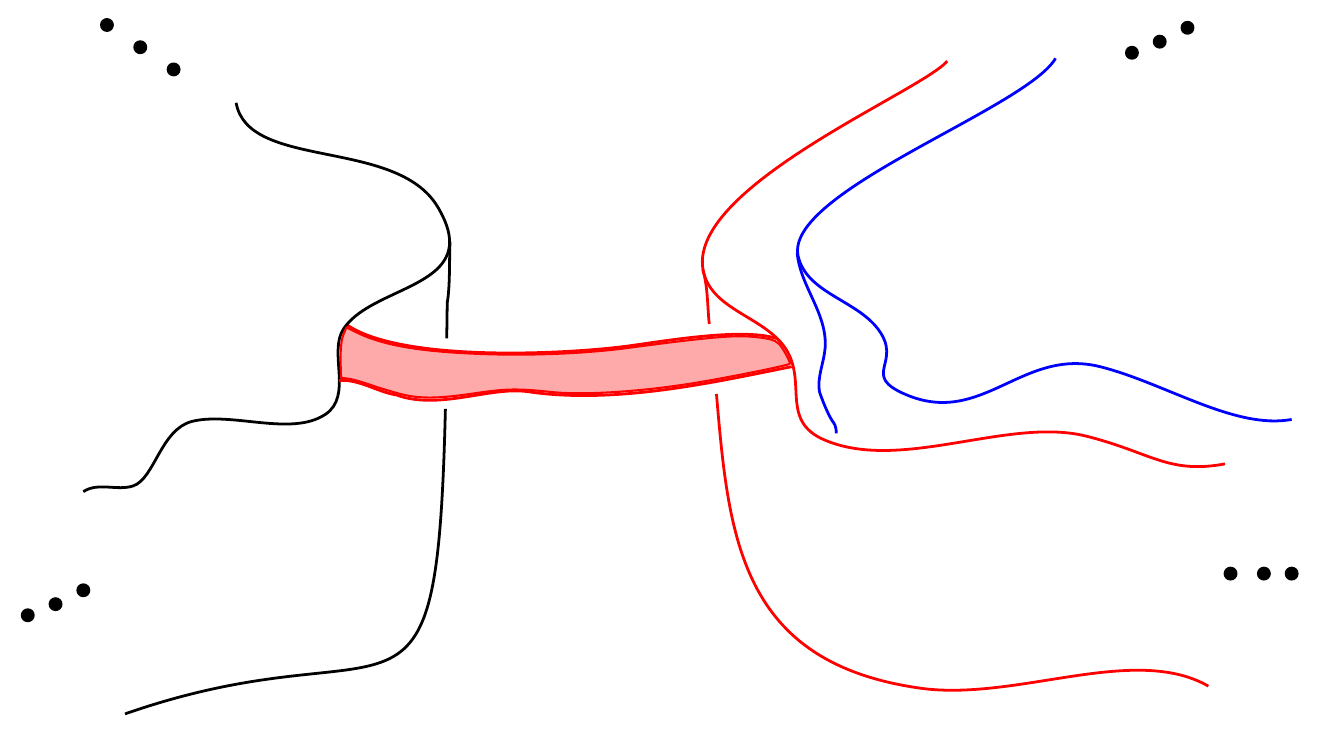}
\put(-2.3,0.3){$F_1$}
\put(-2.2,1.5){$\gamma_1$}
\put(-0.6,1){$\gamma_2$}
\put(-0.1,0.6){$F_2$}
\put(-1.75,1){$\gamma'_1$}
\put(-1.1,0.3){$F'_1$}

\caption{Taking a parallel copy of $\gamma_2$ and performing a band sum with $\gamma_1$.}
\end{figure}

However, it is not guaranteed that $F'_i \cap \gamma'_j = \phi$ for $i\neq j$, and also it is not guaranteed that $F'_1$ is a Seifert surface for $\gamma'_1$, since the band could have went through Seifert surfaces. We can fix this by altering Seifert surfaces. When the band goes through $F'_1$ we will take out two disks from $F'_1$ and attach a cylinder which connects the two circles as in Figure $11$. When the band goes through either $F'_2$ or $F'_3$, we will perform tubing as in Figure $12$. After all the alterations, it is guaranteed that $F'_i \cap \gamma'_j = \phi$ for $i\neq j$ and $F'_1$ is a Seifert surface for $\gamma'_1$. Since, we have not introduced any new triple intersection points, we have $\bar\mu_{\{\gamma_1, \gamma_2, \gamma_3 \}}(123) = \bar\mu_{\{\gamma'_1, \gamma'_2, \gamma'_3 \}}(123)$ as desired.

\begin{figure}[h]
\centering
\includegraphics[width=5.5in]{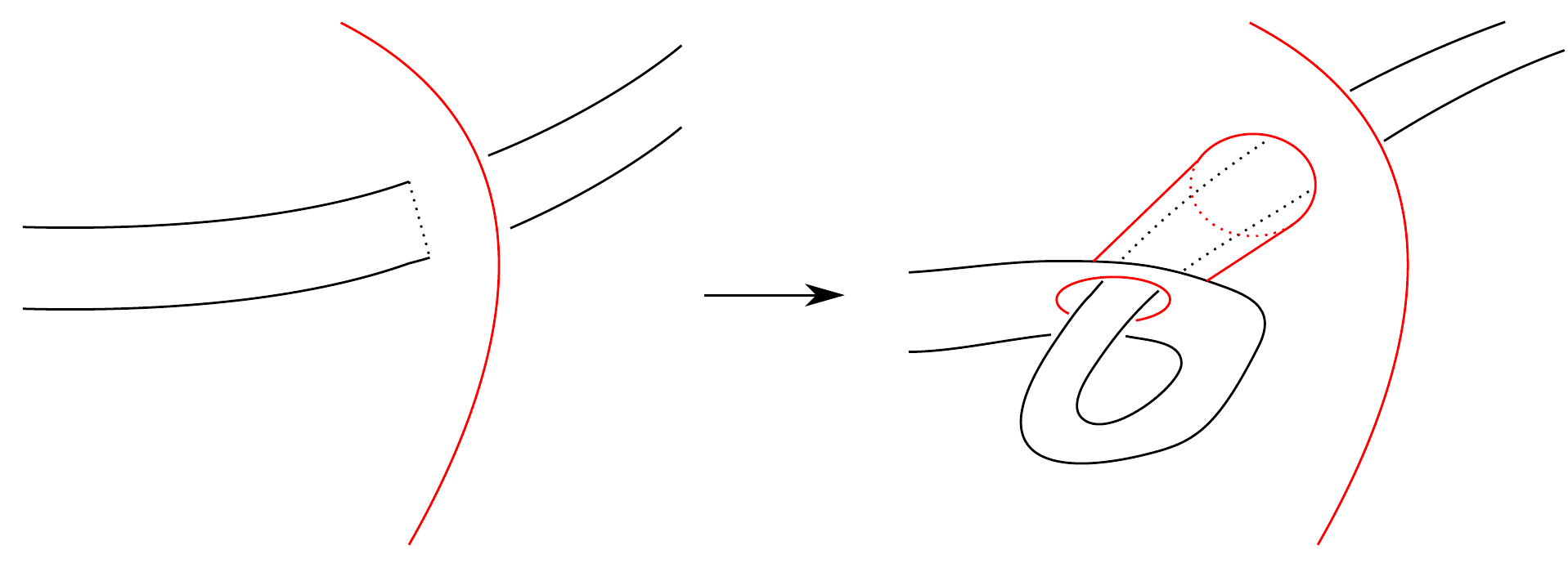}
\put(-5,1.03){band}
\put(-4.3,1.6){$F'_1$}
\put(-1.08,1.6){$F'_1$}
\put(-2.2,0.9){band}
\put(-1.9,1.45){cylinder}
\put(-3.1,0.7){alteration}

\caption{Alteration when $F'_1$ intersects the band}
\end{figure}

\begin{figure}[h]
\centering
\includegraphics[width=5.5in]{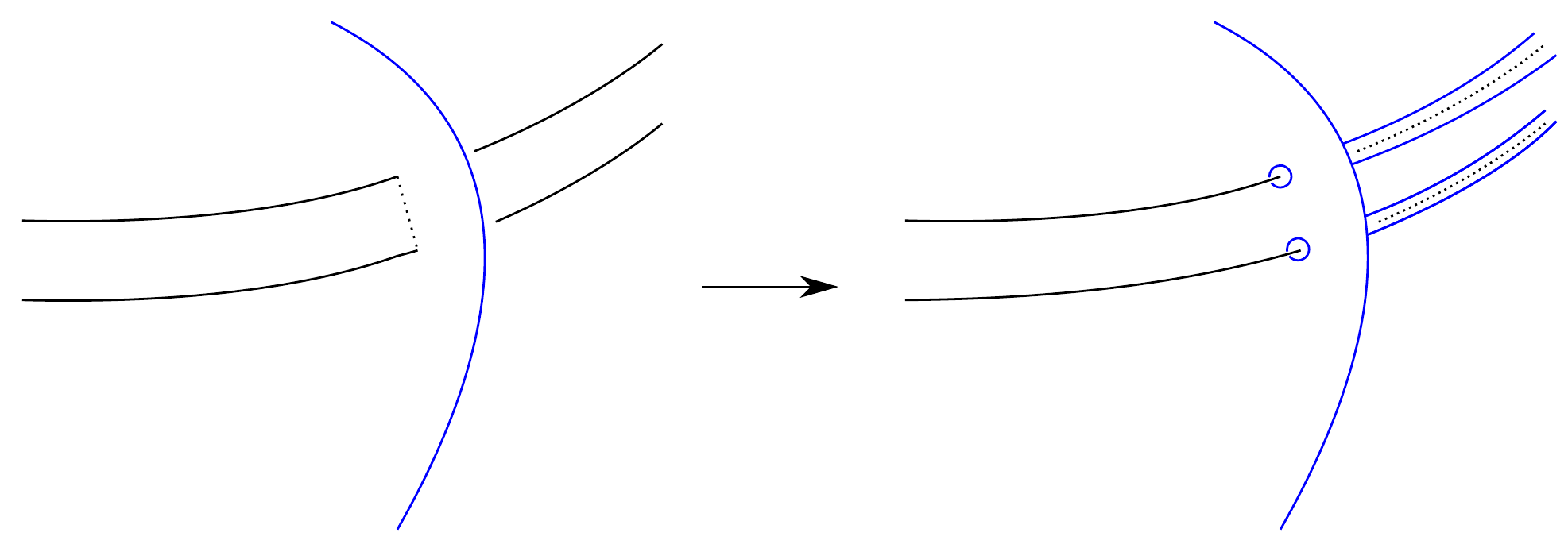}
\put(-5,1.03){band}
\put(-4.75,1.6){$F'_2$ or $F'_3$}
\put(-1.6,1.6){$F'_2$ or $F'_3$}
\put(-2,1){band}
\put(0,1.45){tube}
\put(-3.1,0.7){alteration}

\caption{Alteration when $F'_2$ or $F'_3$ intersects the band}
\end{figure}

Notice that for any two derivative $\{ \gamma_1, \gamma_2, \gamma_3 \}$ and $\{ \gamma'_1, \gamma'_2, \gamma'_3 \}$ associated to $H$, we can perform several moves as above (i.e. taking a parallel copy of one component and performing a band sum with other component on the Seifert surface $F$) with possibly changing orientations to get from one to the other. While performing such moves we observed that up to change of sign, their Milnor's triple linking number does not change. Hence, we can conclude that $S_{K,H} \subseteq S_{K,H,\{ b_1, b_2, b_3 \} } \cup -S_{K,H,\{ b_1, b_2, b_3 \} }$ as needed.
\end{proof}

Now, we state the main theorem.

\begin{theorem} Let $K$ be an algebraically slice knot with genus three Seifert surface $F$. Suppose $H$ is a metabolizer of $K$ and $\{ a_1, a_2, a_3, b_1, b_2, b_3 \}$ is a symplectic basis for $H_1(F)$ where $\spn(b_1,b_2,b_3) = H$. Let $$M = (m_{ij})_{6\times 6} =
 \begin{pmatrix}
  * & a & * & x_1 & * & y_1 \\
  a-1 & 0 & x_2 & 0 & y_2 & 0 \\
  * & x_2 & * & b & * & z_1 \\
  x_1 & 0 & b-1 & 0 & z_2 & 0 \\
  * & y_2 & * & z_2 & * & c \\
  y_1 & 0 & z_1 & 0 & c-1 & 0
 \end{pmatrix}$$ be the Seifert matrix with respect to the basis $\{ a_1, b_1, a_2, b_2, a_3, b_3 \}$. Then $S_{K,H}  \supseteq S_{K,H,\{ b_1, b_2, b_3 \} } \supseteq \{n\cdot((a-1)(b-1)(c-1)-abc+x_1x_2+y_1y_2+z_1z_2)|n\in \mathbb{Z} \}$.
\end{theorem}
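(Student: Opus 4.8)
The inclusion $S_{K,H}\supseteq S_{K,H,\{b_1,b_2,b_3\}}$ is immediate from the definitions, so the real content is the second inclusion. By Lemma $4.2$ the set $S_{K,H,\{b_1,b_2,b_3\}}$ depends only on the Seifert matrix $M$, so I would first replace $(K,F)$ by the disk-band model of Figure $2$ built directly from $M$, with $K=\partial F$, and take $L_{0}=\{\gamma_1,\gamma_2,\gamma_3\}$ to be the model derivative whose $i$-th component is the core of the $b_i$-band closed up by an arc in the base disk, so that $[\gamma_i]=b_i$ and the three pairwise linking numbers of $L_{0}$ vanish because the Seifert form vanishes on $H=\spn(b_1,b_2,b_3)$.

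The plan is then to construct a second derivative $L'=\{\gamma_1',\gamma_2',\gamma_3'\}$ of the \emph{same} knot $K$, associated with $H$ and with $[\gamma_i']=b_i$, obtained from $L_{0}$ by a composite of two double Borromean rings insertion moves on the bands of $F$ (Remark $2.8.(1)$), each supported in a ball meeting $F$ only in the interiors of the bands and hence disjoint from $\partial F=K$. Because these can be taken to be double delta moves (Remark $2.8.(2)$ together with the discussion at the end of Section $2$), $K$ is unchanged, the modified surface $F'$ is again a genus three Seifert surface for $K$ with Seifert matrix $M$, and its Seifert form still vanishes on $\spn([\gamma_1'],[\gamma_2'],[\gamma_3'])=H$; thus $L'$ is a derivative of $K$ associated with $H$ and the basis $\{b_1,b_2,b_3\}$. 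Since the homology classes of the components never change, the algebraic intersection numbers entering Lemma $3.1$ are the same whether computed against $L_{0}$ or against its images, so Lemma $3.1$ applies to each move.

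The remaining point is to choose the two multi-disks so that the net change produced by Lemma $3.1$ is exactly $(a-1)(b-1)(c-1)-abc+x_1x_2+y_1y_2+z_1z_2$. Writing
\[
P=\begin{pmatrix} a-1 & x_2 & y_2\\ x_1 & b-1 & z_2\\ y_1 & z_1 & c-1\end{pmatrix},\qquad
Q=\begin{pmatrix} a & x_1 & y_1\\ x_2 & b & z_1\\ y_2 & z_2 & c\end{pmatrix}
\]
for the two off-diagonal $3\times3$ blocks of $M$ (the linking numbers $\lk(b_i,a_j^{+})$ and $\lk(a_i,b_j^{+})$, respectively), I would route the first multi-disk along the positive push-offs $a_1^{+},a_2^{+},a_3^{+}$ of the dual curves so that $\varphi(E_i)$ meets $\gamma_j$ algebraically $P_{ji}$ times, and infect by the Borromean rings, changing $\bar{\mu}(123)$ by $\det P$; then route the second multi-disk along the negative push-offs $a_1^{-},a_2^{-},a_3^{-}$ so that $\varphi(E_i)$ meets $\gamma_j$ algebraically $Q_{ji}$ times, and infect by the \emph{reversed} Borromean rings, changing $\bar{\mu}(123)$ by $-\det Q$. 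Since $P=Q^{T}-I$ (the off-diagonal block of $M-M^{T}$ being the intersection form), a direct computation gives $\det P-\det Q=\det(Q-I)-\det Q=(a-1)(b-1)(c-1)-abc+x_1x_2+y_1y_2+z_1z_2$, so the composite move takes $\bar{\mu}_{L_{0}}(123)$ to $\bar{\mu}_{L'}(123)=\bar{\mu}_{L_{0}}(123)+\big((a-1)(b-1)(c-1)-abc+x_1x_2+y_1y_2+z_1z_2\big)$. Performing the composite $|n|$ times (and reversing all the Borromean rings when $n<0$) then realizes $n\cdot\big((a-1)(b-1)(c-1)-abc+x_1x_2+y_1y_2+z_1z_2\big)$ as a difference $\bar{\mu}_{L'}(123)-\bar{\mu}_{L_{0}}(123)$, which is exactly what is needed.

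The step I expect to be the main obstacle is the geometric bookkeeping of the previous paragraph: verifying that the multi-disks can genuinely be routed along the $a_j^{\pm}$ so that the intersection matrices are precisely $P$ and $Q$ (rather than only agreeing with them up to corrections one cannot control), and that the two resulting moves can be realized as double delta moves supported in balls disjoint from $\partial F$, so that both the knot $K$ and the metabolizer $H$ are genuinely preserved. The reduction to the explicit disk-band model via Lemma $4.2$ is what should make this verification manageable.
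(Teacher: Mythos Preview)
There is a genuine gap at the heart of your construction. You claim the two double Borromean rings insertions can be taken in balls disjoint from $K=\partial F$, so that $K$ is unchanged and $L'$ is still a derivative of $K$; your justification is that these are double delta moves. But this is exactly backwards: double delta moves on the bands of a Seifert surface (Remark~2.8 and the end of Section~2) are precisely moves that \emph{change} the knot while preserving the Seifert form---that is how Lemma~4.1 (which is the reduction lemma you want, not~4.2) uses them. Concretely, when the sub-disk $\varphi(E_i)$ meets the core $\gamma_j$ of a $b_j$-band transversely, in the natural realization it crosses the entire band and hence meets both edges of that band, which lie on $K$. To avoid $K$ geometrically you would need each $a_i^{\pm}$ to bound an embedded disk in $S^3\setminus K$, but these curves merely have linking number zero with $K$ and are not in general null-homotopic in its complement. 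So after your infections $L'$ lies on a Seifert surface for a \emph{different} knot $K'$ with the same Seifert matrix; the difference $\bar\mu_{L'}(123)-\bar\mu_{L_0}(123)$ is therefore not visibly an element of $S_{K,H,\{b_1,b_2,b_3\}}$, and Lemma~4.1 cannot rescue this, since it compares two derivatives of one knot with two derivatives of another, not one derivative of each.

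The paper's proof avoids this problem by never using infection to manufacture the second derivative. After reducing via Lemma~4.1 to the model knot of Figure~21 (whose obvious derivative is the unlink, with $\bar\mu=0$), it draws explicit simple closed curves $\gamma_{n,1},\gamma_{n,2},\gamma_{n,3}$ on the \emph{fixed} surface $\Sigma$ using planar templates $\widetilde\Sigma_n$ (Figures~22--27), checks directly that they represent $b_1,b_2,b_3$, and then computes $\bar\mu_{L_n}(123)$ by hand: build an obvious Seifert surface for $\gamma_{n,1}$, tube and cylinder it off the other components, and read the longitude in $\pi_2/\pi_3$ as a product of commutators via Proposition~2.3 and Seifert-matrix linking calculations. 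Your determinant identity $\det(Q^{T}-I)-\det Q=(a-1)(b-1)(c-1)-abc+x_1x_2+y_1y_2+z_1z_2$ is correct and is essentially what falls out of those computations, but the mechanism producing the second derivative is an explicit surface construction, not string link infection.
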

\begin{proof}
Let $K$ be a given algebraically slice knot. By Lemma $4.1$ we can assume $K$ is the knot described in Figure $21$ at the end of this paper, which has the simplest Seifert surface $\Sigma$ with Seifert matrix $M$. Notice that since $\{\gamma_1, \gamma_2, \gamma_3 \}$ is the unlink,  $\bar\mu_{\{\gamma_1, \gamma_2, \gamma_3 \}}(123) = 0$, where $\{\gamma_1, \gamma_2, \gamma_3 \}$ is the derivative of $K$ associated with $H$ described in Figure $21$.

Now, we will produce links $L_n$ which is a derivative associated with $H$ such that $\bar\mu_{L_n}(123) = n \cdot ((a-1)(b-1)(c-1)-abc+x_1x_2+y_1y_2+z_1z_2)$ for each positive integer $n$. We will start with the case when $n=1$. Let $\gamma_{1,1}, \gamma_{1,2}, \gamma_{1,3}$ be embedding of circles on the surface $\widetilde{\Sigma_1}$ as described in Figure $22,23,$ and $24$ at the end of this paper. We will denote $\gamma'_{1,i}$ as the intersection dual of $\gamma_{1,i}$ for $i=1,2,3$. Let $\phi : \widetilde{\Sigma_1} \rightarrow \Sigma$ be the obvious map which sends the core of $i$th band of $\widetilde{\Sigma_1}$ to the core of the $i$th band of $\Sigma$. By abusing notation we will denote image of $\gamma_{1,i}$ as $\gamma_{1,i}$ and image of $\gamma'_{1,i}$ as $\gamma'_{1,i}$, for $i=1,2,3$. Then notice that for $i=1,2,3$, $[\gamma'_{1,i}]=a_i$ and $[\gamma_{1,i}]=b_i$ in $H_1(\Sigma)$ and $\{\gamma_{1,1}, \gamma_{1,2}, \gamma_{1,3}\}$ is a derivative of $K$ associated with $H$. Let $L_1$ be the link $\{\gamma_{1,1}, \gamma_{1,2}, \gamma_{1,3}\} \subset S^3$ on the Seifert surface $\Sigma$.

Now, we will show that $\bar\mu_{L_1}(123) = ((a-1)(b-1)(c-1)-abc+x_1x_2+y_1y_2+z_1z_2)$. Notice that $\bar\mu_{L_1}(123)$ is equal to $\bar\mu_{\gamma_{1,2}, \gamma_{1,3}, \gamma_{1,1}}(123)$ by the definition so we will calculate $\bar\mu_{\gamma_{1,2}, \gamma_{1,3}, \gamma_{1,1}}(123)$ instead. Let $\pi$ be $\pi_1(S^3 - L_1)$, $\lambda_{1,1}, \lambda_{1,2}, \lambda_{1,3}$ be longitudes of $\gamma_{1,1}, \gamma_{1,2}, \gamma_{1,3}$ respectively, and $\mu_{1,1}, \mu_{1,2}, \mu_{1,3}$ be meridians of $\gamma_{1,1}, \gamma_{1,2}, \gamma_{1,3}$ respectively. Then $[\lambda_{1,1}] \in \pi_2$, since $\lk(\gamma_{1,i},\gamma_{1,j}) = 0$, for $i\neq j$. Hence by Proposition $2.2$, it is not necessary to specify the basing of $\lambda_{1,1}$, for the calculation of $\bar\mu_{\gamma_{1,2}, \gamma_{1,3}, \gamma_{1,1}}(123)$. Suppose $\lambda_{1,1}$ bounds a Seifert surface $\Sigma_1$ which does not intersect $\gamma_{1,2}$ and $\gamma_{1,3}$. For $i=1,2, \cdots, k_1$, where $k_1$ is the genus of $\Sigma_1$, let $\varphi_i$ and $\psi_i$ be core of the $2i-1$th band and $2i$th band respectively (see Figure $13$). For $i=1,2, \cdots, k_1$, let $c_i = [\varphi_i]$ and $d_i = [\psi_i]$ in $\pi_1(S^3 - L_1)$, then notice that $[\lambda_{1,1}] = \prod\limits_{i=1}^{k_1}{[c_i,d_i]} \in \pi_2 / \pi_3$. Then by Proposition $2.3$ we only need to calculate exponent sum of $[\mu_{1,2}]$ and $[\mu_{1,3}]$ occurring in $c_i$ and $d_i$, for all $i=1,2, \cdots, k_1$. Notice that basing of $\varphi_i$ and $\psi_i$ does not matter since we are only interested in their exponent sum of $[\mu_{1,2}]$ and $[\mu_{1,3}]$. Hence, we need to find a Seifert surface $\Sigma_1$ for $\lambda_{1,1}$ which does not intersect $\gamma_{1,2}$ and $\gamma_{1,3}$. We will use the obvious surface $\Sigma_1$ which bounds $\lambda_{1,1}$ and we will push it to the negative direction of $\Sigma$ (see Figure $14$). The problem with this surface is that it intersects $\gamma_{1,1}, \gamma_{1,2}$, and  $\gamma_{1,3}$. In order to fix this problem we need to perform several modifications, which are similar to the modifications performed in the proof of Lemma $4.2$, on the surface $\Sigma_1$.

\begin{figure}[h]
\centering
\includegraphics[width=5.5in]{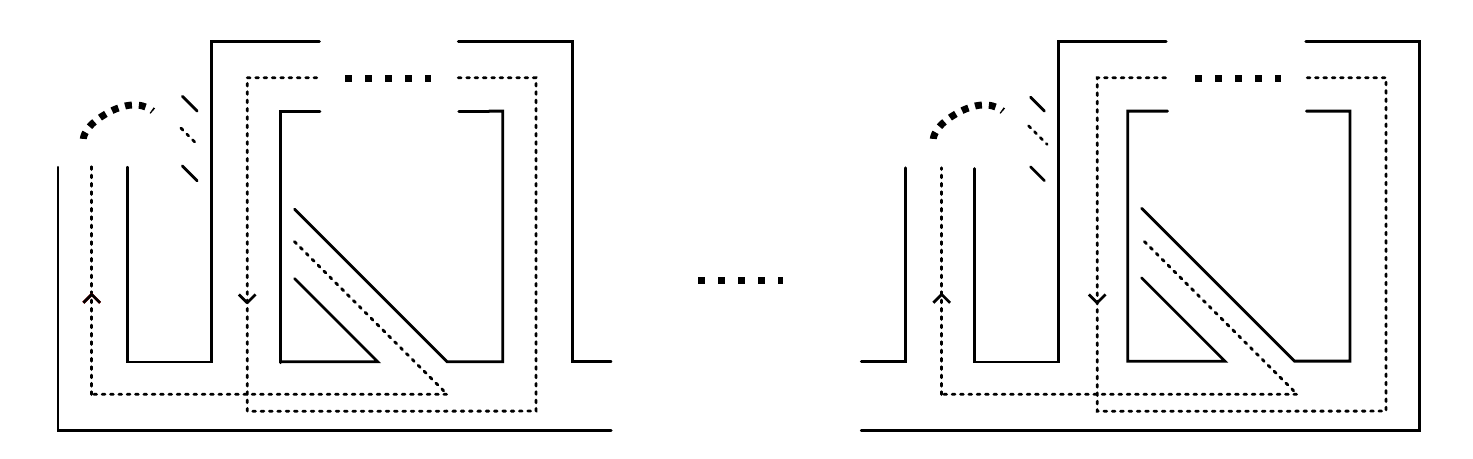}
\put(-5,-0.1){$\varphi_1$}
\put(-4,-0.1){$\psi_1$}
\put(-1.8,-0.1){$\varphi_{k_1}$}
\put(-0.8,-0.1){$\psi_{k_1}$}
\caption{Label of core of the band of Seifert surface $\Sigma_1$ for $\lambda_{1,1}$}
\end{figure}

\begin{figure}[h]
\centering
\includegraphics[width=5.5in]{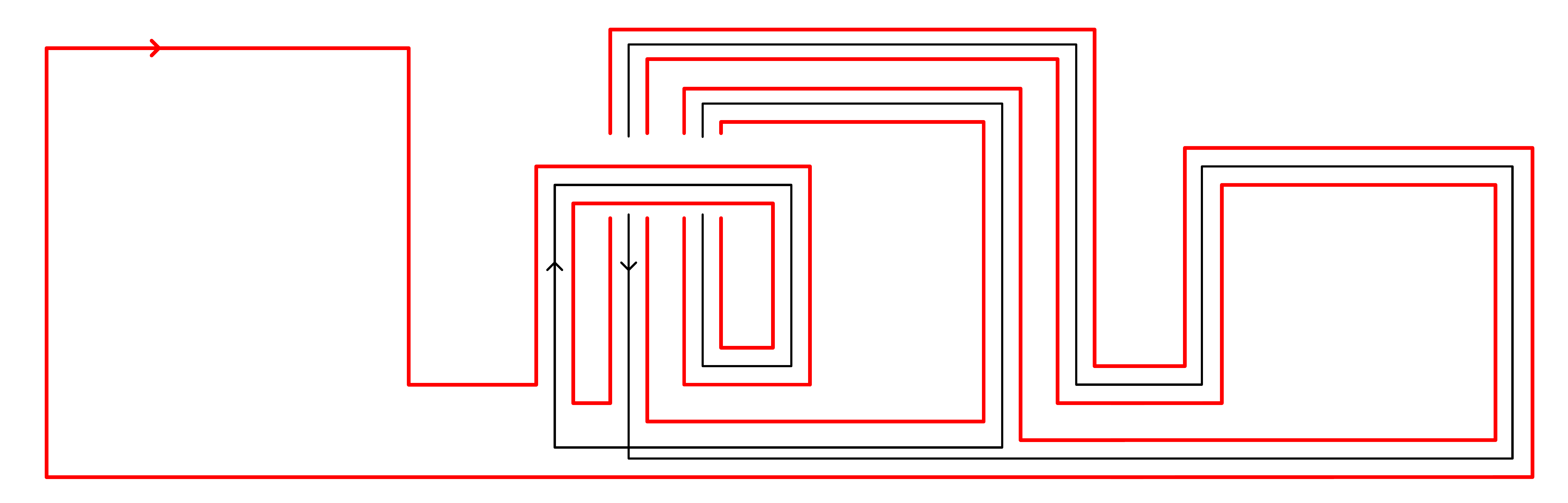}
\put(-3.55,1.5){$\psi_5$}
\put(-3.8,0.8){$\varphi_5$}
\caption{Seifert surface $\Sigma_1$ before modification and core of two bands $\varphi_5$ and $\psi_5$}
\end{figure}

First of all, $\Sigma_1$ intersects $\gamma_{1,1}$ only when the third band or the fifth band wraps around the second band. In this case we will drill out two disks from $\Sigma_1$ and connect them using a cylinder as in Figure $15$ to modify the surface $\Sigma_1$. Notice that whenever we do this we can use $\varphi$ in Figure $15$ as one of the core of the band. Since it has zero exponent sum of $[\mu_{1,2}]$ and $[\mu_{1,3}]$ occurring in $[\varphi]$, we do not have to worry about the case when $\Sigma_1$ intersects $\gamma_{1,1}$ for the calculation of $\bar\mu_{L_1}(123)$.

\begin{figure}[h]
\centering
\includegraphics[width=5.5in]{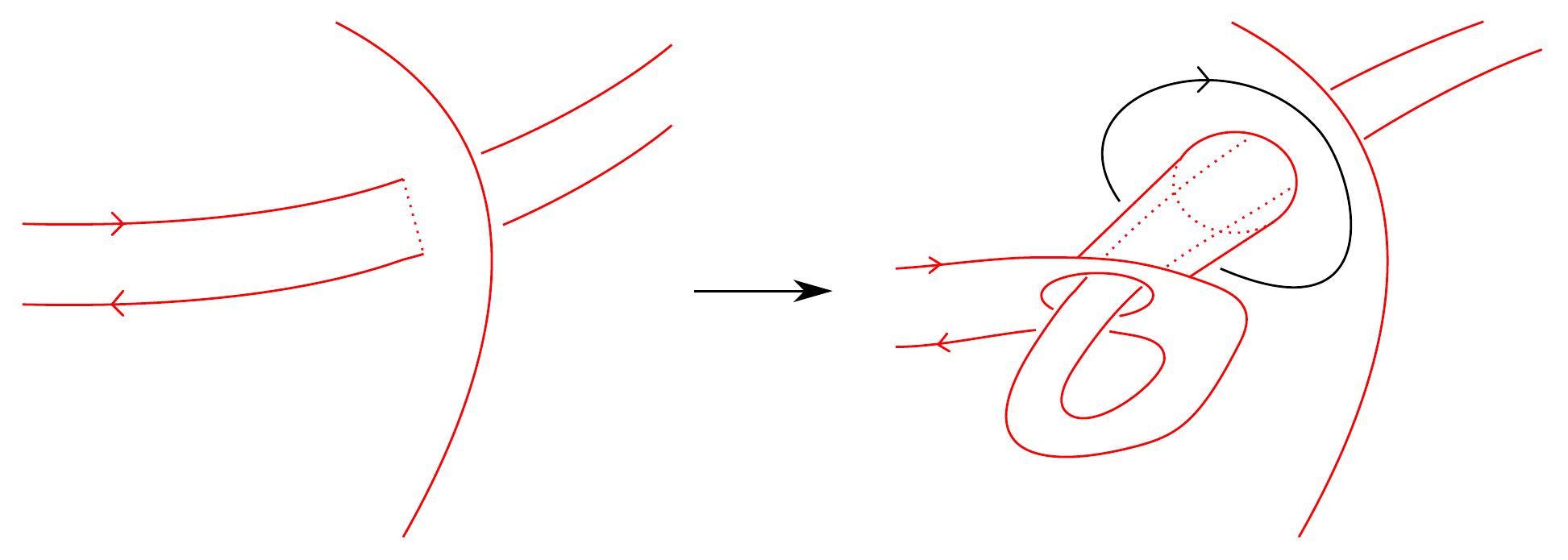}
\put(-5,1.3){$\gamma_{1,1}$}
\put(-4,1.8){$\Sigma_1$}
\put(-2.1,1.15){$\gamma_{1,1}$}
\put(-0.95,1.8){$\Sigma_1$}
\put(-1.5,1.7){$\varphi$}
\caption{Modification of $\Sigma_1$ when $\gamma_{1,1}$ intersects $\Sigma_1$}
\end{figure}

When the first band goes through the surface $\Sigma_1$, we will tube along the $\gamma_{1,2}$ and $\gamma_{1,3}$, where $\Sigma_1$ intersects $\gamma_{1,2}$ and $\gamma_{1,3}$, as in Figure $16$. We will let $\varphi_1, \varphi_2, \psi_1, \psi_2$ be the circles described in Figure $16$. Now, we need to calculate exponent sum of $[\mu_{1,2}]$ and $[\mu_{1,3}]$ occurring in $[\varphi_1], [\varphi_2], [\psi_1], [\psi_2]$. For $[\varphi_{1}]$, it has one exponent sum of $[\mu_{1,2}]$, and for $[\varphi_{2}]$, it has one exponent sum of $[\mu_{1,3}]$. Therefore, we only need to calculate exponent sum of $[\mu_{1,3}]$ for $[\psi_1]$, and $[\mu_{1,2}]$ for $[\psi_2]$. Notice that we can think of $\psi_1$ as positive push off of a circle on the Seifert surface $\Sigma$ which takes $-a_3$ value in $H_1(\Sigma)$ and we can think of $\psi_2$ as negative push off of a circle  on the Seifert surface $\Sigma$ which takes $b_1+a_2+b_2+b_3$ value in $H_1(\Sigma)$. Then $\psi_1$ has $(c-1)$ exponent sum of $[\mu_{1,3}]$ and $\psi_2$ has $-b$ exponent sum of $[\mu_{1,2}]$ by the following calculations :

\begin{equation*}
\begin{split}
   \lk(\psi_1,\gamma_{1,3})=& 
   \begin{pmatrix}
  0 & 0 & 0 & 0 & 0 & 1
 \end{pmatrix} \cdot 
 M \cdot
 {\begin{pmatrix}
 0 & 0 & 0 & 0 & -1 & 0 
 \end{pmatrix}}^T\\
 =& -(c-1)\\
 \lk(\psi_2,\gamma_{1,2})=& 
   \begin{pmatrix}
  0 & 1 & 1 & 1 & 0 & 1
 \end{pmatrix} \cdot 
 M \cdot
 {\begin{pmatrix}
 0 & 0 & 0 & 1 & 0 & 0 
 \end{pmatrix}}^T\\
 =& b.\\
\end{split}
\end{equation*}
Then in total we have $-(c-1)-b$ exponent sum of $[[\mu_{1,2}],[\mu_{1,3}]]$ in $[\varphi_1,\psi_1] \cdot [\varphi_2,\psi_2] \in \pi_2/\pi_3$, by Proposition $2.3$. Since the first band goes through the surface $(a-1)$ times in total we have $(a-1)(-(c-1)-b)$ effect on the Milnor's triple linking number. 

\begin{figure}[h]
\centering
\includegraphics[width=4.5in]{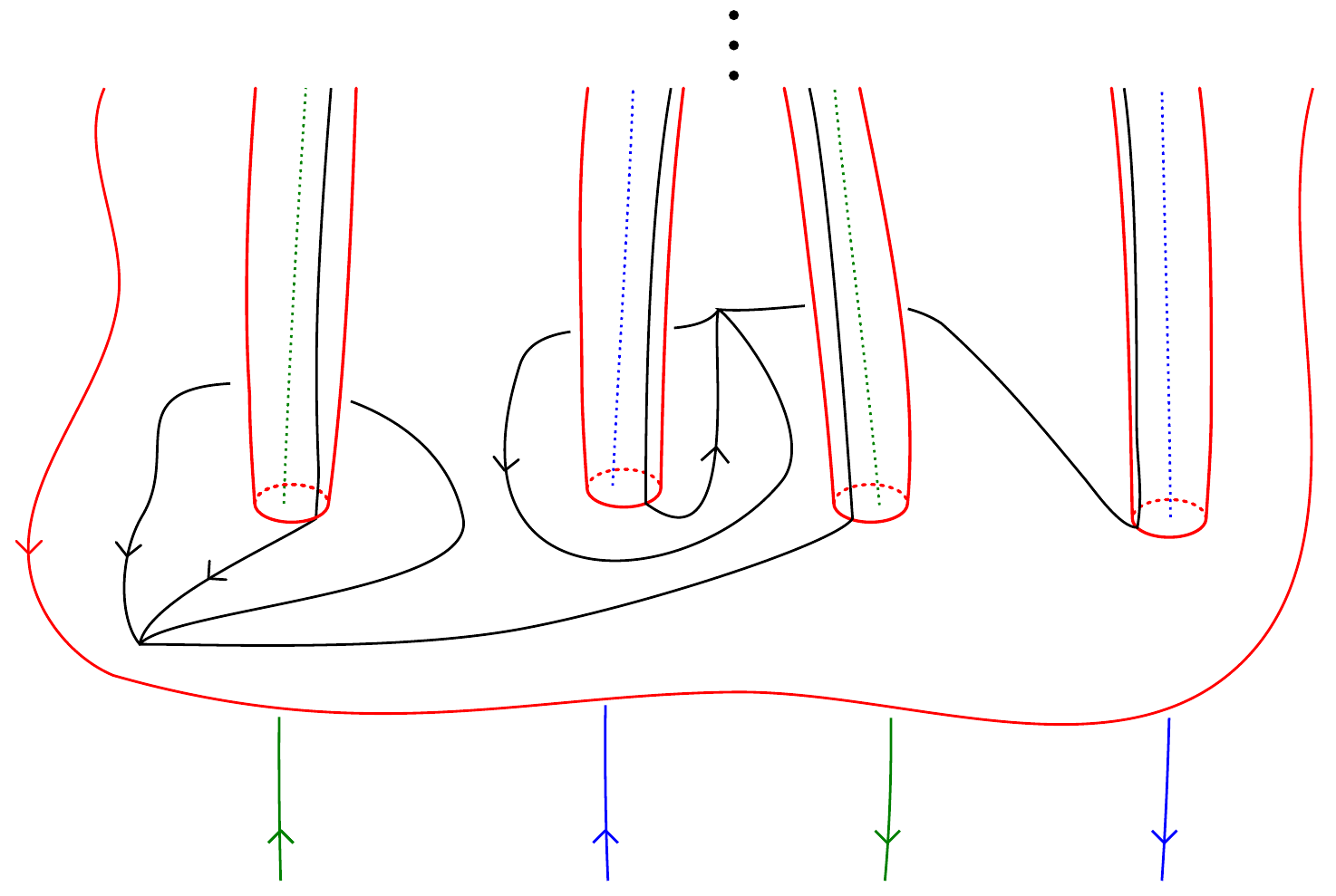}
\put(-4.4,2){$\Sigma_1$}
\put(-3,0.7){$\psi_1$}
\put(-4.2,1.6){$\varphi_1$}
\put(-3,1.7){$\varphi_2$}
\put(-1,1.7){$\psi_2$}
\put(-3.9,0.3){$\gamma_{1,2}$}
\put(-2.8,0.3){$\gamma_{1,3}$}
\caption{$\psi_1, \varphi_1, \psi_2,$ and $\varphi_2$}
\end{figure}

For the case where the third band goes through the surface $\Sigma_1$, we will use the same method. We will tube along the $\gamma_{1,3}$ as before (see Figure $17$). We will let $\varphi_3, \psi_3$ be the circles described in Figure $17$. Same as before we calculate exponent sum of $[\mu_{1,2}]$ and $[\mu_{1,3}]$ occurring in $[\varphi_3]$ and $[\psi_3]$, respectively. $[\varphi_{3}]$ has only one exponent sum of $[\mu_{1,3}]$, so for $[\psi_{3}]$ we only need to calculate exponent sum of $[\mu_{1,2}]$. We can think of $\psi_3$ as positive push off of a circle on the Seifert surface $\Sigma$ which takes $-a_1+b_1-b_3$ value in $H_1(\Sigma)$, hence $\psi_3$ has $-x_1$ exponent sum of $[\mu_{1,2}]$ by the following calculations :

\begin{equation*}
\begin{split}
   \lk(\psi_3,\gamma_{1,2})=& 
   \begin{pmatrix}
  0 & 0 & 0 & 1 & 0 & 0
 \end{pmatrix} \cdot 
 M \cdot
 {\begin{pmatrix}
 -1 & 1 & 0 & 0 & 0 & -1 
 \end{pmatrix}}^T\\
 =& -x_1\\
\end{split}
\end{equation*}
Then again by Proposition $2.3$, in total we have $x_1$ exponent sum of $[[\mu_{1,2}],[\mu_{1,3}]]$ in $[\varphi_3,\psi_3] \in \pi_2/\pi_3$. Since the third band goes through the surface $x_2$ times in total we have $x_1x_2$ effect on the Milnor's triple linking number. 

We use the same method for the case when the fifth band goes through the surface. We will tube along the $\gamma_{1,2}$ as before (see Figure $17$). We will let $\varphi_4, \psi_4$ be the circles described in Figure $17$. $[\varphi_{4}]$ has only one exponent sum of $[\mu_{1,2}]$, so for $[\psi_{4}]$ we only need to calculate exponent sum of $[\mu_{1,3}]$. We can think of $\psi_4$ as positive push off of a circle on the Seifert surface $\Sigma$ which takes $a_1-b_1+b_3$ value in $H_1(\Sigma)$ hence $\psi_4$ has $y_1$ exponent sum of $[\mu_{1,3}]$ by the following calculations :

\begin{equation*}
\begin{split}
   \lk(\psi_3,\gamma_{1,2})=& 
   \begin{pmatrix}
  0 & 0 & 0 & 0 & 0 & 1
 \end{pmatrix} \cdot 
 M \cdot
 {\begin{pmatrix}
 1 & -1 & 0 & 0 & 0 & 1 
 \end{pmatrix}}^T\\
 =& y_1\\
\end{split}
\end{equation*}
Then again by Proposition $2.3$, in total we have $y_1$ exponent sum of $[[\mu_{1,2}],[\mu_{1,3}]]$ in $[\varphi_4,\psi_4] \in \pi_2/\pi_3$. Since the third band goes through the surface $y_2$ times in total we have $y_1y_2$ effect on the Milnor's triple linking number. 

\begin{figure}[h]
\centering
\includegraphics[width=3in]{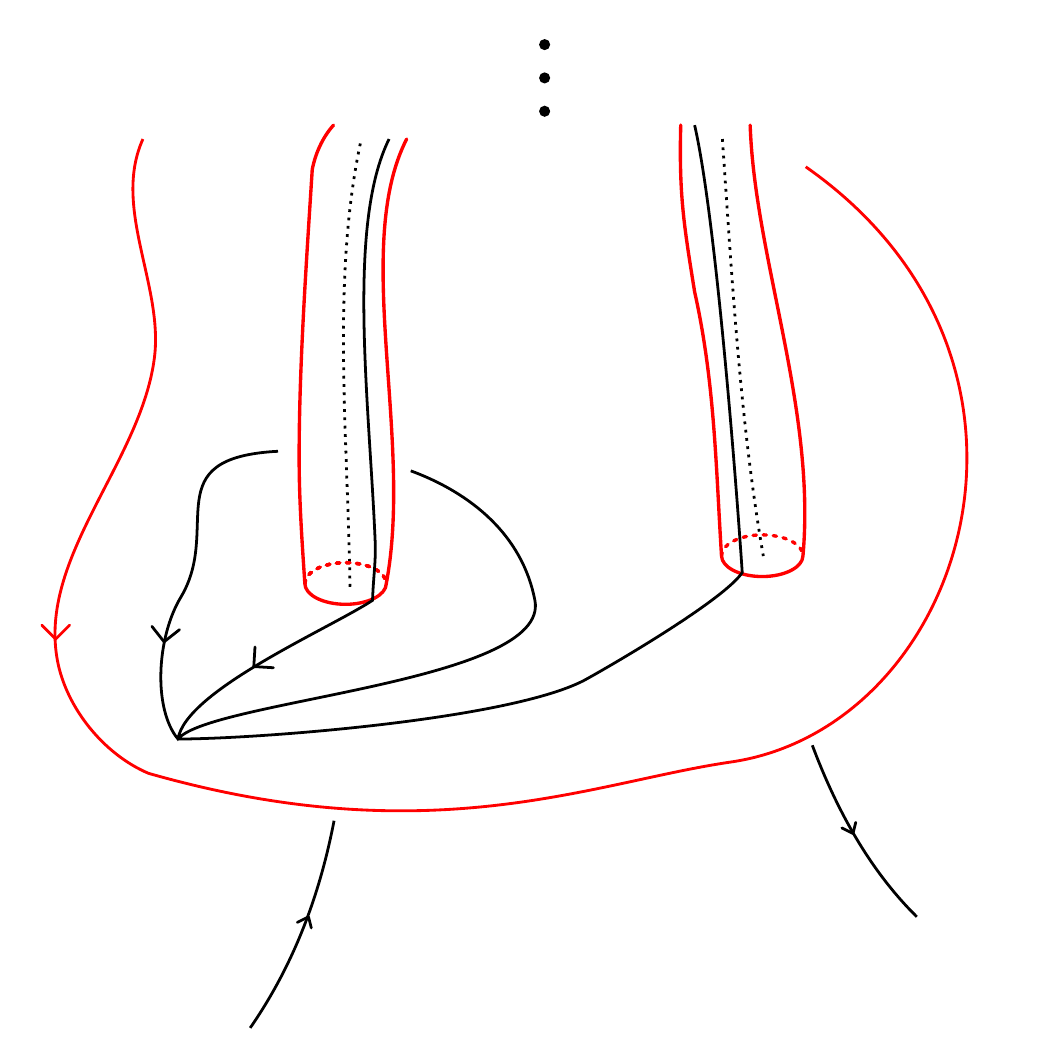}
\put(-2.8,2){$\Sigma_1$}
\put(-1.3,1){$\psi_i$}
\put(-1.6,1.6){$\varphi_i$}
\put(-2,0.3){$\gamma_{1,2}$ or $\gamma_{1,3}$}
\caption{$\psi_i$ and $\varphi_i$ for $i=3,4$}
\end{figure}

In addition, we have $\varphi_5,\psi_5$, which is the core of the two bands of $\Sigma_1$ before the modifications (see Figure $14$). We have pushed the surface $\Sigma_1$ towards the negative directions hence we can think of $\varphi_5$ as negative push off of a circle on the Seifert surface $\Sigma$ which takes $a_2+b_2$ value in $H_1(\Sigma)$ and we can think of $\psi_5$ as negative push off of a circle  on the Seifert surface $\Sigma$ which takes $-b_2-a_3$ value in $H_1(\Sigma)$. Then $\varphi_5$ has $b$ exponent sum of $[\mu_{1,2}]$ and $z_1$ exponent sum of $[\mu_{1,3}]$, and $\psi_5$ has $-z_2$ exponent sum of $[\mu_{1,2}]$ and $-c$ exponent sum of $[\mu_{1,3}]$ by the following calculations : 
\begin{equation*}
\begin{split}
   \lk(\varphi_5,\gamma_{1,2})=& 
   \begin{pmatrix}
  0 & 0 & 1 & 1 & 0 & 0
 \end{pmatrix} \cdot 
 M \cdot
 {\begin{pmatrix}
 0 & 0 & 0 & 1 & 0 & 0 
 \end{pmatrix}}^T\\
 =& b\\
 \lk(\varphi_5,\gamma_{1,3})=& 
   \begin{pmatrix}
  0 & 0 & 1 & 1 & 0 & 0
 \end{pmatrix} \cdot 
 M \cdot
 {\begin{pmatrix}
 0 & 0 & 0 & 0 & 0 & 1 
 \end{pmatrix}}^T\\
 =& z_1\\
 \lk(\psi_5,\gamma_{1,2})=& 
   \begin{pmatrix}
  0 & 0 & 0 & -1 & -1 & 0
 \end{pmatrix} \cdot 
 M \cdot
 {\begin{pmatrix}
 0 & 0 & 0 & 1 & 0 & 0 
 \end{pmatrix}}^T\\
 =& -z_2\\
 \lk(\psi_5,\gamma_{1,3})=& 
   \begin{pmatrix}
  0 & 0 & 0 & -1 & -1 & 0
 \end{pmatrix} \cdot 
 M \cdot
 {\begin{pmatrix}
 0 & 0 & 0 & 0 & 0 & 1 
 \end{pmatrix}}^T\\
 =& -c.\\
\end{split}
\end{equation*}
Using Proposition $2.3$, in total we have $-bc+z_1z_2$ exponent sum of $[[\mu_{1,2}],[\mu_{1,3}]]$ in $[\varphi_5,\psi_5] \in \pi_2/\pi_3$. 
If we total all the effects we get $(a-1)(-(c-1)-b)+x_1x_2+y_1y_2-bc+z_1z_2 = (a-1)(b-1)(c-1)-abc+x_1x_2+y_1y_2+z_1z_2$ as desired.

Now for integer $n$ greater than $1$, we will produce a link $L_n$ which is a derivative associated with $H$ such that $\bar\mu_{L_n}(123) = n \cdot ((a-1)(b-1)(c-1)-abc+x_1x_2+y_1y_2+z_1z_2)$. First we will describe $\widetilde{\Sigma_n}$ and $\gamma_{n,1}, \gamma_{n,2}, \gamma_{n,3}$ embedding of circles on $\widetilde{\Sigma_n}$. In order to do so we will start with $\gamma_{1,1}, \gamma_{1,2}, \gamma_{1,3}$ embedded in $\widetilde{\Sigma_1}$ and we will make some modifications to it. We will use Figure $22, 23$, and $24$ to describe the modifications. Let $\widetilde{\Sigma_n}$ be the same surface as $\widetilde{\Sigma_1}$. For Figure $22$, and Figure $24$ we will take $n$ parallel copies of green curves without changing anything else. For Figure $23$ we will alter the core of the bands which bound $\gamma_{1,1}$. The cores will wrap around the third and the fourth band of $\widetilde{\Sigma_n}$ as described in Figure $18$. Further inside the red band we will make some alterations as in Figure $19$. We will denote red circle, green circle, and blue circle on $\widetilde{\Sigma_n}$  as $\gamma_{n,1}, \gamma_{n,2},$ and $\gamma_{n,3}$ respectively. Also, we will denote $\gamma'_{n,i}$ as the intersection dual of $\gamma_{n,i}$, for $i=1,2,3$. For the convenience of readers we have presented $\gamma_{2,1}, \gamma_{2,2}, \gamma_{2,3}$ in Figure $25, 26,$ and $27$ at the end of this paper. As before let $\phi_n : \widetilde{\Sigma_n} \rightarrow \Sigma$ be the obvious map which sends the core of $i$th band of $\widetilde{\Sigma_n}$ to the core of the $i$th band of $\Sigma$. By abusing notation we will denote image of $\gamma_{n,i}$ under $\phi_n$ as $\gamma_{n,i}$ and image of $\gamma'_{n,i}$ under $\phi_n$ as $\gamma'_{n,i}$ for $i=1,2,3$. Then it is easy to check that for $i=1,2,3$, $[\gamma'_{n,i}]=a_i$ and $[\gamma_{n,i}]=b_i$ in $H_1(\Sigma)$ and $\{\gamma_{n,1}, \gamma_{n,2}, \gamma_{n,3}\}$ is a derivative of $K$ associated with $H$. As before we will let $L_n$ be the link $\{\gamma_{n,1}, \gamma_{n,2}, \gamma_{n,3}\} \subset S^3$ on the Seifert surface $\Sigma$. We will denote the obvious surface that $\gamma_{n,1}$ bounds by $\Sigma_n$. 

\begin{figure}[h]
\centering
\includegraphics[width=2.5in]{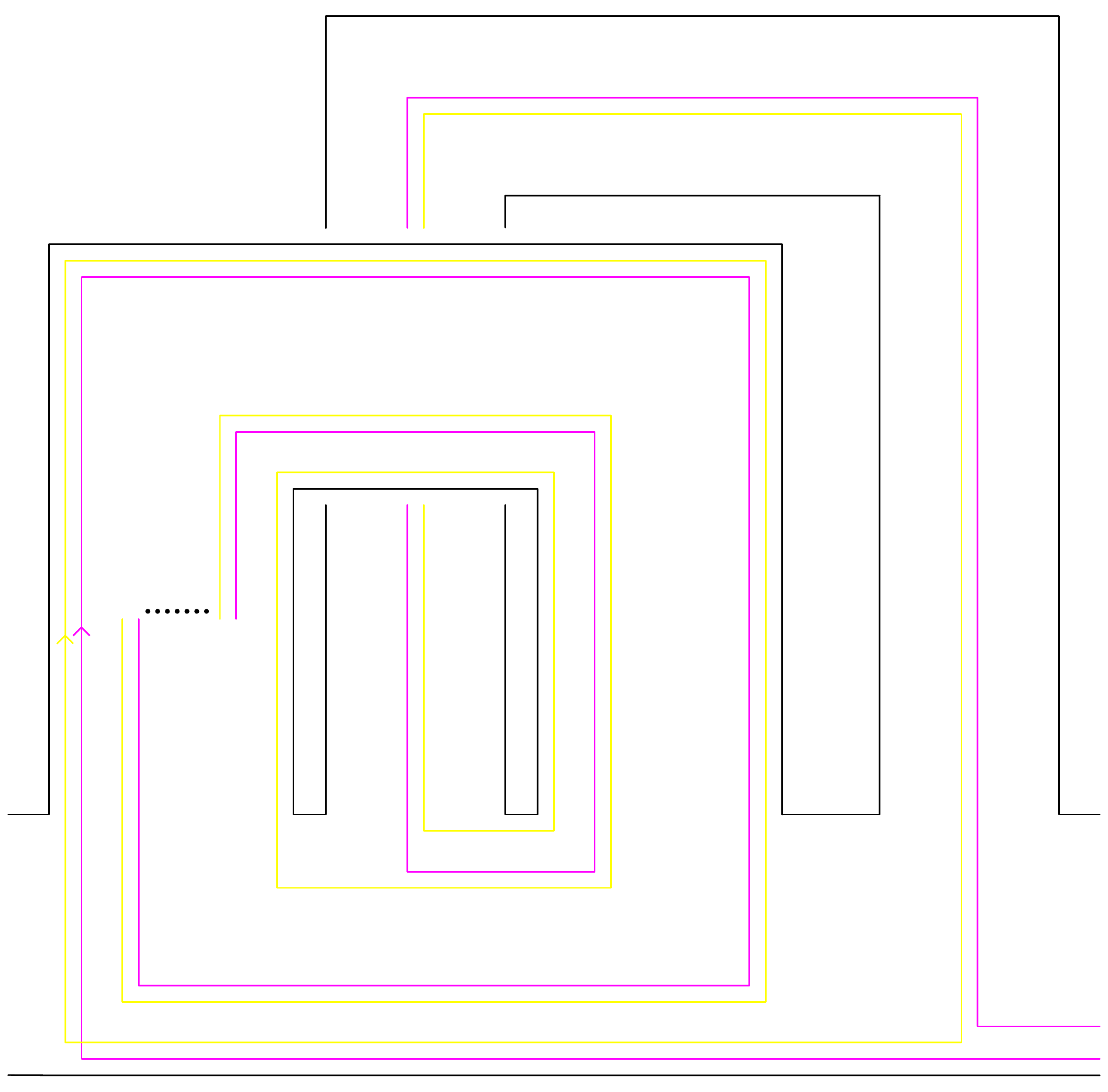}
\put(-2.3,1.17){\scriptsize{$n-2$}}
\caption{The yellow curve represents core of the first band $\varphi_{n,k_n}$ and the purple curve represents core of the second band $\psi_{n,k_n}$.}
\end{figure}

\begin{figure}[h]
\centering
\includegraphics[width=2.5in]{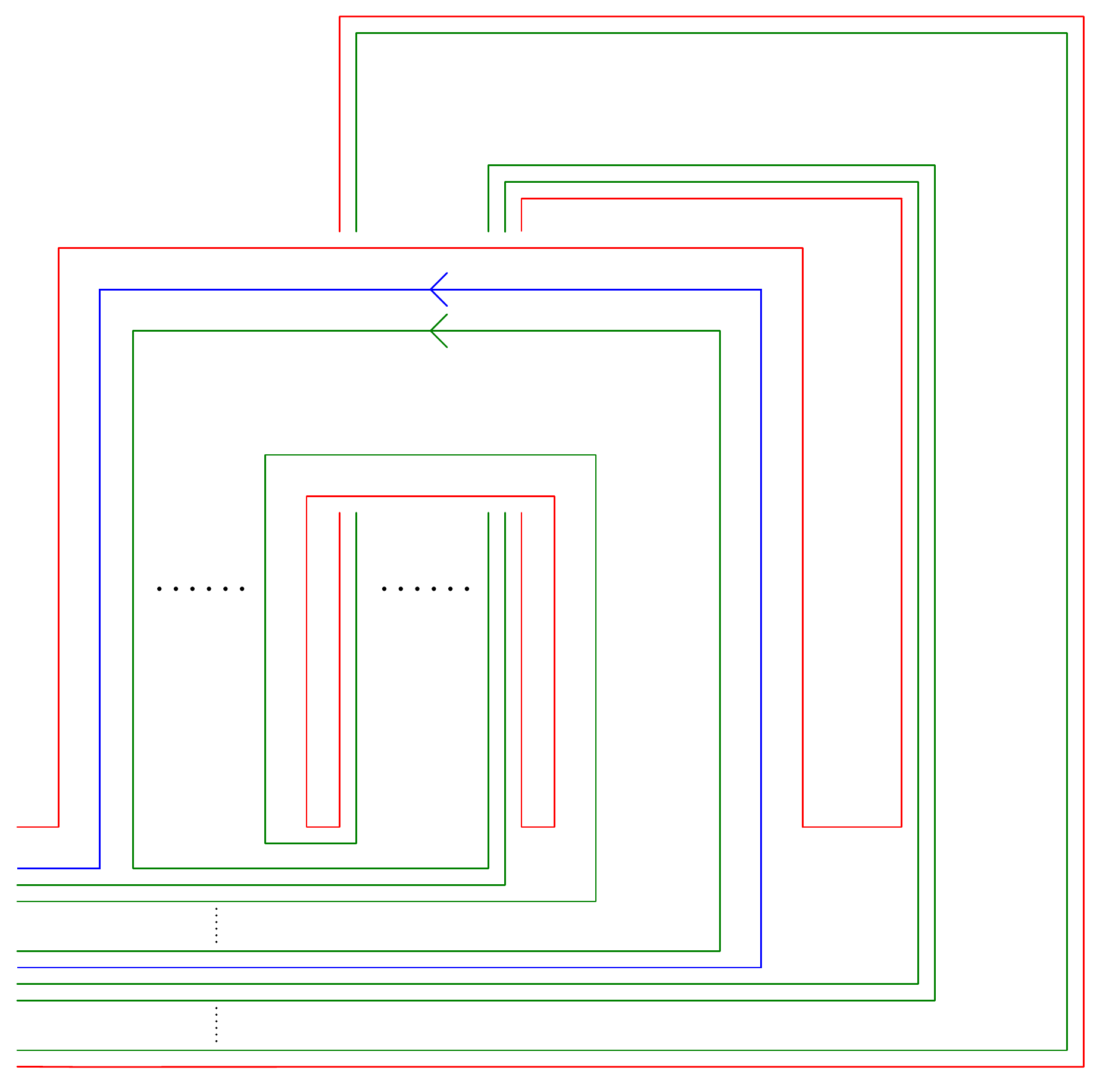}
\put(-2.18,1.17){\scriptsize{$n-1$}}
\caption{The green curves represents $\gamma_{n,2}$ and the blue curve represents $\gamma_{n,3}$ inside two bands which bounds $\gamma_{n,1}$.}
\end{figure}

For the calculation of $\bar\mu_{L_n}(123)$ we will omit some of the details since they are very similar to the case when $n=1$. We will modify $\Sigma_n$ so that it does not intersect $\gamma_{n,1}, \gamma_{n,2},$ and $\gamma_{n,3}$. As before we will not worry about the case when $\Sigma_n$ intersects $\gamma_{n,1}$. When the first band goes through $\Sigma_n$, since we have taken $n$ parallel copies of green curves the total effect by green curves on Milnor's triple linking number should be multiplied by $n$. Also since the blue curve, which we denote it by $\psi_{n,2}$ (which was $\psi_2$ for the case when $n=1$), represents $b_1+n\cdot a_2+b_2+b_3$ in $H_1(\Sigma)$, we have the following calculation:
\begin{equation*}
\begin{split}
 \lk(\psi_{n,2},\gamma_{1,2})=& 
   \begin{pmatrix}
  0 & 1 & n & 1 & 0 & 1
 \end{pmatrix} \cdot 
 M \cdot
 {\begin{pmatrix}
 0 & 0 & 0 & 1 & 0 & 0 
 \end{pmatrix}}^T\\
 =& n\cdot b.\\
\end{split}
\end{equation*}
This tells us that the total effect by the blue curve on Milnor's triple linking number should be multiplied by $n$ also. Hence in total we have $n\cdot (a-1)(-(c-1)-b)$ effect on the Milnor's triple linking number. It is easy to see that the effect of the third band and the fifth band going through the surface $\Sigma_n$ also needs to be multiplied by $n$, hence we have $n\cdot x_1x_2 + n\cdot y_1y_2$.

Lastly we have two more bands of $\Sigma_n$ before any modifications, $\varphi_{n,k_n}$ and $\psi_{n,k_n}$ where $k_n$ is genus of $\Sigma_n$. Again, we can think of $\varphi_{n,k_n}$ as negative push off of a circle on the Seifert surface $\Sigma$ which takes $n\cdot a_2+b_2$ value in $H_1(\Sigma)$ and we can think of $\psi_{n,k_n}$ as negative push off of a circle  on the Seifert surface $\Sigma$ which takes $-(n-1)\cdot a_2-b_2-a_3$ value in $H_1(\Sigma)$. Hence we have, $\varphi_{n,k_n}$ has $n\cdot b$ exponent sum of $[\mu_{1,2}]$ and $n \cdot z_1$ exponent sum of $[\mu_{1,3}]$, and $\psi_{n,k_n}$ has $-(n-1)\cdot b-z_2$ exponent sum of $[\mu_{1,2}]$ and $-(n-1)\cdot z_1-c$ exponent sum of $[\mu_{1,3}]$ by the following calculations : 
\begin{equation*}
\begin{split}
   \lk(\varphi_{n,k_n},\gamma_{1,2})=& 
   \begin{pmatrix}
  0 & 0 & n & 1 & 0 & 0
 \end{pmatrix} \cdot 
 M \cdot
 {\begin{pmatrix}
 0 & 0 & 0 & 1 & 0 & 0 
 \end{pmatrix}}^T\\
 =& n\cdot b\\
 \lk(\varphi_{n,k_n},\gamma_{1,3})=& 
   \begin{pmatrix}
  0 & 0 & n & 1 & 0 & 0
 \end{pmatrix} \cdot 
 M \cdot
 {\begin{pmatrix}
 0 & 0 & 0 & 0 & 0 & 1 
 \end{pmatrix}}^T\\
 =& n\cdot z_1\\
 \lk(\psi_{n,k_n},\gamma_{1,2})=& 
   \begin{pmatrix}
  0 & 0 & -(n-1) & -1 & -1 & 0
 \end{pmatrix} \cdot 
 M \cdot
 {\begin{pmatrix}
 0 & 0 & 0 & 1 & 0 & 0 
 \end{pmatrix}}^T\\
 =& -(n-1)\cdot b-z_2\\
 \lk(\psi_{n,k_n},\gamma_{1,3})=& 
   \begin{pmatrix}
  0 & 0 & -(n-1) & -1 & -1 & 0
 \end{pmatrix} \cdot 
 M \cdot
 {\begin{pmatrix}
 0 & 0 & 0 & 0 & 0 & 1 
 \end{pmatrix}}^T\\
 =& -(n-1)\cdot z_1 -c.\\
\end{split}
\end{equation*}
In total, we have $n\cdot(-bc+z_1z_2)$ exponent sum of $[[\mu_{1,2}],[\mu_{1,3}]]$ in $[\varphi_{n,k_n},\psi_{n,k_n}] \in \pi_2/\pi_3$, by Proposition $2.3$. 
If we total all the effects, we get $n\cdot ((a-1)(b-1)(c-1)-abc+x_1x_2+y_1y_2+z_1z_2)$ as desired.

Even though we have not covered the case when $n$ is a negative integer, this can be easily handled. On $\Sigma$ imagine we are sliding the third and the fourth band to the right so they pass the fifth and the sixth band. Then imagine mapping $\Sigma_n$ to it so that now the green circle represents $b_3$ in $H_1(\Sigma)$ and the blue circle represents $b_2$ in $H_1(\Sigma)$. Then we can follow the same calculations but since the roles of the green circle and the blue circle have been switched, we get the desired equation for negative integers.
\end{proof}

\begin{remark} Notice that once we fix a metabolizer $H$ of a knot, the set $\{n\cdot((a-1)(b-1)(c-1)-abc+x_1x_2+y_1y_2+z_1z_2)|n\in \mathbb{Z} \}$ does not depend on the choice of a basis of $H$ and the choice of a symplectic basis for $H_1(F)$ which extends that basis of $H$. This can be easily seen by considering a Seifert matrix $M' =
 \begin{pmatrix}
  A & B \\
  B^\top - Id & 0 
 \end{pmatrix}$ with respect to the basis $\{ a_1, a_2, a_3, b_1, b_2, b_3 \}$. Then notice that $(a-1)(b-1)(c-1)-abc+x_1x_2+y_1y_2+z_1z_2 = \det(B^\top - Id) - \det(B)$. If we pick a different basis, $\{\widetilde{b_1}, \widetilde{b_2}, \widetilde{b_3} \}$, for $H$ and extend it to a symplectic basis, $\{ \widetilde{a_1}, \widetilde{a_2}, \widetilde{a_3},\widetilde{b_1}, \widetilde{b_2}, \widetilde{b_3} \}$, for $H_1(F)$, then we get the Seifert matrix $\widetilde{M'} =
 \begin{pmatrix}
  \widetilde{A} & P^\top_2BP_1 \\
  P^\top_1(B^\top - Id)P_2 & 0 
 \end{pmatrix}$ with respect to the basis $\{ \widetilde{a_1}, \widetilde{a_2}, \widetilde{a_3},\widetilde{b_1}, \widetilde{b_2}, \widetilde{b_3} \}$, where $P_1$ and $P_2$ are change of basis matrices. Hence, we have $\det(P^\top_1(B^\top - Id)P_2) - \det(P^\top_2BP_1) = \pm \det(B^\top - Id) - \det(B)$, so we can conclude that the choice of a basis does not have an effect on the set $\{n\cdot((a-1)(b-1)(c-1)-abc+x_1x_2+y_1y_2+z_1z_2)|n\in \mathbb{Z} \}$.
\end{remark}

From this main theorem we have the first corollary which immediately follows from Theorem $4.3$. This corollary tells us that even the derivatives of the unknot have complicated Milnor's triple linking number.

\begin{corollary} Let $U$ be the unknot with a Seifert surface $F$ given in Figure $20$. For $i=1,2,3$, let $\alpha_i$ be the core of the $(2i-1)$th band and $\beta_i$ be the core of the $(2i)$th band. Let $H := \spn([\beta_1], [\beta_2], [\beta_3])$, then $S_{U,H} = \mathbb{Z}$.
\end{corollary}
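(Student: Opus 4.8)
The plan is to deduce this statement directly from Theorem~4.3, so the work amounts to a concrete computation with the surface in Figure~20. First I would read off a Seifert matrix $M$ for $U$ from the disk--band picture in Figure~20, taken with respect to the basis $\{\alpha_1,\beta_1,\alpha_2,\beta_2,\alpha_3,\beta_3\}$ coming from the cores of the six bands. Because the $\beta_i$ are the cores of the even bands and (in the picture) these bands are mutually unlinked and $0$-framed, the $3\times 3$ block of $M$ indexed by $\{\beta_1,\beta_2,\beta_3\}$ is the zero block; in particular $\beta_F$ vanishes on $H=\spn([\beta_1],[\beta_2],[\beta_3])$, so $H$ is genuinely a metabolizer of $U$ and $\{\beta_1,\beta_2,\beta_3\}$ is a derivative of $U$ associated with $H$. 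One should also record that $F$ does bound the unknot: each pair of bands $(2i-1,2i)$ is a trivial handle, so $F$ destabilizes three times to a disk. Rearranging the symplectic basis into the order $\{\alpha_1,\beta_1,\alpha_2,\beta_2,\alpha_3,\beta_3\}$, the matrix $M$ then has exactly the shape required by the hypothesis of Theorem~4.3 for explicit integers $a,b,c,x_1,x_2,y_1,y_2,z_1,z_2$ read off from the figure.

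Next I would evaluate $(a-1)(b-1)(c-1)-abc+x_1x_2+y_1y_2+z_1z_2$ for those values. By Remark~4.4 this equals $\det(B^\top - Id)-\det(B)$, where $B$ is the off-diagonal block of the symplectic-basis Seifert matrix of $F$; Figure~20 is arranged so that this quantity is $\pm 1$. (For the standard ``disk with three trivial handles'' one has $a=b=c=1$ and $x_i=y_i=z_i=0$, giving value $-1$ outright.) Theorem~4.3 then yields
\[ S_{U,H}\ \supseteq\ \{\, n\cdot(\pm 1)\ \mid\ n\in\mathbb{Z}\,\}\ =\ \mathbb{Z}. \]

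For the reverse inclusion, observe that every derivative $\{\gamma_1,\gamma_2,\gamma_3\}$ of $U$ associated with $H$ is a three-component link with pairwise linking number zero, so $\bar{\mu}_{\{\gamma_1,\gamma_2,\gamma_3\}}(123)$ is a well-defined integer, hence so is every difference of two such; therefore $S_{U,H}\subseteq\mathbb{Z}$. Combining the two inclusions gives $S_{U,H}=\mathbb{Z}$.

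The only non-formal part of the argument is the bookkeeping hidden in ``read off $M$ from Figure~20'': one must verify that the picture really is a Seifert surface for the unknot, that its band cores give the stated basis with $\spn([\beta_i])$ a metabolizer, and that the resulting Seifert matrix has the special form of Theorem~4.3 with the displayed determinant equal to $\pm 1$. Once the figure is fixed this is a finite elementary check, and it is where all the content of the corollary lies; everything afterward is a one-line appeal to Theorem~4.3 together with the integrality of Milnor's triple linking number.
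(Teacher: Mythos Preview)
Your argument is correct and is essentially the same as the paper's: compute the Seifert matrix from Figure~20 (with $a=b=c=1$ and all $x_i,y_i,z_i=0$), plug into Theorem~4.3 to get $(a-1)(b-1)(c-1)-abc=-1$, and conclude $S_{U,H}\supseteq\mathbb{Z}$, the reverse inclusion being trivial since $S_{U,H}$ is by definition a set of integers. The only cosmetic difference is that the paper writes down the explicit $6\times 6$ matrix directly rather than appealing to Remark~4.4.
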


\begin{proof}
Let $a_i=[\alpha_i] \in H_1(F)$ and $b_i=[\beta_i] \in H_1(F)$ for $i=1,2,3$. Then we have $$M =
 \begin{pmatrix}
  0 & 1 & 0 & 0 & 0 & 0 \\
  0 & 0 & 0 & 0 & 0 & 0 \\
  0 & 0 & 0 & 1 & 0 & 0 \\
  0 & 0 & 0 & 0 & 0 & 0 \\
  0 & 0 & 0 & 0 & 0 & 1 \\
  0 & 0 & 0 & 0 & 0 & 0
 \end{pmatrix}$$ a Seifert matrix with respect to the basis $\{a_1, b_1, a_2, b_2, a_3, b_3 \}$ and $H=\spn(b_1,b_2,b_3)$ is a metabolizer. By Theorem $4.3$ we have $S_{U,H} \supseteq \{n\cdot(0\cdot0\cdot0-1\cdot1\cdot1)|n\in \mathbb{Z} = \mathbb{Z}\} = \mathbb{Z}$ as desired.
\end{proof}

\begin{figure}[h]
\centering
\includegraphics[width=6in]{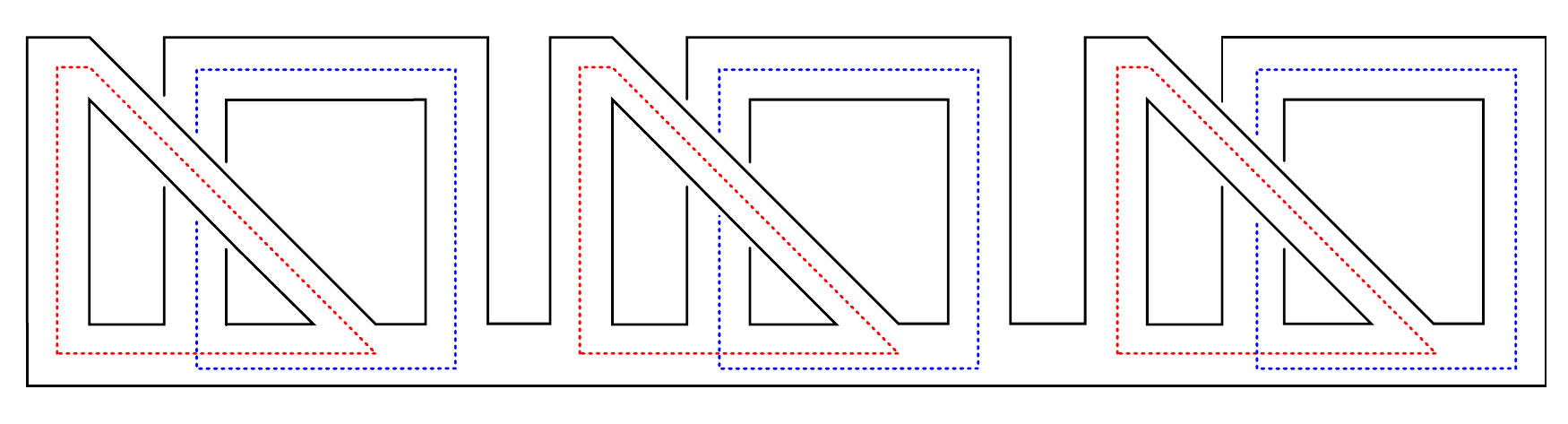}
\put(-5.6,0.5){$\alpha_1$}
\put(-4.6,0.5){$\beta_1$}
\put(-3.6,0.5){$\alpha_2$}
\put(-2.6,0.5){$\beta_2$}
\put(-1.55,0.5){$\alpha_3$}
\put(-0.55,0.5){$\beta_3$}
\put(-5.5,-0){$a_1$}
\put(-4.8,-0){$b_1$}
\put(-3.5,-0){$a_2$}
\put(-2.8,-0){$b_2$}
\put(-1.5,-0){$a_3$}
\put(-0.8,-0){$b_3$}
\caption{Disk-band form of Seifert surface $F$ for the unknot $U$}
\end{figure}
We have one more immediate corollary from Theorem $4.3$ which tells us that when there is a knot which is the connected sum of three genus one algebraically slice knots, it has at least one derivative which has non-zero Milnor's triple linking number.

\begin{corollary} Let $K_1, K_2, K_3$ be algebraically slice knots with genus one Seifert surfaces. Then $K=K_1 \# K_2 \# K_3$ has a derivative $\{ \gamma_1, \gamma_2, \gamma_3 \}$ where $\bar\mu_{\{ \gamma_1, \gamma_2, \gamma_3 \}}(123) \neq 0$.
\end{corollary}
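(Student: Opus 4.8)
The plan is to present $K$ on its genus three Seifert surface as a connected sum and feed the resulting block Seifert matrix into Theorem $4.3$. Each $K_i$ is algebraically slice of genus one, so it has a genus one Seifert surface $F_i$ with a metabolizer $\spn(b_i)$; extending $b_i$ to a symplectic basis $\{a_i,b_i\}$ of $H_1(F_i)$, the Seifert matrix of $K_i$ takes the form $\begin{pmatrix} p_i & m_i\\ m_i-1 & 0\end{pmatrix}$ for integers $p_i,m_i$. Then $F:=F_1\natural F_2\natural F_3$ is a genus three Seifert surface for $K$, and the Seifert matrix with respect to $\{a_1,b_1,a_2,b_2,a_3,b_3\}$ is the block sum of these three $2\times 2$ matrices, since curves lying in different summands are unlinked. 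This is precisely the matrix $M$ of Theorem $4.3$ with $a=m_1$, $b=m_2$, $c=m_3$, $x_1=x_2=y_1=y_2=z_1=z_2=0$, and with $H:=\spn(b_1,b_2,b_3)$ a metabolizer; hence Theorem $4.3$ gives $q_0\cdot\mathbb{Z}\subseteq S_{K,H}$, where $q_0=(m_1-1)(m_2-1)(m_3-1)-m_1m_2m_3$.

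The reduction I would use is that it suffices to find some metabolizer $H$ of $K$ with $S_{K,H}\neq\{0\}$. Indeed $0\in S_{K,H,\{c_1,c_2,c_3\}}$ for any adapted basis (take a derivative and itself), so if $q\in S_{K,H,\{c_1,c_2,c_3\}}$ with $q\neq 0$ there are derivatives $L,L'$ of $K$ associated with $H$ with $\bar\mu_{L}(123)-\bar\mu_{L'}(123)=q\neq 0$, and then at least one of $\bar\mu_{L}(123)$, $\bar\mu_{L'}(123)$ is nonzero. So if $q_0\neq 0$ we are already done.

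For the remaining case $q_0=0$, I would use that $K$ admits further metabolizers coming from the connected sum structure. For each $i$, let $c_i\in H_1(F_i)$ be the primitive class on the line $\{p_ix+(2m_i-1)y=0\}$; a direct calculation shows $\beta_{F_i}(c_i,c_i)=0$, so $\spn(c_i)$ is again a metabolizer of $K_i$, and choosing a symplectic dual $c_i^{\vee}$ of $c_i$ one computes $\beta_{F_i}(c_i^{\vee},c_i)=1-m_i$, so the Seifert block of $K_i$ in the basis $\{c_i^{\vee},c_i\}$ is $\begin{pmatrix}* & 1-m_i\\ -m_i & 0\end{pmatrix}$. Replacing any $b_j$ by $c_j$ therefore gives a metabolizer of $K$ to which Theorem $4.3$ applies; it yields $\big(\prod_i(\widetilde m_i-1)-\prod_i\widetilde m_i\big)\cdot\mathbb{Z}\subseteq S_{K,H}$ for the corresponding $H$, where $\widetilde m_i=m_i$ except at the indices where we switched, at which $\widetilde m_i=1-m_i$.

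The proof then closes with an elementary arithmetic observation: for integers $m_1,m_2,m_3$ one cannot have $\prod_i(m_i-1)-\prod_i m_i=0$ simultaneously with the vanishing of $\prod_i(\widetilde m_i-1)-\prod_i\widetilde m_i$ for all three choices $\widetilde m=(1-m_1,m_2,m_3)$, $(m_1,1-m_2,m_3)$, $(m_1,m_2,1-m_3)$. The first vanishing reads $m_1+m_2+m_3=m_1m_2+m_1m_3+m_2m_3+1$; combined with it, the vanishing for $(1-m_1,m_2,m_3)$ forces $m_2+m_3=1$, and similarly the other two force $m_1+m_3=1$ and $m_1+m_2=1$, whence $2m_1=1$, impossible. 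So one of these four metabolizers $H$ of $K$ has a nonzero element in $S_{K,H}$, and the reduction above produces a derivative of $K$ with nonzero triple linking number. The only computational work is the Seifert-form identity $\beta_{F_i}(c_i^{\vee},c_i)=1-m_i$ together with this arithmetic lemma; the substance of the argument is Theorem $4.3$, and the main obstacle is really just making sure that a single metabolizer with $q\neq 0$ can always be produced, which is exactly what the conjugate-metabolizer trick plus the arithmetic lemma accomplish.
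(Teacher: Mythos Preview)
Your argument is correct and follows the same overall strategy as the paper: apply Theorem~4.3 to the block-diagonal Seifert matrix of $F_1\natural F_2\natural F_3$, observe that each $K_i$ has a second ``conjugate'' metabolizer whose Seifert block has off-diagonal entry $1-m_i$ in place of $m_i$, and use this freedom to force the constant $(a-1)(b-1)(c-1)-abc$ to be nonzero. The reduction ``$S_{K,H}\neq\{0\}\Rightarrow$ some derivative has nonzero $\bar\mu(123)$'' is the same in both.

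The only genuine difference is in how nonvanishing is secured. The paper normalizes in advance: since for every integer $e_i$ one has $|e_i|\neq|e_i-1|$, one may choose the metabolizer of each $K_i$ so that $|e_i|>|e_i-1|$, and then $|e_1e_2e_3|>|(e_1-1)(e_2-1)(e_3-1)|$ makes the difference nonzero in one line. You instead fix the $m_i$ and argue by contradiction that the four quantities obtained from $(m_1,m_2,m_3)$ and the three single swaps $m_j\mapsto 1-m_j$ cannot all vanish; your arithmetic is correct (adding the original and a swapped relation gives $(2m_2-1)(2m_3-1)=-1$, hence $m_2+m_3=1$, and symmetry then yields $2(m_1+m_2+m_3)=3$). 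Both routes work; the paper's preemptive normalization is a bit cleaner, while yours avoids having to say up front which metabolizer to take.
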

\begin{proof}
It is enough to show that there exist a metabolizer $H$ such that $S_{K,H} \supseteq m\cdot\mathbb{Z}$ where $m\neq 0$.

Let $F_1,F_2,$ and $F_3$ be genus one Seifert surfaces for $K_1, K_2,$ and $K_3$ respectively. For $i=1,2,3$, let $M_i=\begin{pmatrix}
  d_i & e_i \\ 
  e_i-1 & 0 
 \end{pmatrix}$ be a Seifert matrix with respect to a symplectic basis $\{a_i, b_i\}$ of $H_1(F_i)$, so that $H_i : = \spn(b_i)$ is a metabolizer for $K_i$. For $i=1,2,3$, let $n_i = gcd(2e_i -1,-d_i), x_i = \frac{2e_i-1}{n_i},$ and $y_i = \frac{-d_i}{n_i}$, then since $gcd(x_i,y_i)=1$, there exist a pair of integers $(z_i,w_i)$ such that $-x_i w_i + z_i y_i = 1$ (i.e. $-(2e_i-1) w_i - d_iz_i = n_i$).
 
 Then for $i=1,2,3$, we have the following calculations : 
 \begin{equation*}
\begin{split}
   \begin{pmatrix}
  z_i & w_i 
 \end{pmatrix} \cdot 
 \begin{pmatrix}
 d_i & e_i \\
 e_i-1 & 0
 \end{pmatrix} \cdot
 \begin{pmatrix}
 x_i \\
 y_i
 \end{pmatrix}
 =& \frac{1}{n_i}\cdot \begin{pmatrix}
  z_i & w_i 
 \end{pmatrix} \cdot 
 \begin{pmatrix}
 d_i(2e_i-1)-e_id_i\\
 (e_i-1)(2e_i-1)
 \end{pmatrix}\\
= & \frac{1}{n_i} \cdot (d_i(2e_i-1)z_i-e_id_iz_i+(e_i-1)(2e_i-1)w_i)\\
= & \frac{1}{n_i} \cdot (-d_iz_i-(2e_i-1) w_i + e_i(d_iz_i+(2e_i-1) w_i))\\
= & \frac{1}{n_i} \cdot (n_i - n_ie_i)\\
= & 1-e_i\\
\begin{pmatrix}
  x_i & y_i 
 \end{pmatrix} \cdot 
 \begin{pmatrix}
 d_i & e_i \\
 e_i-1 & 0
 \end{pmatrix} \cdot
 \begin{pmatrix}
 z_i \\
 w_i
 \end{pmatrix}
 =& \frac{1}{n_i}\cdot \begin{pmatrix}
  2e_i-1 & -d_i 
 \end{pmatrix} \cdot 
 \begin{pmatrix}
 d_iz_i+e_iw_i\\
 (e_i-1)z_i
 \end{pmatrix}\\
= & \frac{1}{n_i} \cdot (d_iz_i (2e_i -1) + e_iw_i  (2e_i-1)  - (e_i-1) z_i d_i)\\
= & \frac{1}{n_i} \cdot (e_i (d_iz_i + w_i  (2e_i -1))\\
= & \frac{1}{n_i} \cdot (-n_ie_i)\\
= & -e_i\\
\begin{pmatrix}
  x_i & y_i 
 \end{pmatrix} \cdot 
 \begin{pmatrix}
 d_i & e_i \\
 e_i-1 & 0
 \end{pmatrix} \cdot
 \begin{pmatrix}
 x_i \\
 x_i
 \end{pmatrix}
 =& \frac{1}{{n_i}^2}\cdot \begin{pmatrix}
  2e_i-1 & -d_i 
 \end{pmatrix} \cdot 
 \begin{pmatrix}
 d_i(2e_i-1)-e_id_i\\
 (e_i-1)(2e_i-1)
 \end{pmatrix}\\
= & \frac{1}{{n_i}^2} \cdot (d_i  (2e_i-1)^2 - e_i  (2e_i -1) d_i - d_i(e_i-1)(2e_i-1))\\
= & \frac{d}{n_i} \cdot (4e_i^2-4e_i+1-2e_i^2+e_i-2e_i^2+3e_i-1)\\
= & \frac{1}{n_i} \cdot (0)\\
= & 0.
\end{split}
\end{equation*}
Therefore, for $i=1,2,3$, $\widetilde{H_i}:=\spn(x_ia_i+y_ib_i)$ is also a metabolizer for $K_i$ and we have a Seifert matrix $\widetilde{M_i} =\begin{pmatrix}
  * & -e_i +1 \\ 
  -e_i & 0 
 \end{pmatrix}$ for $K_i$ with respect to a symplectic basis $\{z_ia_i+w_ib_i, x_ia_i+y_ib_i\}$. Hence we can change basis so that the Seifert matrix takes value of either $\begin{pmatrix}
  * & e_i \\ 
  e_i-1 & 0 
 \end{pmatrix}$ or $\begin{pmatrix}
  * & -e_i +1 \\ 
  -e_i & 0 
 \end{pmatrix}$ for $i=1,2,3$. Therefore we can assume $|e_i| > |e_i-1|$ for each $i=1,2,3$, which implies that $|e_1|\cdot|e_2|\cdot|e_3| > |e_1-1|\cdot|e_2-1|\cdot|e_3-1|$.
 
 Now, let $F=F_1\natural F_2 \natural F_3$ be a Seifert surface for $K$ and it has Seifert matrix $$M =
 \begin{pmatrix}
  * & e_1 & 0 & 0 & 0 & 0 \\
  e_1-1 & 0 & 0 & 0 & 0 & 0 \\
  0 & 0 & * & e_2 & 0 & 0 \\
  0 & 0 & e_2-1 & 0 & 0 & 0 \\
  0 & 0 & 0 & 0 & * & e_3 \\
  0 & 0 & 0 & 0 & e_3-1 & 0
 \end{pmatrix}.$$ Then by Theorem $4.3$ we have $S_{K,H} \supseteq ((e_1-1)(e_2-1)(e_3-1)-e_1e_2e_3)\cdot\mathbb{Z}$ where $((e_1-1)(e_2-1)(e_3-1)-e_1e_2e_3)\neq 0$ as desired.
\end{proof}

We will end this paper with few remarks.

\begin{remark}$ $
\begin{enumerate}
\item Notice that Corollary $4.5$ is a special case of Corollary $4.6$, where $K_1,K_2,K_3$ are unknots.
\item For the proof of corollary $4.6$ we only needed the assumption that $K$ has the same Seifert matrix as the connected sum of three genus one algebraically slice knots.
\end{enumerate}
\end{remark}


\bibliographystyle{alpha}
\bibliography{knotbib}

\begin{figure}[p]
\centering
\includegraphics[width=5.5in]{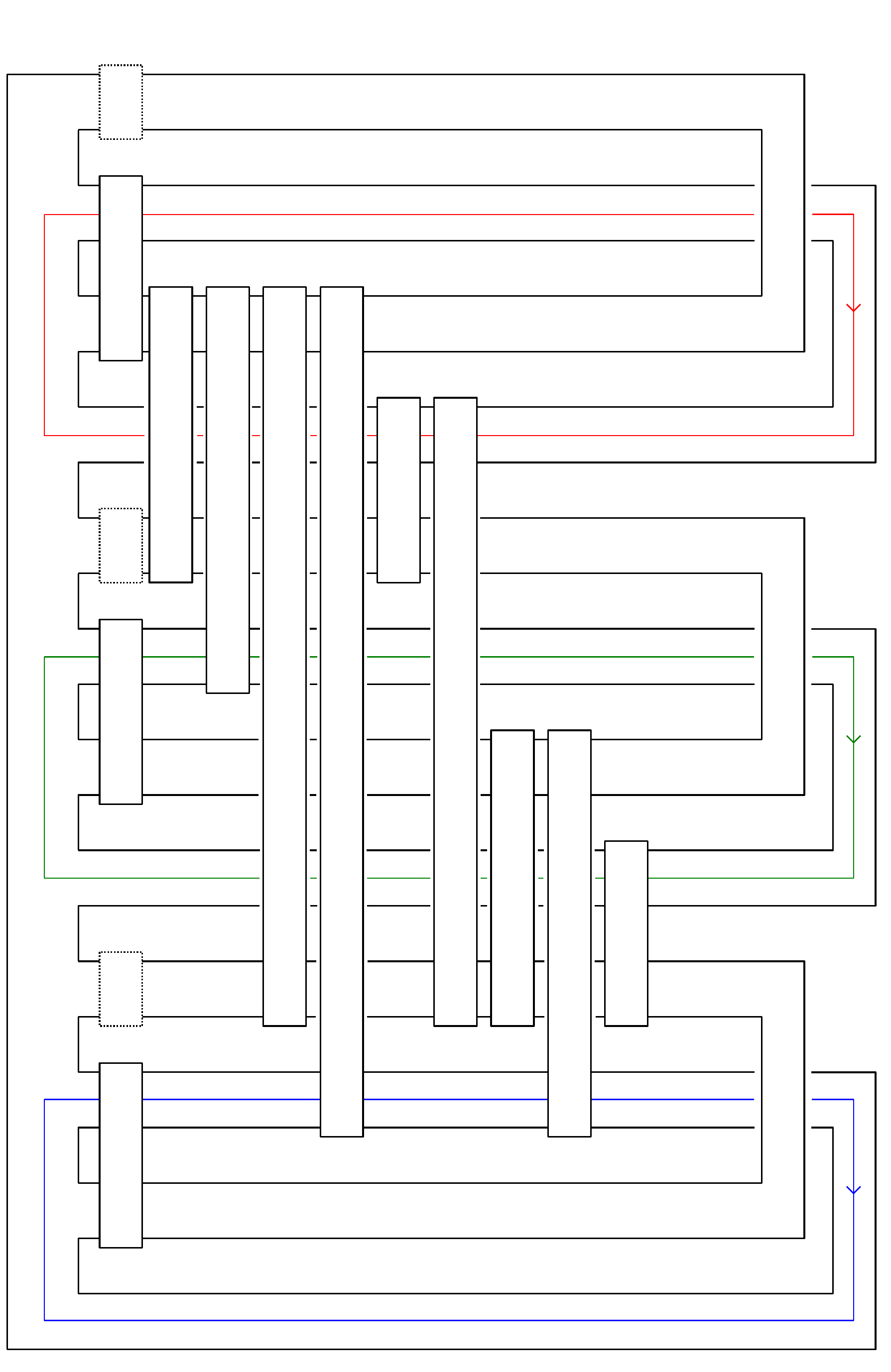}
\put(-4.8,7.95){\rotatebox{-90}{$m_{11}$}}
\put(-4.8,7){\rotatebox{-90}{$a-1$}}
\put(-4.8,5.23){\rotatebox{-90}{$m_{33}$}}
\put(-4.8,4.25){\rotatebox{-90}{$b-1$}}
\put(-4.8,2.48){\rotatebox{-90}{$m_{55}$}}
\put(-4.8,1.50){\rotatebox{-90}{$c-1$}}
\put(-4.5,6.4){\rotatebox{-90}{$m_{13}$}}
\put(-4.15,6.4){\rotatebox{-90}{$x_1$}}
\put(-3.8,6.4){\rotatebox{-90}{$m_{15}$}}
\put(-3.45,6.4){\rotatebox{-90}{$y_1$}}
\put(-3.1,5.6){\rotatebox{-90}{$x_2$}}
\put(-2.75,5.6){\rotatebox{-90}{$y_2$}}
\put(-2.4,3.7){\rotatebox{-90}{$m_{35}$}}
\put(-2.05,3.7){\rotatebox{-90}{$z_1$}}
\put(-1.68,2.8){\rotatebox{-90}{$z_2$}}
\put(-5.4,6.5){\rotatebox{-90}{$\gamma_1$}}
\put(-5.4,3.8){\rotatebox{-90}{$\gamma_2$}}
\put(-5.4,1.1){\rotatebox{-90}{$\gamma_3$}}

\caption{Knot $K$ with the simplest Seifert surface and the derivative $\{\gamma_1, \gamma_2, \gamma_3 \}$ associated with $H$. (Dotted box represents full twists and solid box represents full twists between two bands with no twist on each bands.)}
\end{figure}

\begin{figure}[p]
\centering
\includegraphics[width=5.5in]{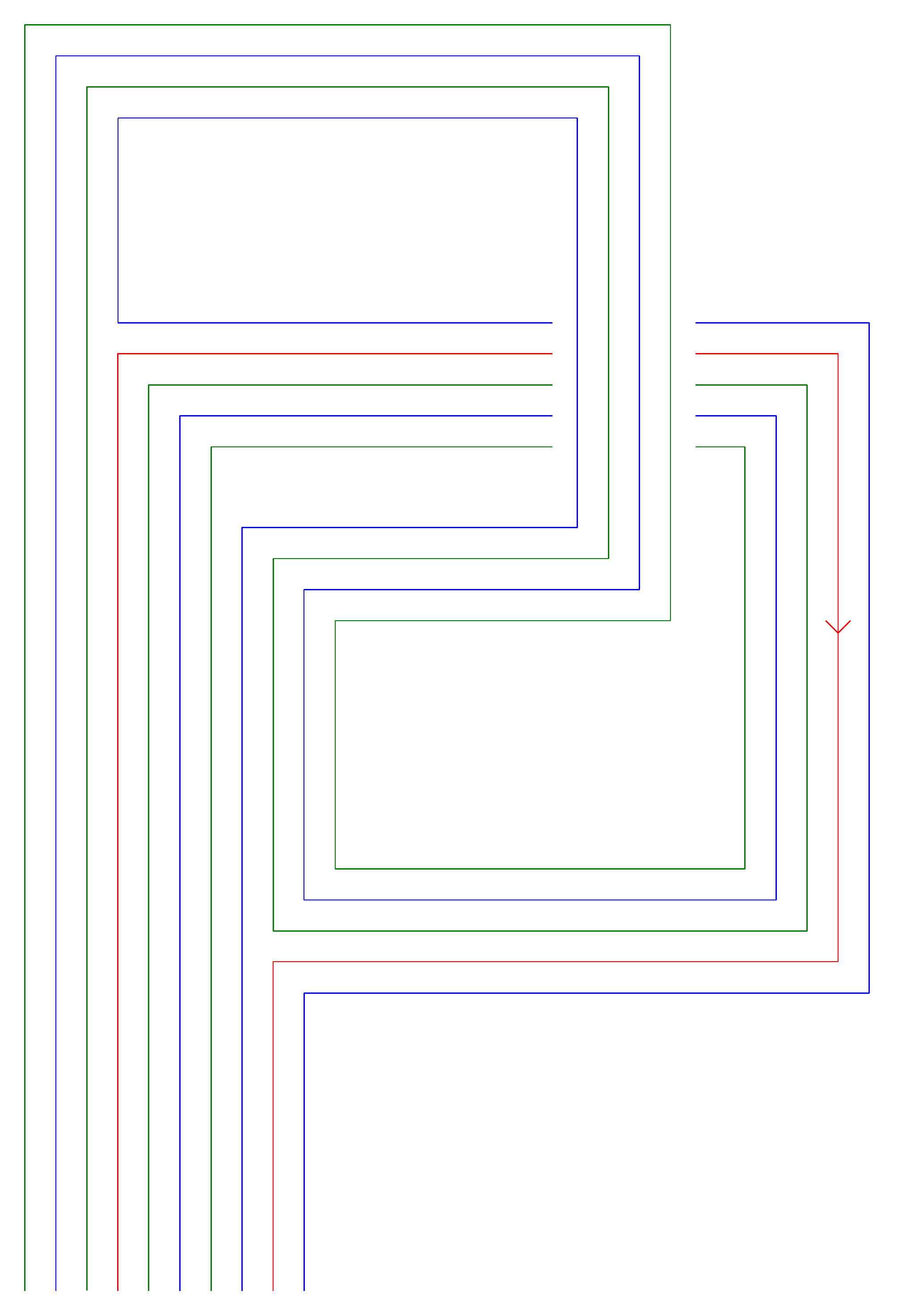}
\put(-0.47,4.5){\rotatebox{-90}{$\gamma_{1,1}$}}
\caption{Link $L_1 = \{ \gamma_{1,1}, \gamma_{1,2}, \gamma_{1,3} \}$ embedded in $\widetilde{\Sigma_1}$. (Obvious $\widetilde{\Sigma_1}$ is omitted for simplicity.)}
\end{figure}

\begin{figure}[p]
\centering
\includegraphics[width=5.5in]{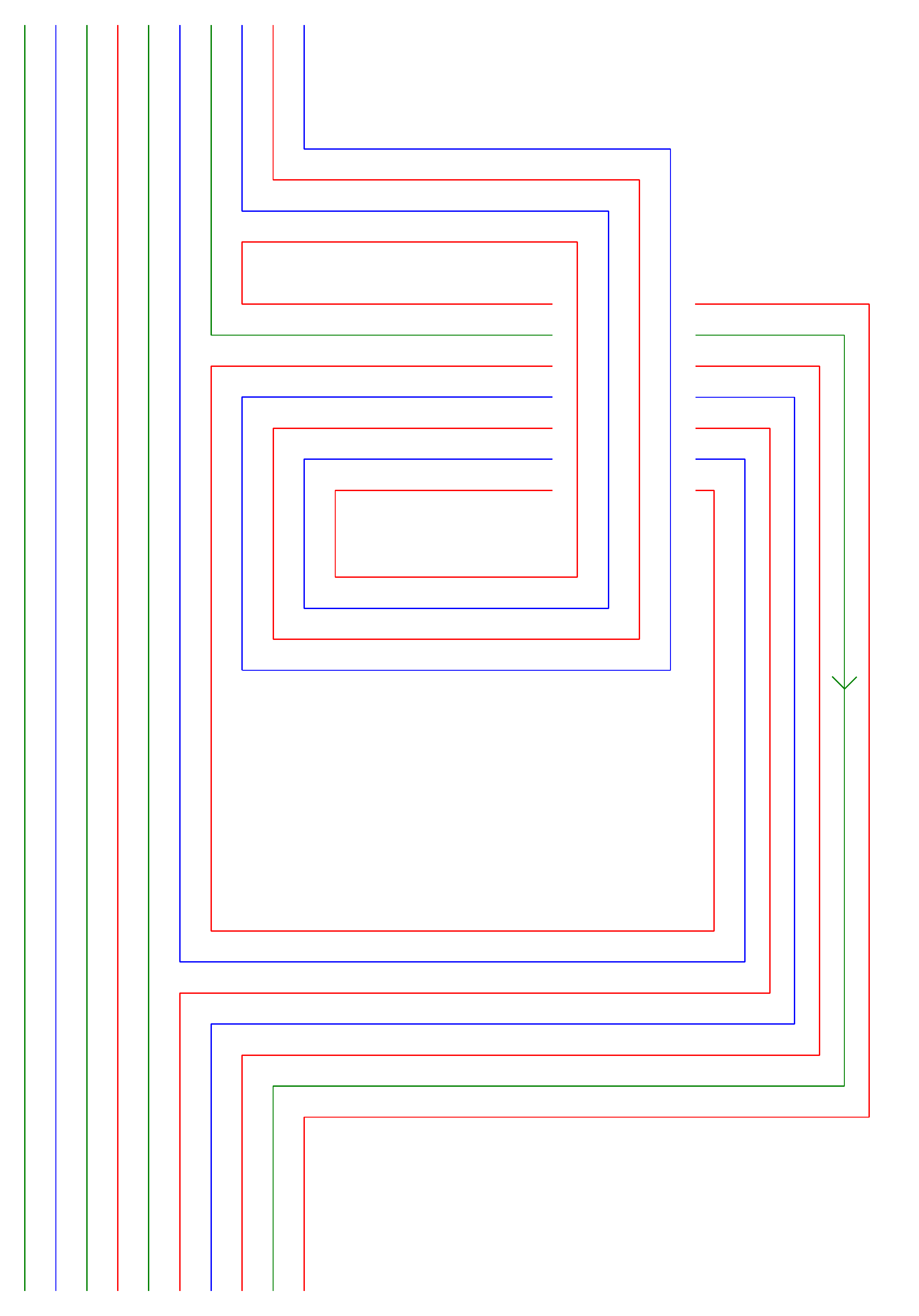}
\put(-0.44,4.1){\rotatebox{-90}{$\gamma_{1,2}$}}
\caption{Link $L_1 = \{ \gamma_{1,1}, \gamma_{1,2}, \gamma_{1,3} \}$ embedded in $\widetilde{\Sigma_1}$. (Obvious $\widetilde{\Sigma_1}$ is omitted for simplicity.)}
\end{figure}

\begin{figure}[p]
\centering
\includegraphics[width=5.5in]{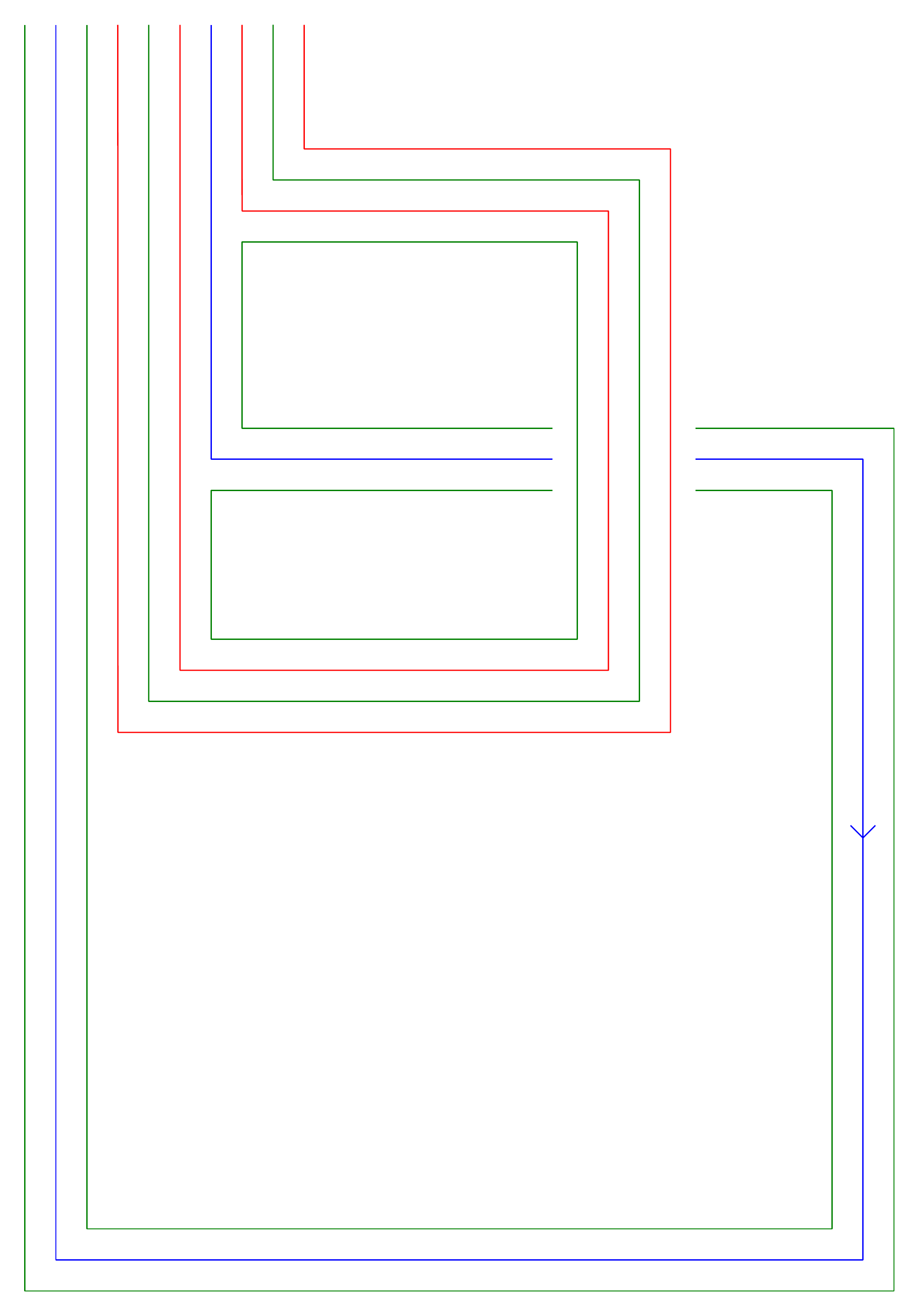}
\put(-0.32,3.3){\rotatebox{-90}{$\gamma_{1,3}$}}
\caption{Link $L_1 = \{ \gamma_{1,1}, \gamma_{1,2}, \gamma_{1,3} \}$ embedded in $\widetilde{\Sigma_1}$. (Obvious $\widetilde{\Sigma_1}$ is omitted for simplicity.)}
\end{figure}

\begin{figure}[p]
\centering
\includegraphics[width=5.5in]{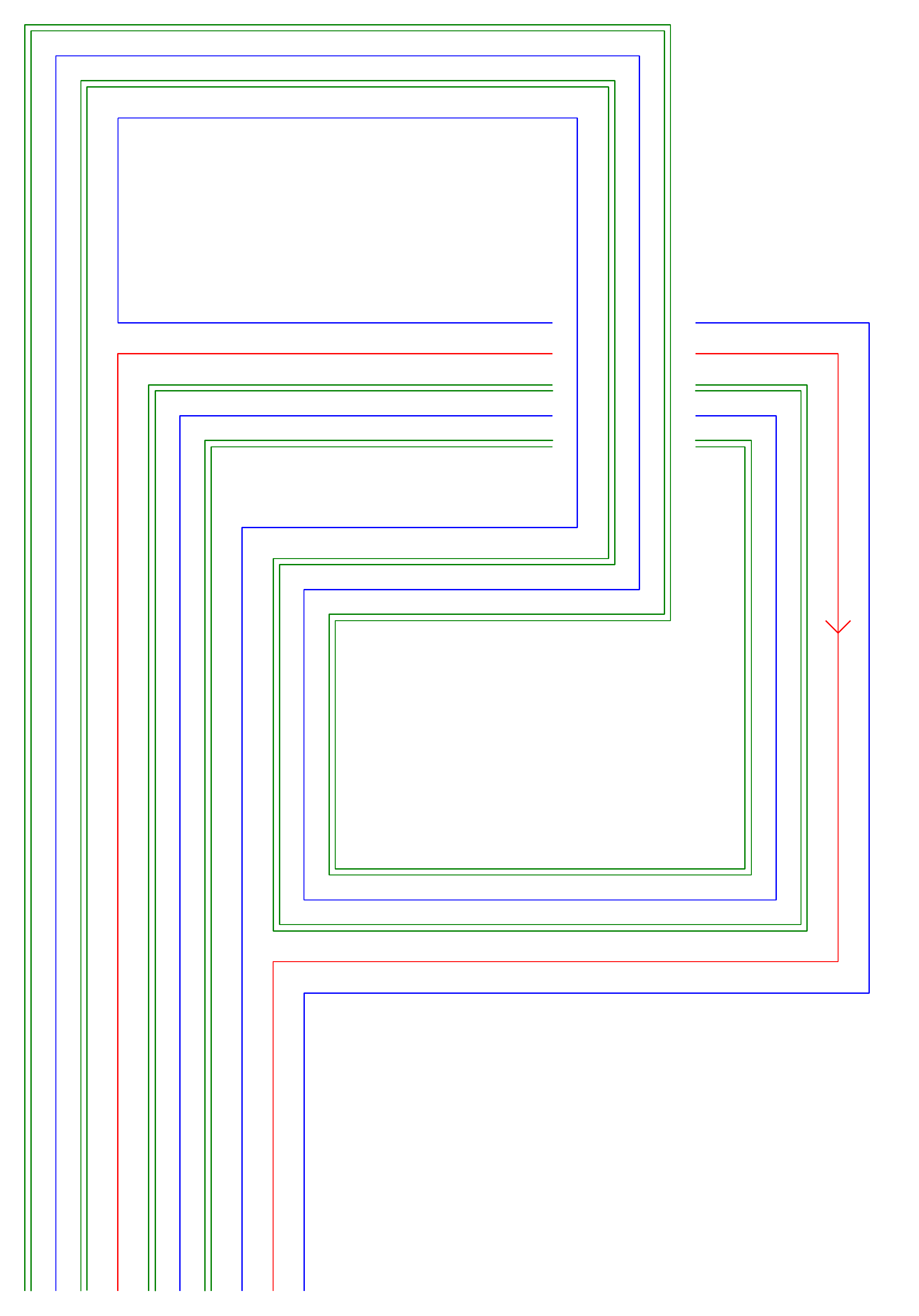}
\put(-0.47,4.5){\rotatebox{-90}{$\gamma_{2,1}$}}
\caption{Link $L_2 = \{ \gamma_{2,1}, \gamma_{2,2}, \gamma_{2,3} \}$ embedded in $\widetilde{\Sigma_2}$. (Obvious $\widetilde{\Sigma_2}$ is omitted for simplicity.)}
\end{figure}

\begin{figure}[p]
\centering
\includegraphics[width=5.5in]{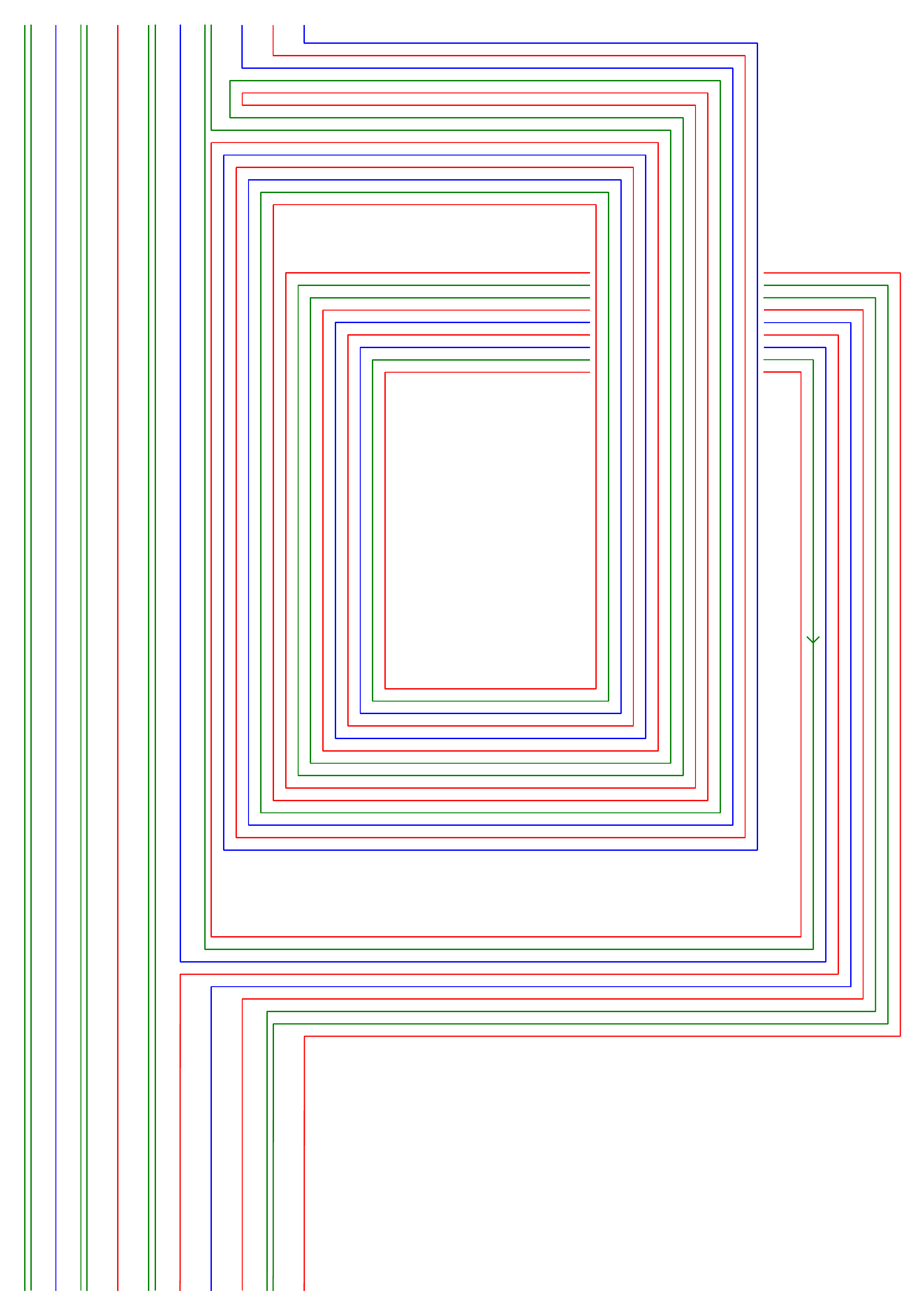}
\put(-0.85,4.2){\rotatebox{-90}{$\gamma_{2,2}$}}
\caption{Link $L_2 = \{ \gamma_{2,1}, \gamma_{2,2}, \gamma_{2,3} \}$ embedded in $\widetilde{\Sigma_2}$. (Obvious $\widetilde{\Sigma_2}$ is omitted for simplicity.)}
\end{figure}

\begin{figure}[p]
\centering
\includegraphics[width=5.5in]{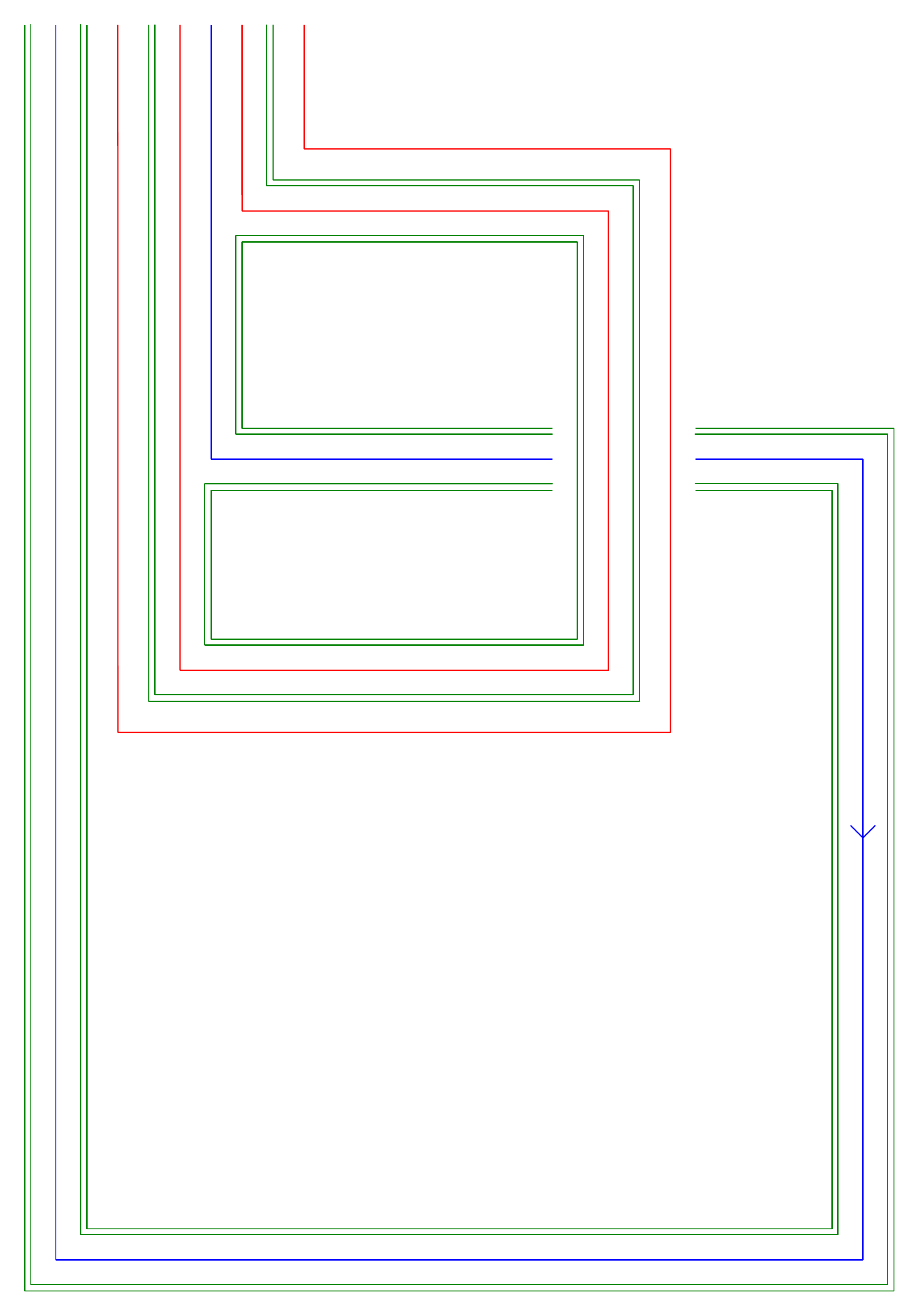}
\put(-0.32,3.3){\rotatebox{-90}{$\gamma_{2,3}$}}
\caption{Link $L_2 = \{ \gamma_{2,1}, \gamma_{2,2}, \gamma_{2,3} \}$ embedded in $\widetilde{\Sigma_2}$. (Obvious $\widetilde{\Sigma_2}$ is omitted for simplicity.)}
\end{figure}

\end{document}